\newtheorem{theorem}{Theorem}[section]
\newtheorem{lemma}[theorem]{Lemma}
\theoremstyle{definition}
\newtheorem{definition}[theorem]{Definition}
\newtheorem{corollary}[theorem]{Corollary}
\theoremstyle{remark}
\newtheorem{remark}[theorem]{Remark}
\numberwithin{equation}{section}
\author{Amit Einav}
\title{A Few Ways to Destroy Entropic Chaoticity on Kac's Sphere.}
\thanks{The author was supported by ERC grant MATKIT}
\begin{document}

\maketitle

\begin{abstract}
In this work we discuss a few ways to create chaotic families that are not entropically chaotic on Kac's Sphere. We present two types of examples: limiting convex combination of an entropically chaotic family with a particularly 'bad' non-entropic family, and two explicitly computable families that vary rapidly with $N$, causing loss of support on the sphere or high entropic tails. 
\end{abstract}

\section{Introduction}\label{sec: introduction}
In his 1956 paper, \cite{Kac}, Kac introduced the concept of chaotic families (or 'The Boltzmann property' in his words) as a condition on the initial data to the solution of his many-particle, binary collision, stochastic process, from which a caricature of Boltzmann's equation arises. Motivated by Boltzmann's 'Stosszahlansatz' assumption, stating that pre-collision particles can be considered to be independent, Kac defined the chaoticity of a family $\left\lbrace F_N \right\rbrace_{N\in\mathbb{N}}$ of probability densities on the sphere $\mathbb{S}^{N-1}\left(\sqrt{N} \right)$ as:
\begin{definition}\label{def: chaoticity}
A sequence of symmetric probability densities, $\left\lbrace F_N \right\rbrace_{N\in\mathbb{N}}$, on the sphere $\mathbb{S}^{N-1}\left(\sqrt{N} \right)$ is said to be $f-$chaotic if there exists a probablity density, $f$, such that 
\begin{equation}\label{eq: chaoticity}
\lim_{N\rightarrow\infty}\Pi_k(F_N)(v_1,\dots,v_k)=f^{\otimes k}(v_1,\dots,v_k)
\end{equation}
for every $k\in\mathbb{N}$, where $\Pi_k(F_N)$ is the $k-$th marginal of $F_N$ and the limit is taken in the weak topology induced by bounded continuous functions on $\mathbb{R}^k$.
\end{definition} 
In what follows we will use the term 'Kac's sphere' (or 'the sphere' when context permits) for $\mathbb{S}^{N-1}\left(\sqrt{N} \right)$. The fact that we deal with a sphere of radius $\sqrt{N}$ is crucial to the process, and quite intuitive. Indeed, if we're talking about a process involving $N$ particles with one dimensional velocities, each indistinguishable from the other, then assuming that a particle (and thus every particle) has a unit of energy leads to the conclusion that the total energy of the system is $N$ units. By conservation of energy, which Kac's model satisfies, the whole system must be restricted to the sphere.\\
Definition (\ref{def: chaoticity}) can easily be extended to general measures on the sphere. Indeed, we only need to define what it means to be symmetric. 
\begin{definition}\label{def: symmetry of measures}
A measure $\mu_N$ on Kac's sphere is called symmetric if for any measurable function $F_N$ and for any permutation $\tau\in S_N$ we have that
\begin{equation}\label{eq: symmetry of measures}
\int_{\mathbb{S}^{N-1}\left( \sqrt{N} \right)}F_N(v_1,\dots,v_N)d\mu_N=\int_{\mathbb{S}^{N-1}\left( \sqrt{N} \right)}F_N\left(v_{\tau(1)},\dots,v_{\tau(N)}\right)d\mu_N.
\end{equation}
\end{definition}
Kac considered a model in which $N$ indistinguishable particles, with one dimensional velocities, underwent random binary collisions. His evolution equation for the probability density of the velocities of the particles was given by
\begin{equation}\label{eq: master equation}
\frac{\partial F_N}{\partial t}(v_1,\dots ,v_N)=-N(I-Q) F_N(v_1,\dots ,v_N),
\end{equation}
where
\begin{equation}\label{eq: def of Q}
\begin{gathered}
QF\left(v_1,\dots,v_N \right)=\frac{1}{2\pi}\cdot \frac{2}{N(N-1)}\cdot \\
\sum_{i<j}\int_{0}^{2\pi}F\left(v_1,\dots,v_i(\vartheta),\dots,v_j(\vartheta),\dots,v_N\right)d\vartheta ,
\end{gathered}
\end{equation}
with 
\begin{equation}\label{eq: v_i(theta),v_j(theta)}
\begin{gathered}
v_i(\vartheta)=v_i\cos(\vartheta)+v_j\sin(\vartheta), \\
v_j(\vartheta)=-v_i\sin(\vartheta)+v_j\cos(\vartheta).
\end{gathered}
\end{equation}
Kac managed to show that chaoticity is the right ingredient to derive Boltzmann's equation from his linear $N-$particle model. He managed to show that (\ref{eq: chaoticity}) \emph{propagates in time} under his evolution equation, and that the evolution equation for the limit probability density, $f(v,t)$, satisfies a caricature of Boltzmann's equation. Kac expressed hope that investigating his $N-$particle linear model would lead to new results on the Boltzmann's equation, particularly in the area of trend to equilibrium. Indeed, It is easy to see that $Q$ is bounded and self adjoint on Kac's sphere as well as $Q<I$. The ergodicity of (\ref{eq: master equation}) leads to the fact that for every fixed $N$ we have that $\lim_{t\rightarrow\infty}F_N(v_1,\dots,v_N,t)=1$. Defining the spectral gap
\begin{equation}\label{eq: spectral gap}
\Delta_N=\inf \left\lbrace \frac{\left\langle F_N,N(I-Q) F_N \right\rangle}{\left\lVert F_N \right\rVert_{L^2\left(\mathbb{S}^{N-1}\left(\sqrt{N} \right) \right)}} \quad F_N\perp 1 \right\rbrace,
\end{equation}
one can show that if $F_N(t)=F_N(v_1,\dots,v_N,t)$ solves (\ref{eq: master equation}) then:
\begin{equation}\label{eq: spectral gap relaxation}
\left\lVert F_N(t)-1 \right\rVert_{L^2\left(\mathbb{S}^{N-1}\left(\sqrt{N} \right) \right)}\leq e^{-\Delta_N t}
\left\lVert F_N(0)-1 \right\rVert_{L^2\left(\mathbb{S}^{N-1}\left(\sqrt{N} \right) \right)}.
\end{equation}
Kac conjectured that $\liminf_{N\rightarrow\infty}\Delta_N>0$ and hoped that it will lead to an exponential rate of decay for Boltzmann's equation as a limit equation of his linear model. While the conjecture was proven to be true (see \cite{CCL, CGL, Jan, Maslen}) the choice of $L^2$ as a reference distance is catastrophic when considering chaotic families. Intuitively speaking, one would suspect that chaoticity means (in some sense) that $F_N\approx f^{\otimes N}$. As such, we will have that the $L^2$ norm of $F_N$ will be exponentially large. Indeed, one can easily construct a chaotic family $F_N(0)$ with $\left\lVert F_N(0) \right\rVert_{L^2 \left(\mathbb{S}^{N-1}\left(\sqrt{N} \right) \right)}\geq C^N$, where $C>1$, leading to a relaxation time that is proportional to $N$.\\
A different approach, one more amiable to chaoticity, was needed. A natural quantity to investigate, one that was investigated by Boltzmann himself in his famous $H-$theorem, is the entropy. In Kac's context the entropy is defined as 
\begin{equation}\label{eq: entropy}
H_N(F_N)=\int_{\mathbb{S}^{N-1}\left(\sqrt{N} \right)} F_N \log F_N d\sigma^N,
\end{equation}
where $d\sigma^N$ is the uniform probability measure on the sphere. This is a particular case of the \emph{relative entropy} between two probability measures, defined as:
\begin{definition}\label{def: rel entropy}
Given two probability measure, $\mu$ and $\nu$, we define the relative entropy
\begin{equation}\label{eq: rel entropy}
H(\mu | \nu)=\int f\log f d\nu,
\end{equation}
where $f=\frac{d\mu}{d\nu}$, when $\mu \ll \nu$ and $H(\mu | \nu)=\infty$ otherwise. 
\end{definition}
The relative entropy has some useful properties. In our context, the most important one is the Csiszar-Kullback-Leibler-Pinsker inequality:
\begin{equation}\label{eq: Kullback inequality}
\left\lVert \mu-\nu \right\rVert^2_{TV}\leq 2H(\mu | \nu),
\end{equation}
giving us a way to measure distance between measures (and in particular between probability densities). Notice that much like the log-Sobolev inequality, the constant appearing in (\ref{eq: Kullback inequality}) is \emph{independent of the dimension}, giving us a way to uniformly control the distance!\\
By definition $H_N(F_N)=H\left(F_N d\sigma^N | d\sigma^N \right)$, and as such 
\begin{equation}\label{eq: distance with rel entropy}
\int \left\lvert F_N-1\right\rvert d\sigma^N \leq \sqrt{2H_N(F_N)},
\end{equation} 
so the entropy can serve as a tool to measure convergence in Kac's context.\\
Another very appealing property of the entropy is its \emph{extensivity}. Due to the properties of the logarithm one can hope that if $F_N$ is $f-$chaotic then, in some way,
\begin{equation}\label{eq: entropic motivation}
H_N(F_N)\approx N\cdot H(f | \gamma),
\end{equation}
where $\gamma(x)=\frac{e^{-\frac{x^2}{2}}}{\sqrt{2\pi}}$ is the standard Gaussian (the appearance of the Gaussian shouldn't be too surprising - it is a known fact that the uniform measure on Kac's sphere is $\gamma-$chaotic!).\\
At this point one can define a 'spectral gap' for the entropy, and see if it yields better results than the linear theory. Assuming that $F_N$ is a symmetric probability density that solves (\ref{eq: master equation}) one can define 
\begin{equation}\label{eq EEP-ratio}
\begin{gathered}
\Gamma_N=\inf \left\lbrace \frac{\left\langle N(I-Q) F_N,\log F_N\right\rangle}{H_N(F_N)} \right\rbrace,
\end{gathered}
\end{equation}
and conclude that 
\begin{equation}\label{eq: EEP-ratio distance}
H_N(F_N(t))\leq e^{-\Gamma_N t}H_N(F_N(0)).
\end{equation}
If $\Gamma_N>C>0$ for all $N$ we can combine (\ref{eq: EEP-ratio distance}) with (\ref{eq: Kullback inequality}) and (\ref{eq: entropic motivation}) and get relaxation time that is proportional to $\log N$, which is a fantastic result. The conjecture of the existence of such constant is called 'The many-particle Cercignani's Conjecture', following a similar conjecture for Boltzmann's equation (see \cite{Cerc}) trying to find a constant $C>0$ such that
\begin{equation}\label{eq: Cerc conj}
-\frac{d}{dt}H(f(t)) \geq C H(f(t)),
\end{equation}
where $f(t)$ is the solution to Boltzmann's equation. Unfortunately, if we impose no restrictions on the probability densities the conjecture is not true and in fact $\Gamma_N\approx \frac{1}{N}$, putting us in the same place as the linear spectral gap (see \cite{Villani, Einav1, Einav2}). This obviously leads to many very interesting questions about possibilities of the conjecture being true under plausible restrictions on $F_N$.\\
While Kac's model is a big step forwards in Kinetic Theory, it had some flaws. The model was one dimensional, and as such couldn't conserve energy and momentum at the same time. Another problem with the model was the simplistic collision kernel and the inability to deal with physical kernels, depending on the velocities of the particles. In 1967 McKean extended the model to the case where the velocities were $d-$dimensional, with $d>1$, and showed that, similar to the original model, the real Boltzmann equation arises from it in an extended array of collisional kernels (see \cite{McKean}), though the restriction that the kernel would be independent of the velocities was still imposed, leaving the interesting cases of Hard Spheres and True Maxwellian Molecules unsolved.\\
In a remarkable recent paper, \cite{MM}, Mischler and Mouhot introduced a new abstract method that allowed them to tackle many unsolved questions in the subject, including the velocity dependent cases mentioned above. They managed to show quantitative and uniform in time propagation of chaos in weak measure distance, propagation of entropic chaos (soon to be defined) and quantitative estimation on relaxation rates that are \emph{independent of the number of particles}. There is more to be said and explored in the subject, but their work is a huge leap forward in the desired direction.\\
At this point we will leave Kac's models and program aside, and concentrate on the problem we wish to deal with. More information about the topic and the related spectral gap problem and entropy-entropy production ratio can be found in \cite{CCL, CCL1, CCRLV, CGL} and the excellent \cite{VReview, MM}. \\
We start by defining the concept of entropic chaoticity.
Motivated by (\ref{eq: entropic motivation}) we introduce the following, more general, definition:
\begin{definition}\label{eq: entropic chaoticity}
A family of symmetric probability measures, $\left\lbrace \mu_N \right\rbrace_{N\in\mathbb{N}}$, on Kac's sphere is said to be entropically chaotic if it is $\mu-$chaotic and 
\begin{equation}\label{eq: entropic chaoticity definition}
\lim_{N\rightarrow\infty}\frac{H(\mu_N | d\sigma^N)}{N}=H(\mu|\gamma).
\end{equation}
\end{definition}
The above definition was introduced by Carlen, Carvalho, Le Roux, Loss and Villani in \cite{CCRLV}. The authors noted that the concept of entropic chaoticity is stronger than that of mere chaoticity as it involves \emph{all} of the variables, and not just a finite amount of them. We refer the reader to \cite{CCRLV} for more interesting details, and beautiful results, about entropic chaoticity. The case where $H(\mu | \gamma)=\infty$ is somewhat of a pathological case and so in the following we will only talk about cases where $H(\mu | \gamma)$ is finite. \\
It is worth noting that in his original paper (\cite{Kac}) Kac was aware of the extensivity property of the entropy, and while he didn't define entropic chaoticity, he figured it will play an important role in his model (he thought that it will help establish a satisfactory derivation of Boltzmann's $H-$theorem). \\
In our paper, we will be solely interested in the 'functional' case where $\mu_N=F_N d\sigma^N$ and $\mu=f(x)dx$.\\ 
At this point one might ask oneself - Are there any chaotic and/or entropically chaotic families? A partial solution to this question was already given by Kac in \cite{Kac}: He noted that probability densities of the form 
\begin{equation}\label{eq: usual suspect}
F_N(v_1,\dots,v_N)=\frac{\prod_{i=1}^N f(v_i)}{\int_{\mathbb{S}^{N-1}\left(\sqrt{N} \right)} \prod_{i=1}^N f(v_i) d\sigma^N}
\end{equation}
are $f-$chaotic under some severe conditions on $f$ (very strong integrability conditions). 
Note that this type of family seems very reasonable - intuitively speaking it is an independent family on the entire space which is being restricted to the sphere, causing some (hopefully small in the limit) correlations to appear.\\
In \cite{CCRLV} the authors have managed to significantly extend Kac's result:
\begin{theorem}\label{thm: usual suspects are good}
Let $f$ be a probability density on $\mathbb{R}$ such that $f\in L^p(\mathbb{R})$ for some $p>1$, $\int_\mathbb{R} x^2 f(x)=1$ and $\int_\mathbb{R} x^4 f(x)dx<\infty$. Then the family of densities defined in (\ref{eq: usual suspect}) is $f-$chaotic. Moreover, it is $f-$entropically chaotic.
\end{theorem}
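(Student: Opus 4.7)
The plan is to follow the strategy of \cite{CCRLV} and reduce both statements to a uniform local central limit theorem (LCLT) for the i.i.d.\ sum $S_N^f:=X_1^2+\cdots+X_N^2$, where $X_1,\dots,X_N$ have density $f$. Under the hypotheses we have $\mathbb{E}[X_i^2]=1$ and $\sigma^2:=\mathrm{Var}(X_i^2)=\int x^4 f-1<\infty$, so the classical CLT applies; the role of the assumption $f\in L^p$ is to give enough regularity on the density of $X_1^2$ that, after finitely many convolutions, its characteristic function lies in $L^1$. Fourier inversion then upgrades the CLT to a uniform LCLT for the density $p_N^f$ of $S_N^f$ of the form
\begin{equation*}
\sup_{s\in\mathbb{R}}\Bigl|\sqrt{N}\,p_N^f(s)-\tfrac{1}{\sigma\sqrt{2\pi}}\,e^{-(s-N)^2/(2N\sigma^2)}\Bigr|\xrightarrow[N\to\infty]{}0.
\end{equation*}
This LCLT is the engine of everything that follows.

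A direct coarea/disintegration computation yields two clean identities: the normalization reads $Z_N=2\sqrt{N}\,p_N^f(N)/|\mathbb{S}^{N-1}(\sqrt{N})|$, and---after telescoping all the geometric Jacobian and $\Gamma$-factors---the marginals simplify to
\begin{equation*}
\Pi_k(F_N)(v_1,\dots,v_k)=\prod_{i=1}^k f(v_i)\cdot \frac{p_{N-k}^f\!\left(N-\sum_{i=1}^k v_i^2\right)}{p_N^f(N)}\,\mathbf{1}_{\{\sum v_i^2<N\}}.
\end{equation*}
For fixed $k$ and $v_1,\dots,v_k$ in a compact set, the shift $\sum v_i^2$ is $O(1)$ on the natural scale $\sqrt{N}$, so the LCLT forces the density ratio to $1$ uniformly on compacts. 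Pointwise convergence $\Pi_k(F_N)\to f^{\otimes k}$ combined with Scheff\'e's lemma (both sides are probability densities on $\mathbb{R}^k$) upgrades this to $L^1$ and hence to weak convergence in the sense of Definition~\ref{def: chaoticity}, proving $f$-chaoticity.

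For entropic chaoticity, the starting point is the identity
\begin{equation*}
\frac{H_N(F_N)}{N}=\int_{\mathbb{R}} \log f(v_1)\,\Pi_1(F_N)(v_1)\,dv_1 -\frac{\log Z_N}{N},
\end{equation*}
obtained by integrating $\log F_N=\sum_i \log f(v_i)-\log Z_N$ against $F_N d\sigma^N$ and using symmetry. Substituting the formula for $Z_N$, applying Stirling's approximation to $|\mathbb{S}^{N-1}(\sqrt{N})|$, and using $\log p_N^f(N)=O(\log N)$ from the LCLT, one computes $\log Z_N/N\to -\tfrac{1}{2}\log(2\pi e)$. Combined with the elementary identity $H(f\,|\,\gamma)=\int f\log f +\tfrac{1}{2}\log(2\pi e)$ (which uses $\int x^2 f=1$), the whole statement reduces to showing
\begin{equation*}
\int \log f(v_1)\,\Pi_1(F_N)(v_1)\,dv_1\longrightarrow \int f\log f\,dv_1.
\end{equation*}

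This last convergence is the main obstacle, since $\log f$ is unbounded both near the zeros of $f$ and at infinity, so weak convergence of $\Pi_1(F_N)$ is not enough. My plan is to split the integral into a bulk $v_1^2\le N/2$ and a tail $v_1^2>N/2$. On the bulk, the pointwise LCLT produces a uniform bound $p_{N-1}^f(N-v_1^2)/p_N^f(N)\le C$, so $\Pi_1(F_N)(v_1)\le C\,f(v_1)$ uniformly in $N$, and dominated convergence applies because $|\log f|\cdot f\in L^1(\mathbb{R})$---a fact that follows from the $L^p$ hypothesis (which controls the positive part via $f(\log f)_+\le f^{p}/(p-1)$) together with the finiteness of $H(f\,|\,\gamma)$ and the second moment. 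On the tail, a moderate-deviations estimate on $p_{N-1}^f(N-v_1^2)$---powered again by the LCLT and the finite fourth moment---gives Gaussian-type decay that kills the tail contribution. Combining the two estimates yields the desired convergence, and hence entropic chaoticity.
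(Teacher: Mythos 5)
The paper does not give a proof of this theorem; it is quoted as background from \cite{CCRLV}. Your sketch reproduces the proof strategy of that reference: a uniform local CLT for the density $p_N^f$ of $\sum X_i^2$ (with the $L^p$ hypothesis supplying enough smoothing after finitely many convolutions), the coarea identity expressing $Z_N$ and $\Pi_k(F_N)$ through $p_N^f$ (your two displayed formulas are correct and are exactly what Lemma \ref{lem: fubini type formula on the sphere} and Corollary \ref{cor: marginals} produce upon telescoping the Jacobians), Scheff\'e's lemma to convert pointwise convergence of the marginals to weak convergence, and the entropy decomposition $\log F_N = \sum_i \log f(v_i) - \log Z_N$ together with Stirling.

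Two small imprecisions in the final step. First, the uniform LCLT you state already gives $\sup_s p_{N-1}^f(s)\le C/\sqrt{N-1}$ together with $p_N^f(N)\ge c/\sqrt{N}$ for large $N$, so $\Pi_1(F_N)\le C'f$ on all of $\mathbb{R}$; the bulk/tail split and the separate moderate-deviations estimate are therefore superfluous. Second, appealing to the ``finiteness of $H(f\,|\,\gamma)$'' to control $\int f(\log f)_-$ is circular, since that finiteness is part of what is being established; the negative part should instead be bounded by comparison with $\gamma$ via $a\log(b/a)\le b-a$, which uses only $\int x^2 f<\infty$, while the positive part is (as you say) handled by $\log t\le (t^{p-1}-1)/(p-1)$ from $f\in L^p$. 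Beyond that, the one genuine technical point you leave compressed is the verification that $f\in L^p$ for some $p>1$ forces enough regularity on the density of $X^2$ --- which has an integrable singularity at $0$ and need not itself be in any $L^{1+\epsilon}$ --- for its characteristic function to become integrable after finitely many convolutions; this is the nontrivial lemma in \cite{CCRLV} underpinning the uniform LCLT. With those repairs the outline is sound and is essentially the argument of \cite{CCRLV}.
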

Recently, Carrapatoso has extended this result to the more realistic McKean model, conditioned to the Boltzmann sphere instead of the Kac's sphere (see \cite{Carr}).\\
As we saw before, entropic chaoticity is a very intuitive concept that arises naturally when one investigate relationships between the relaxation rates to equilibrium in the $N$-particle model and its mean field limit. We would like to understand the concept better and explore the delicate balance required for entropic chaoticity to hold. In order to do that, we explore in this paper ways to construct families of probability densities that are chaotic but \emph{not} entropically chaotic, noting the reasons for that. Our first result is the following:
\begin{theorem}\label{thm: limiting convex combination result}
Let $f$ satisfy the conditions of Theorem \ref{thm: usual suspects are good}, then there exists an $f-$chaotic family that is not entropically chaotic.
\end{theorem}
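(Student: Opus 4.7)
Following the plan announced in the abstract, I will exhibit the desired family as a limiting convex combination
\[
\widetilde F_N=\alpha_N F_N^\ast+(1-\alpha_N)G_N,\qquad \alpha_N\to 1,
\]
where $F_N$ is the conditioned tensor product (\ref{eq: usual suspect}) --- which is $f$-entropically chaotic by Theorem~\ref{thm: usual suspects are good} --- $G_N$ is an entropically pathological symmetric density, and $F_N^\ast$ is a harmless truncation of $F_N$ whose support is disjoint from that of $G_N$. Concretely, pick a permutation-invariant measurable set $A_N\subset\mathbb{S}^{N-1}\left(\sqrt{N}\right)$ with $\delta_N:=\sigma^N(A_N)$ decaying faster than any exponential (say $\delta_N=e^{-N^3}$), for instance the symmetrization of a sufficiently small spherical cap, and set
\[
G_N=\delta_N^{-1}\mathds{1}_{A_N},\qquad F_N^\ast=\frac{F_N\mathds{1}_{A_N^c}}{Z_N},\quad Z_N:=\int_{A_N^c}F_N\,d\sigma^N,
\]
so that $G_NF_N^\ast\equiv 0$ and $H_N(G_N)=-\log\delta_N=N^3$.

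Chaoticity of $\widetilde F_N$ is automatic. Under the $L^p$ assumption on $f$, Hölder gives $\int_{A_N}F_N\,d\sigma^N\to 0$, so $Z_N\to 1$ and the $k$-th marginal of $F_N^\ast$ still converges weakly to $f^{\otimes k}$. Since marginals are affine,
\[
\Pi_k\widetilde F_N=\alpha_N\Pi_kF_N^\ast+(1-\alpha_N)\Pi_kG_N,
\]
and testing against any $\varphi\in C_b(\mathbb{R}^k)$ the second summand is bounded by $(1-\alpha_N)\|\varphi\|_\infty\to 0$. Hence $\widetilde F_N$ is $f$-chaotic for every sequence $\alpha_N\to 1$.

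Disjointness of supports makes the entropy computation transparent: on $A_N^c$ we have $\widetilde F_N=\alpha_N F_N^\ast$ and on $A_N$ we have $\widetilde F_N=(1-\alpha_N)G_N$, so splitting the integral yields the exact identity
\[
H_N(\widetilde F_N)=\alpha_N H_N(F_N^\ast)+(1-\alpha_N)H_N(G_N)-h(\alpha_N),
\]
where $h(t)=-t\log t-(1-t)\log(1-t)$. Choose $\alpha_N=1-1/N$: then $h(\alpha_N)/N\to 0$ and $(1-\alpha_N)H_N(G_N)/N=N\to\infty$, while $H_N(F_N^\ast)/N\to H(f|\gamma)$ by direct comparison with $H_N(F_N)/N$ (see below). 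Dividing by $N$ produces $H_N(\widetilde F_N)/N\to+\infty$, well away from $H(f|\gamma)$, so $\widetilde F_N$ cannot be entropically chaotic.

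The only nontrivial step is to show that the truncation $F_N\mapsto F_N^\ast$ does not alter the leading-order entropy, i.e.\ that $\int_{A_N}F_N\,d\sigma^N=o(1)$ and $\int_{A_N}F_N\log F_N\,d\sigma^N=o(N)$. This should follow routinely from Hölder applied to the $L^p$ control on $F_N$ inherited from $f\in L^p$, since $\delta_N^{1/p'}$ is faster than exponentially small and absorbs any (at most) exponential growth of $\|F_N\|_{L^p(\sigma^N)}$. This quantitative balancing between $\delta_N$, $1-\alpha_N$ and $\|F_N\|_{L^p(\sigma^N)}$ is the main technical obstacle I anticipate, but it is not a conceptual one.
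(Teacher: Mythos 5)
Your proposal is correct in spirit but takes a genuinely different and somewhat more involved route than the paper's. The paper proves the theorem by combining a general lemma (Lemma~\ref{lem: limit convex and entropic chaoticity}) with the explicit construction of a family $F_N$ satisfying $H_N(F_N)/N\to\infty$ (the concentration and stereographic examples of Sections~\ref{sec: concentration} and~\ref{sec: stereographic}). The crucial point in that lemma is that \emph{no} support manipulation is required: monotonicity of the logarithm already gives the one-sided bound
\[
H_N\bigl((1-\alpha_N)G_N+\alpha_N F_N\bigr)\geq (1-\alpha_N)\log(1-\alpha_N)+(1-\alpha_N)H_N(G_N)+\alpha_N\log\alpha_N+\alpha_N H_N(F_N),
\]
which, together with non-negativity of the entropies and a choice of $\alpha_N\to 0$ tuned to the blow-up rate of $H_N(F_N)/N$, forces $H_N(C_N)/N\to\infty$ while chaoticity is preserved as in Lemma~\ref{lem: limting convex retain chaoticity}. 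Your rescaled-indicator family $G_N=\delta_N^{-1}\mathds{1}_{A_N}$ is a perfectly good --- in fact simpler --- replacement for the paper's ``bad'' families, and your exact entropy identity via disjoint supports is a nice refinement of the lemma's inequality.

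However, the truncation $F_N\mapsto F_N^\ast$ that you introduce to obtain that exact identity creates the only genuine gap in your argument, and it is an unnecessary one. You flag as the main obstacle the claim $\int_{A_N}F_N\log F_N\,d\sigma^N=o(N)$, which you need in order to assert $H_N(F_N^\ast)/N\to H(f|\gamma)$. But that convergence is not used anywhere in the conclusion: since $H_N(F_N^\ast)\geq 0$ (it is a relative entropy against $d\sigma^N$), the term $\alpha_N H_N(F_N^\ast)/N$ can simply be dropped from below, and divergence of $(1-\alpha_N)H_N(G_N)/N=N$ already carries the argument. Better still, you can skip the truncation entirely: take $\widetilde F_N=\alpha_N F_N+(1-\alpha_N)G_N$ with the untruncated $F_N$, use the logarithm-monotonicity lower bound above, and you get the same conclusion with no need for the H\"older estimate at all --- the marginals of $F_N$ (rather than $F_N^\ast$) converge to $f^{\otimes k}$ directly from Theorem~\ref{thm: usual suspects are good}. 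So the technical obstacle you anticipate can be sidestepped completely, and once it is, your proof becomes essentially a self-contained re-derivation of the paper's Lemma~\ref{lem: limit convex and entropic chaoticity}(ii) with an explicit and elementary choice of the entropically pathological family.
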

The method to prove this theorem is one of a limiting convex combination, and would be described in Section \ref{sec: limiting convex}. This is not the only way to destroy chaoticity. A different way is to create families that depend on $N$ strongly, and not only as an increase of the number of variable. Our next two results will deal with two explicitly computable family of probability densities that fails entroic chaoticity due to that reason.
\begin{theorem}\label{thm: entropic fail on usual suspect}
Let $f_N(v)=\delta_N M_{\frac{1}{2\delta_N}}(v)+(1-\delta_N) M_{\frac{1}{2(1-\delta_N)}}(v)$ where $M_a (v)=\frac{e^{-\frac{v^2}{2a}}}{\sqrt{2\pi a}}$ and $\delta_N=\frac{1}{N^\eta}$ with $\eta$ close to $1$. Then the family of probability densities defined in (\ref{eq: usual suspect}) is $M_{\frac{1}{2}}-$chaotic but not entropically chaotic.
\end{theorem}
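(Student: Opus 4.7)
The heuristic driving the proof is that although $f_N(v)\to M_{1/2}(v)$ pointwise and weakly, the density $f_N$ satisfies $\int v^2 f_N(v)\,dv=1$ for every $N$ — exactly the energy budget needed to sit on the Kac sphere of radius $\sqrt{N}$ — while the limit $M_{1/2}$ has only $\int v^2 M_{1/2}(v)\,dv=\tfrac{1}{2}$. The missing $1/2$ unit of variance is carried by the vanishing but extremely broad component $\delta_N M_{1/(2\delta_N)}$, and this quantitative mismatch is the source of an entropic excess of $1/4$ per particle in the limit. The exponent $\eta$ close to $1$ is tuned so that the fat piece is sparse enough not to disturb marginal chaoticity, yet its characteristic-function estimates remain within reach.

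For chaoticity (Step~1) I would imitate the proof of Theorem~\ref{thm: usual suspects are good} with $f$ replaced by $f_N$. Writing
$$\Pi_1(F_N)(v_1)=f_N(v_1)\,\frac{u_{N-1}^{f_N}(N-v_1^2)}{u_{N}^{f_N}(N)}\cdot\mathcal G_N(v_1),$$
where $u_k^{f_N}$ is the density of $\sum_{i=1}^k v_i^2$ under $f_N^{\otimes k}$ and $\mathcal G_N(v_1)$ is an explicit geometric prefactor coming from the surface areas of $\mathbb{S}^{N-2}(\sqrt{N-v_1^2})$ and $\mathbb{S}^{N-1}(\sqrt{N})$ (controlled by Stirling and tending to $1$), reduces the problem to local asymptotics of $u_k^{f_N}$ near its mean. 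The characteristic function of $v_i^2$ is the explicit $\chi^2$-mixture $\delta_N(1-i\xi/\delta_N)^{-1/2}+(1-\delta_N)(1-i\xi/(1-\delta_N))^{-1/2}$, and a Fourier-inversion argument yields $u_{N-1}^{f_N}(N-v_1^2)/u_{N}^{f_N}(N)\to 1$ uniformly for $v_1$ in compacts. Combined with $f_N(v_1)\to M_{1/2}(v_1)$ pointwise and tightness (since $\|\delta_N M_{1/(2\delta_N)}\|_{L^1}=\delta_N\to 0$), this gives $\Pi_1(F_N)\to M_{1/2}$ weakly; higher marginals follow in the same way.

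For the failure of entropic chaos (Step~2), start from
$$\frac{H_N(F_N)}{N}=\int f_N^{(1)}(v)\log f_N(v)\,dv-\frac{\log Z_N}{N},\qquad Z_N=\int_{\mathbb{S}^{N-1}(\sqrt{N})}\prod_{i=1}^N f_N(v_i)\,d\sigma^N,$$
and show the right-hand side tends to $\tfrac{1}{2}\log 2$, which strictly exceeds $H(M_{1/2}\,|\,\gamma)=\tfrac{1}{2}\log 2-\tfrac{1}{4}$. For the first term, split at $|v|\le R$: Step~1 and the uniform convergence $\log f_N\to\log M_{1/2}$ produce $\int_{|v|\le R}M_{1/2}\log M_{1/2}\,dv$, while the tail estimate $\log f_N(v)\approx\tfrac{1}{2}\log(\delta_N/\pi)-\delta_N v^2$ together with $f_N^{(1)}$ placing only $O(\delta_N)$ of its mass outside $[-R,R]$ (carrying second moment $\approx 1/2$) shows the tail contribution vanishes in the iterated limit, leaving $\int M_{1/2}\log M_{1/2}\,dv=-\tfrac{1}{2}\log(\pi e)$. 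For the second term, combining the formula
$$Z_N=\frac{u_{N}^{f_N}(N)\,\Gamma(N/2)}{\pi^{N/2}\,N^{(N-2)/2}}$$
with Stirling gives the geometric contribution $\tfrac{1}{2}\log(2\pi e)$, while the Fourier estimate from Step~1 already shows $(\log u_{N}^{f_N}(N))/N\to 0$. Adding up, $H_N(F_N)/N\to -\tfrac{1}{2}\log(\pi e)+\tfrac{1}{2}\log(2\pi e)=\tfrac{1}{2}\log 2$, and entropic chaoticity fails by exactly $1/4$.

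The main obstacle is the uniform-in-$N$ control of $u_N^{f_N}(N)$: the variance of $v_i^2$ under $f_N$ grows like $3/(4\delta_N)=O(N^\eta)$, so the classical LCLT with rate is unavailable. The explicit Gaussian-mixture structure of $f_N$ is essential here, as every estimate reduces to an integral of a convex combination of two $\chi^2$ characteristic functions, which is smooth enough at the origin to sustain the LCLT-type bound one needs; the assumption that $\eta$ is close to $1$ keeps all the resulting correction factors subexponential in $N$, which is all that is needed to preserve the leading-order entropy asymptotics.
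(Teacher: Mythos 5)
Your proposal is correct and follows essentially the same route as the paper: both hinge on a local central limit theorem for the normalization function $\mathcal{Z}_N(f_N,\sqrt{u})$ (equivalently, the density $u_N^{f_N}$ of $\sum v_i^2$, which the paper's Lemma~\ref{lem: approximation lemma} records and delegates to \cite{Einav1}), use it via Corollary~\ref{cor: marginals} to get pointwise convergence of marginals and hence $M_{1/2}$-chaoticity, and then split $H_N(F_N)/N$ into the same two pieces (the $\Pi_1$-weighted integral of $\log f_N$, handled by a tail argument/dominated convergence, and $-\tfrac1N\log\mathcal{Z}_N$, handled by Stirling plus the LCLT) to obtain the limit $\tfrac12\log 2 > H(M_{1/2}\,|\,\gamma)$.
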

We will see that the reason behind this failure is that the rapid change of $N$ causes the family to 'lose support at infinity'. The last result we will show is the following:
\begin{theorem}\label{thm: entropci fail on polynomial}
Let $F_N=\frac{\sum_{i=1}^N |v_i|^N}{\mathfrak{Z}_N}$ where $\mathfrak{Z}_N$ is the appropriate normalization function. Then $\left\lbrace F_N \right\rbrace_{N\in\mathbb{N}}$ is $M_{\frac{1}{2}}-$chaotic but not entropically chaotic.
\end{theorem}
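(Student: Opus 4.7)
The plan is to decompose $F_N$ as a uniform mixture that separates out the coordinate carrying the ``spike'' from the remaining ones. By symmetry $\mathfrak{Z}_N = N\int_{\mathbb{S}^{N-1}(\sqrt{N})}|v_1|^N d\sigma^N$, and this last integral can be computed exactly: the law of $v_1^2/N$ under $d\sigma^N$ is $\mathrm{Beta}(1/2,(N-1)/2)$, and Legendre duplication gives $\int |v_1|^N d\sigma^N = N^{N/2}/2^{N-1}$, so $\mathfrak{Z}_N = N^{N/2+1}/2^{N-1}$. Setting $G_N^{(i)}(v) := N|v_i|^N/\mathfrak{Z}_N$, each $G_N^{(i)}$ is a probability density on Kac's sphere and
\begin{equation*}
F_N d\sigma^N = \frac{1}{N}\sum_{i=1}^N G_N^{(i)} d\sigma^N.
\end{equation*}

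For the chaoticity I would analyse one tilted component, say $G_N^{(1)} d\sigma^N$. Under this measure the law of $y=v_1^2/N$ becomes the size-biased $\mathrm{Beta}((N+1)/2,(N-1)/2)$, which concentrates at $y=1/2$, so $|v_1|\approx\sqrt{N/2}$ and $v_1$ escapes to infinity against any bounded continuous test function. Conditional on $v_1$, the remaining $(v_2,\dots,v_N)$ is uniform on $\mathbb{S}^{N-2}(\sqrt{N-v_1^2})$, a sphere of radius $\approx \sqrt{N/2}$; and since uniform measure on $\mathbb{S}^{m-1}(\sqrt{m\alpha})$ is $M_\alpha$-chaotic, the remaining marginals tend to products of $M_{1/2}$. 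Splitting $\Pi_k(F_N) = \frac{1}{N}\sum_i \Pi_k(G_N^{(i)})$ into $i\leq k$ (which vanish weakly because the spike at $v_i$ escapes; quantitatively $|v_i|^N/\mathfrak{Z}_N$ is exponentially small on compact sets) and $i>k$ (each, by symmetry, converging to $M_{1/2}^{\otimes k}$), the weight $\frac{N-k}{N}\to 1$ delivers $\Pi_k(F_N)\to M_{1/2}^{\otimes k}$.

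For the entropy step I would use the standard mixture sandwich
\begin{equation*}
H(G_N^{(1)}\,|\,d\sigma^N) - \log N \;\leq\; H_N(F_N) \;\leq\; H(G_N^{(1)}\,|\,d\sigma^N),
\end{equation*}
(right inequality is convexity of relative entropy, left follows from $\log f\geq \log(\lambda_i f_i)$ on the support of $f_i$), reducing the task to evaluating $H(G_N^{(1)}\,|\,d\sigma^N)/N$ up to an $O((\log N)/N)$ error. Writing $\log G_N^{(1)} = N\log|v_1|-\log\int|v_1|^N d\sigma^N$ and integrating against $G_N^{(1)} d\sigma^N$ gives
\begin{equation*}
H(G_N^{(1)}\,|\,d\sigma^N) = N\,\mathbb{E}[\log|v_1|] - \log\int|v_1|^N d\sigma^N.
\end{equation*}
The digamma identity $\mathbb{E}[\log y]=\psi((N+1)/2)-\psi(N)$ for $y\sim\mathrm{Beta}((N+1)/2,(N-1)/2)$ combined with $\psi(x)=\log x+O(1/x)$ gives $\mathbb{E}[\log y]=-\log 2+O(1/N)$, hence $N\,\mathbb{E}[\log|v_1|] = \frac{N}{2}\log N -\frac{N}{2}\log 2 + O(1)$. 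Combined with $\log\int|v_1|^Nd\sigma^N=\frac{N}{2}\log N - (N-1)\log 2$ from above, the $\frac{N}{2}\log N$ terms cancel and leave $H_N(F_N)/N\to\frac{\log 2}{2}$. A short Gaussian computation gives $H(M_{1/2}\,|\,\gamma)=\frac{\log 2}{2}-\frac{1}{4}$, so the two quantities disagree by exactly $\frac14$ and entropic chaoticity fails.

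The main obstacle I expect lies in the entropy step: the dominant $\frac{N}{2}\log N$ contributions are far larger than the $O(N)$ quantity we want to extract, and they must cancel cleanly to reveal the constant $\frac{\log 2}{2}$. This forces one to track the $O(1)$ corrections and makes it essential to use the \emph{exact} normalization $\int|v_1|^Nd\sigma^N = N^{N/2}/2^{N-1}$ rather than a merely asymptotic Stirling estimate. The chaoticity part is comparatively routine, requiring only crude exponential decay bounds on $|v_i|^N/\mathfrak{Z}_N$ on compact sets to kill the $k$ spike-containing terms.
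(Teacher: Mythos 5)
Your proposal is correct and proves a slightly stronger result than the paper does. The paper establishes only the one-sided bound $\liminf_N H_N(F_N)/N \geq \tfrac{\log 2}{2}$, which suffices because $H(M_{1/2}\,|\,\gamma) = \tfrac{\log 2}{2} - \tfrac14$ is strictly smaller. Your mixture sandwich
\begin{equation*}
H(G_N^{(1)}\,|\,d\sigma^N) - \log N \;\leq\; H_N(F_N) \;\leq\; H(G_N^{(1)}\,|\,d\sigma^N)
\end{equation*}
together with the exact digamma evaluation of $\mathbb{E}[\log|v_1|]$ under the size-biased law of $v_1$ pins down the genuine limit $H_N(F_N)/N \to \tfrac{\log 2}{2}$, not just a lower bound. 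The paper instead gets its lower bound on the entropic tail term $\int |v_1|^N\log|v_1|^N\,d\sigma^N$ from the elementary inequality $t^\alpha\log t \geq -\tfrac{1}{\alpha e}$ optimized over $\alpha$; your Beta/digamma route is cleaner and sharper, while the paper's is more self-contained. For the chaoticity half, the two arguments differ in style rather than substance: the paper computes the $k$-th marginal $\Pi_k(F_N)$ in closed form from its Fubini-type formula and Lemma \ref{lem: integration of |v|^m}, then invokes Corollary \ref{cor: marginal and chaoticity} to upgrade pointwise convergence; you decompose $F_N$ into the tilted components $G_N^{(i)}$, argue that the $i\leq k$ pieces vanish on compacts because the spike coordinate escapes, and that each $i>k$ piece converges via concentration of $v_i^2/N$ near $\tfrac12$ plus the Gaussian chaoticity of the uniform measure on the residual sphere. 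Both work; your conditional decomposition is more conceptual but would need the concentration of the Beta$((N+1)/2,(N-1)/2)$ law and the uniformity in the conditioning variable to be fleshed out, whereas the paper's explicit marginal formula makes the pointwise limit immediate.
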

The reason behind this failure will be too high an entropic tail.\\ 
The paper is structured as follows: Section \ref{sec: limiting convex} will describe the idea of limiting convex combination and will show how exactly such idea will be useful in building chaotic families that are not entropically chaotic. Sections \ref{sec: concentration} and \ref{sec: stereographic} will apply that idea to build our first two examples. The first using concentration methods with the natural coordinates on the sphere and the second using the stereographic projection and a process of 'pushing' the function to 'infinity'. Section \ref{sec: marginals on the sphere} will provide a few technical lemmas that will help us with explicit computation on the sphere, while in Section \ref{sec: escaping tensorisation} we will prove Theorem \ref{thm: entropic fail on usual suspect}. In Section \ref{sec: polynomials} we will prove Theorem \ref{thm: entropci fail on polynomial}  as well as introduce another family of polynomials that is entropically chaotic (to stress the effect of the varying power). Lastly, in Section \ref{sec: final remarks} we will discuss a few closing remarks. The Appendix to the paper contains more detailed information about the stereographic projection we use.\\
\textbf{Acknowledgement:}
The author would like to thank Cl\'ement Mouhot for many fruitful discussions, sharing of ideas and constant encouragement as well as careful reading of the manuscript and providing many useful remarks.
\section{Limting Convex Combinations.}\label{sec: limiting convex}
The concept of convexity is not alien to that of chaoticity or entropy. Several counter examples to known conjectures (such as Cercignani's conjecture) have been built using a convex combination of special stationary states (see \cite{BC}). Recently, the author has used a similar idea, but with convex coefficients that depend on $N$, in order to find an explicit bound to the entropy-entropy production ratio (see \cite{Einav1, Einav2}) - this idea is behind what we will call 'limiting convex combination'  
\begin{definition}\label{def: limit convex}
Let $\left\lbrace G_N \right\rbrace_{N\in\mathbb{N}}$ and $\left\lbrace F_N \right\rbrace_{N\in\mathbb{N}}$ be families of probability densities on $\mathbb{S}^{N-1}\left(\sqrt{N} \right)$ and let $\left\lbrace \alpha_N \right\rbrace_{N\in\mathbb{N}}$ be a sequence of real numbers such that $0<\alpha_N<1$ for all $N\in\mathbb{N}$, and $\lim_{N\rightarrow\infty}\alpha_N=0$. Then the family of probability densities
\begin{equation}\label{eq: limit convex}
C_N=(1-\alpha_N)G_N+\alpha_N F_N,
\end{equation}
is called the limiting convex combination of $G_N$ and $F_N$.
\end{definition}
We will start with a few simple properties of the limiting convex combination.
\begin{lemma}\label{lem: limting convex retain chaoticity}
Let $\left\lbrace G_N \right\rbrace_{N\in\mathbb{N}}$ and $\left\lbrace F_N \right\rbrace_{N\in\mathbb{N}}$ be symmetric probability densities on $\mathbb{S}^{N-1}\left(\sqrt{N} \right)$. If $\left\lbrace G_N \right\rbrace_{N\in\mathbb{N}}$ is $g-$chaotic then any limiting convex combination of $G_N$ and $F_N$ is $g-$chaotic. 
\end{lemma}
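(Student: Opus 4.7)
The plan is to exploit the fact that chaoticity is tested through weak convergence of marginals, and that the marginal operator $\Pi_k$ is linear. This linearity plus the uniform bound $\alpha_N < 1$ should make the proof essentially a one-line observation dressed up with the correct quantifier bookkeeping.

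First I would check that $C_N$ is indeed a symmetric probability density on $\mathbb{S}^{N-1}(\sqrt{N})$; this is immediate since it is a convex combination (with coefficients $1-\alpha_N$ and $\alpha_N$) of two symmetric probability densities. Next, I would fix $k \in \mathbb{N}$ and use linearity of the marginal projection to write
\begin{equation*}
\Pi_k(C_N) = (1-\alpha_N)\,\Pi_k(G_N) + \alpha_N\, \Pi_k(F_N).
\end{equation*}
Then, for an arbitrary bounded continuous test function $\varphi \in C_b(\mathbb{R}^k)$, I would compute
\begin{equation*}
\int_{\mathbb{R}^k} \varphi\, \Pi_k(C_N)\, dv \;=\; (1-\alpha_N)\int_{\mathbb{R}^k} \varphi\, \Pi_k(G_N)\, dv \;+\; \alpha_N \int_{\mathbb{R}^k} \varphi\, \Pi_k(F_N)\, dv.
\end{equation*}

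The first term tends to $\int \varphi\, g^{\otimes k}\, dv$: the scalar prefactor $(1-\alpha_N)$ tends to $1$, and the integral converges by the hypothesis that $\{G_N\}$ is $g$-chaotic in the sense of Definition \ref{def: chaoticity}. For the second term I would note that $\Pi_k(F_N)$ is a probability density on $\mathbb{R}^k$ (in particular has total mass one), hence
\begin{equation*}
\left| \alpha_N \int_{\mathbb{R}^k} \varphi\, \Pi_k(F_N)\, dv \right| \;\leq\; \alpha_N \, \lVert \varphi \rVert_\infty \;\longrightarrow\; 0,
\end{equation*}
regardless of any behaviour of the family $\{F_N\}$. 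Combining the two yields
\begin{equation*}
\lim_{N\to\infty} \int_{\mathbb{R}^k} \varphi\, \Pi_k(C_N)\, dv = \int_{\mathbb{R}^k} \varphi\, g^{\otimes k}\, dv,
\end{equation*}
which is exactly the definition of $g$-chaoticity for $\{C_N\}$.

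There is no real obstacle here: the subtlety, if any, is just making sure one uses the \emph{weak} topology for which mere boundedness of $\varphi$ (hence the trivial $L^1$-bound on $\Pi_k(F_N)$) is enough to drown the second term. No assumption on $\{F_N\}$ beyond being a family of symmetric probability densities is needed, which is precisely why this construction is so flexible and why subsequent sections can afford to choose $F_N$ as a rather pathological distribution in order to sabotage entropic chaoticity while keeping plain chaoticity intact.
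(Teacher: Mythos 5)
Your proof is correct and is essentially the same argument as the paper's: you test against an arbitrary $\phi\in C_b(\mathbb{R}^k)$, kill the $\alpha_N$-term via the uniform bound $\alpha_N\lVert\phi\rVert_\infty\to 0$ (using only that $F_N$ is a probability density), and let chaoticity of $\{G_N\}$ together with $1-\alpha_N\to 1$ handle the other term. The only cosmetic difference is that you phrase the decomposition through linearity of $\Pi_k$ acting on $\mathbb{R}^k$, while the paper writes the same two integrals directly over the sphere; these are identical by the definition of the $k$-th marginal.
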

\begin{proof}
Assume $C_N$ is a limiting convex combination as defined in (\ref{eq: limit convex}). Given any $\phi\in C_b \left( \mathbb{R}^k \right)$, for a fixed $k\in\mathbb{N}$, we have that
\begin{equation}\label{eq: alpha_N F_N doesn't ruin chaoticity}
\begin{gathered}
\left\lvert \alpha_N \int_{\mathbb{S}^{N-1}\left(\sqrt{N} \right)} F_N(v_1,\dots,v_N)\phi(v_1,\dots,v_k)d\sigma^N \right\rvert
\leq \alpha_N \left\lVert \phi \right\rVert_\infty\underset{N\rightarrow\infty}{\longrightarrow}0.
\end{gathered}
\end{equation}
And
\begin{equation}\label{eq: 1-alpha_N G_N is chaotic}
\begin{gathered} 
(1-\alpha_N) \int_{\mathbb{S}^{N-1}\left(\sqrt{N} \right)} G_N(v_1,\dots,v_N)\phi(v_1,\dots,v_k)d\sigma^N\\ \underset{N\rightarrow\infty}{\longrightarrow}\int_{\mathbb{R}^k} g^{\otimes k}(v_1,\dots,v_k)\phi(v_1,\dots,v_k)dv_1 \dots dv_k,
\end{gathered}
\end{equation}
proving the result.
\end{proof}
\begin{remark}\label{rem: chaoticity is weak}
Notice that in Lemma \ref{lem: limting convex retain chaoticity} there is no requirement of chaoticity on $F_N$, only that of symmetry! This shows how weak the condition of chaoticity is with respect to limiting convex combination.
\end{remark}
What of entropic chaticity? Can we get any result similar to our previous lemma? The answer to this question is \emph{Yes}, but more than that - we can find simple conditions when limiting convex combinations are \emph{not} entropically chaotic.
\begin{lemma}\label{lem: limit convex and entropic chaoticity}
Let $\left\lbrace G_N \right\rbrace_{N\in\mathbb{N}}$ be a $g-$entropically chaotic family of probability densities and $\left\lbrace F_N \right\rbrace_{N\in\mathbb{N}}$ be symmetric probability densities on Kac's sphere. Then 
\begin{enumerate}[(i)]
\item If $\limsup_{N\rightarrow\infty} \frac{H_N(F_N)}{N}<\infty$ then any limiting convex combination of $G_N$ and $F_N$ is $g-$entropically chaotic.
\item If $\liminf_{N\rightarrow\infty} \frac{H_N(F_N)}{N}=\infty$ then there exists a limiting convex combination of $G_N$ and $F_N$ that is not $g-$entropically chaotic but is $g-$chaotic.
\end{enumerate}
\end{lemma}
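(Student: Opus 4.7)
My plan is to bracket $H_N(C_N)$ between two linear combinations of $H_N(G_N)$ and $H_N(F_N)$ and then divide by $N$, exploiting $\alpha_N \to 0$ together with the $g$-entropic chaoticity of $\{G_N\}$. The key estimate I aim to prove in one stroke is
\begin{equation}\label{eq: mixture sandwich}
(1-\alpha_N) H_N(G_N) + \alpha_N H_N(F_N) - \log 2 \leq H_N(C_N) \leq (1-\alpha_N)H_N(G_N) + \alpha_N H_N(F_N).
\end{equation}
The upper bound is immediate from pointwise convexity of $x\log x$. For the lower bound I would split $H_N(C_N) = (1-\alpha_N)\int G_N \log C_N\, d\sigma^N + \alpha_N \int F_N \log C_N\, d\sigma^N$ and insert $\log C_N \geq \log((1-\alpha_N)G_N)$ in the first integral and $\log C_N \geq \log(\alpha_N F_N)$ in the second; the residual binary entropy $-\alpha_N\log\alpha_N-(1-\alpha_N)\log(1-\alpha_N)$ is bounded by $\log 2$.

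For part (i), dividing \eqref{eq: mixture sandwich} by $N$ gives $H_N(C_N)/N \to H(g|\gamma)$: the term $(1-\alpha_N)H_N(G_N)/N$ tends to $H(g|\gamma)$ by the $g$-entropic chaoticity of $\{G_N\}$, while $\alpha_N H_N(F_N)/N \to 0$ since $H_N(F_N)/N$ is bounded above and $\alpha_N \to 0$ (and $H_N(F_N)\geq 0$ rules out the other sign issue). Combined with the $g$-chaoticity provided by Lemma~\ref{lem: limting convex retain chaoticity}, this shows $\{C_N\}$ is $g$-entropically chaotic.

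For part (ii), set $\beta_N := H_N(F_N)/N \in [0,+\infty]$. The hypothesis forces $\beta_N \to \infty$. I would choose $\alpha_N$ vanishing slowly enough that $\alpha_N \beta_N$ still blows up --- for instance $\alpha_N = \min(\beta_N^{-1/2}, 1/2)$, with $\alpha_N$ redefined as, say, $1/N$ on any indices where $\beta_N = +\infty$. Plugging this into the lower half of \eqref{eq: mixture sandwich} yields $H_N(C_N)/N \geq (1-\alpha_N)H_N(G_N)/N + \alpha_N \beta_N - (\log 2)/N \to +\infty$, so $\{C_N\}$ is \emph{not} $g$-entropically chaotic (here we use $H(g|\gamma) < \infty$), yet it remains $g$-chaotic by Lemma~\ref{lem: limting convex retain chaoticity}.

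The only mildly delicate step is the lower bound in \eqref{eq: mixture sandwich} when $G_N$ or $F_N$ vanishes on sets of positive measure: the pointwise inequality on $\log$ degenerates to $-\infty \geq -\infty$ there, but the prefactor $G_N$ (resp.\ $F_N$) annihilates the singular set via the usual $0\cdot(-\infty) = 0$ convention, so the integrated inequality is unaffected. Apart from this bookkeeping the whole argument is elementary, and the real content of the lemma is the remark that $\alpha_N$ has two jobs --- kill the $F_N$-contribution in (i) and, conversely, can be tuned to magnify it in (ii).
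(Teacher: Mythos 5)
Your proof is correct, and the two key estimates (the convexity upper bound and the lower bound obtained by discarding the other term inside the logarithm) are exactly the ones the paper uses. There is, however, one genuine divergence worth noting. For part (i) the paper only uses the convexity upper bound to get $\limsup H_N(C_N)/N \leq H(g|\gamma)$, and then obtains the matching lower bound $\liminf H_N(C_N)/N \geq H(g|\gamma)$ by invoking the nontrivial lower-semicontinuity result from \cite{CCRLV} (for \emph{any} $g$-chaotic family the normalized entropy has $\liminf$ at least $H(g|\gamma)$). You instead extract the lower bound directly from your sandwich inequality \eqref{eq: mixture sandwich}, using that $\alpha_N H_N(F_N)/N \to 0$ (nonnegativity of entropy plus $\limsup H_N(F_N)/N<\infty$) and $(1-\alpha_N)H_N(G_N)/N \to H(g|\gamma)$; this makes the argument for (i) entirely self-contained and elementary, at the cost of deriving the lower estimate twice rather than once. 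For part (ii) you also go slightly further than the paper: where the paper just observes that $\alpha_N$ can be chosen to make $\alpha_N H_N(F_N)/N$ blow up, you give an explicit legal choice $\alpha_N=\min(\beta_N^{-1/2},1/2)$ with a patch for indices where $\beta_N=+\infty$, and you correctly flag the $0\cdot(-\infty)$ bookkeeping. Both versions are fine; yours trades a citation for a two-sided sandwich.
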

\begin{corollary}\label{cor: limit convex of entropically chaotic is entropically chaotic}
If $G_N$ and $F_N$ are entropically chaotic then so is any limiting convex combination of them. 
\end{corollary}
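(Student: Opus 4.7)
The corollary is essentially a one-line consequence of part (i) of Lemma \ref{lem: limit convex and entropic chaoticity}, so the plan is simply to verify that its hypothesis is met. Concretely, assume $\left\lbrace G_N \right\rbrace$ is $g$-entropically chaotic and $\left\lbrace F_N \right\rbrace$ is $f$-entropically chaotic for some probability densities $g$ and $f$ (with $H(g|\gamma), H(f|\gamma)<\infty$, which is the standing convention of the paper). The first step is to extract the boundedness needed to invoke (i): by the very definition of entropic chaoticity applied to $\left\lbrace F_N \right\rbrace$,
\begin{equation*}
\lim_{N\to\infty}\frac{H_N(F_N)}{N}=H(f|\gamma)<\infty,
\end{equation*}
which in particular yields $\limsup_{N\to\infty} H_N(F_N)/N<\infty$.

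Next, I would apply Lemma \ref{lem: limit convex and entropic chaoticity}(i) with this $\left\lbrace F_N \right\rbrace$ in the role of the second family: the hypotheses are that $\left\lbrace G_N \right\rbrace$ is $g$-entropically chaotic (which we have), that $\left\lbrace F_N \right\rbrace$ is symmetric (which follows from being entropically chaotic in the sense of the paper), and that $\limsup H_N(F_N)/N<\infty$ (just established). The conclusion of (i) is then exactly that any limiting convex combination $C_N=(1-\alpha_N)G_N+\alpha_N F_N$ is $g$-entropically chaotic, which is what the corollary asserts.

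There is really no obstacle here beyond making sure we have the right to apply (i); the substantive content is all inside Lemma \ref{lem: limit convex and entropic chaoticity}. I would note, as a short remark, the asymmetry of the situation: entropic chaoticity of $\left\lbrace F_N \right\rbrace$ is used only through a crude upper bound on its normalized entropy, so the corollary is a strict special case of (i), and the chaoticity limit of the combination is dictated solely by the family $\left\lbrace G_N \right\rbrace$ whose weight tends to $1$ (consistent with Lemma \ref{lem: limting convex retain chaoticity}).
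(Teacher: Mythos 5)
Your proof is correct and matches the paper's intent exactly: the corollary is an immediate consequence of Lemma \ref{lem: limit convex and entropic chaoticity}(i), once one observes that $f$-entropic chaoticity of $\left\lbrace F_N \right\rbrace$ forces $\limsup_N H_N(F_N)/N = H(f|\gamma) < \infty$ under the paper's standing finiteness convention. No further comment is needed.
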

\begin{proof}[Proof of Lemma \ref{lem: limit convex and entropic chaoticity}]
The $g-$chaoticity of any limiting convex combination was established in Lemma \ref{lem: limting convex retain chaoticity} so we only need to check the additional condition of entropic chaos.
\begin{enumerate}[(i)]
\item Since the function $H(x)=x\log x$ is convex we find that
\begin{equation}\label{eq: limit convex retain entropy I}
H_N\left(C_N \right)\leq (1-\alpha_N)H_N(G_N)+\alpha_N H_N(F_N).
\end{equation}
Thus
\begin{equation}\label{eq: limit convex retain entropy II}
\limsup_{N\rightarrow\infty}\frac{H_N\left(C_N \right)}{N}\leq H(g|\gamma)+\limsup_{N\rightarrow\infty}\alpha_N \cdot\frac{H_N(F_N)}{N}=H(g|\gamma).
\end{equation}
On the other hand, since $C_N$ is $g-$chaotic we have that 
\begin{equation}\label{eq: limit convex retain entropy III}
H(g|\gamma)\leq \liminf_{N\rightarrow\infty}\frac{H_N\left(C_N \right)}{N}
\end{equation}
(see \cite{CCRLV} for the proof). Combining (\ref{eq: limit convex retain entropy II}) and (\ref{eq: limit convex retain entropy III}) yields the desired result.
\item Since the logarithm is an increasing function, and $F_N$ and $G_N$ are non negative we find that
\begin{equation}\label{eq: limit convex destroys entropy I}
\begin{gathered}
H_N\left(C_N \right)=(1-\alpha_N)\int_{\mathbb{S}^{N-1}\left(\sqrt{N} \right)}G_N \log\left((1-\alpha_N)G_N+\alpha_N F_N \right)d\sigma^N\\ + 
\alpha_N\int_{\mathbb{S}^{N-1}\left(\sqrt{N} \right)}F_N \log\left((1-\alpha_N)G_N+\alpha_N F_N \right)d\sigma^N
\\ \geq (1-\alpha_N)\int_{\mathbb{S}^{N-1}\left(\sqrt{N} \right)}G_N \log\left((1-\alpha_N)G_N \right)d\sigma^N\\ 
+ \alpha_N\int_{\mathbb{S}^{N-1}\left(\sqrt{N} \right)}F_N \log\left(\alpha_N F_N \right)d\sigma^N\\ 
=(1-\alpha_N)\log\left(1-\alpha_N \right)+(1-\alpha_N)H_N(G_N)+\alpha_N \log \alpha_N + \alpha_N H_N(F_N).
\end{gathered}
\end{equation}
Thus,
\begin{equation}\label{eq: limit convex destroys entropy II}
\begin{gathered}
\liminf_{N\rightarrow\infty}\frac{H_N\left(C_N \right)}{N} \geq  
H(g|\gamma)+ \liminf_{N\rightarrow\infty}\alpha_N \cdot \frac{H_N(F_N)}{N}.
\end{gathered}
\end{equation}
Since $\liminf_{N\rightarrow\infty}\frac{H_N(F_N)}{N}=\infty$ we can easily pick $\alpha_N$ such that $\liminf_{N\rightarrow\infty}\frac{H_N(C_N)}{N}>C$ for any $C>0$, as well as $C=\infty$. This completes the proof.
\end{enumerate}
\end{proof}
Lemma \ref{lem: limit convex and entropic chaoticity} gives us the tool to find chaotic families that are not entropically chaotic: we only need to find a family of symmetric probability densities $\left\lbrace F_N \right\rbrace_{N\in\mathbb{N}}$ such that $\liminf_{N\rightarrow\infty}\frac{H_N(F_N)}{N}=\infty$. That is exactly what we will do in the following two section. This allows us to prove Theorem \ref{thm: limiting convex combination result}:
\begin{proof}[Proof of Theorem \ref{thm: limiting convex combination result}]
This immediate from Lemma \ref{lem: limit convex and entropic chaoticity} and Theorem \ref{thm: usual suspects are good}.
\end{proof}

\section{First Example: Concentration.}\label{sec: concentration}
Motivated by Lemma \ref{lem: limit convex and entropic chaoticity} and ideas of concentration in \cite{BC}, we now construct the first family of symmetric probability measures on the sphere that has entropic rate of increase that is greater than a linear one. In order to do that we will use the natural coordinates on the sphere.\\
The surface element of a sphere in $\mathbb{R}^k$ with radius $R$, expressed with its spherical angles, $\theta,\phi_1,\dots,\phi_{k-1}$, is given by
\begin{equation}\label{eq: natural sphere coordinates}
ds^{k}_R=kR^{k-1}\sin^{k-2}(\phi_1)\sin^{k-3}(\phi_2)\dots\sin(\phi_{k-1}).
\end{equation}
In particular, if we integrate over a function depending only on the elevation angle, $\phi_1$, we find that
\begin{equation}\label{eq: natural coordinates spherical integration I}
\begin{gathered}
\int_{\mathbb{S}^{k-1}(R)}g(\phi_1)d\sigma=\frac{k}{\left\lvert \mathbb{S}^{k-1} \right\rvert}\\
\cdot\int_0^{2\pi} \int_0^\pi \dots \int_0^\pi g(\phi_1)\sin^{k-2}(\phi_1)\sin^{k-3}(\phi_2)\dots\sin(\phi_{k-1})d\theta d\phi_1\dots d\phi_{k-1}.
\end{gathered}
\end{equation}
Using the formula 
\begin{equation}\label{eq: Beta formula}
B(\xi,\zeta)=2\int_0^{\frac{\pi}{2}}\sin^{2\xi-1}(\theta)\cos^{2\zeta-1}(\theta)d\theta
\end{equation} 
we find that 
\begin{equation}\label{eq: natural coordinates spherical integration II}
\begin{gathered}
\int_0^{\pi}\sin^{k-2}(\phi)d\phi=2\int_0^{\frac{\pi}{2}}\sin^{2\cdot\left(\frac{k-1}{2}\right)-1}(\phi)\cos^{2\cdot\frac{1}{2}-1}d\phi=B\left(\frac{k-1}{2},\frac{1}{2} \right)\\
=\frac{\Gamma \left(\frac{k-1}{2} \right)\sqrt{\pi}}{\Gamma \left(\frac{k}{2} \right)},
\end{gathered}
\end{equation}
leading to
\begin{equation}\label{eq: integration on sphere with angle coordinates}
\int_{\mathbb{S}^{k-1}(R)}g(\phi_1)d\sigma = \frac{\Gamma \left(\frac{k}{2} \right)}{\Gamma \left(\frac{k-1}{2} \right)\sqrt{\pi}}\int_0^\pi g(\phi_1)\sin^{k-2}(\phi_1)d\phi_1.
\end{equation}
We will now construct our first example. Given any probability density, $\varphi$, on $\mathbb{R}$ with $\text{Supp}(\varphi)\subset \left(0,\frac{1}{2} \right)$ we define $\varphi_\epsilon(x)=\frac{1}{\epsilon}\cdot \varphi
 \left(\frac{x}{\epsilon} \right)$ and $b_{\epsilon}(\phi)=\frac{\Gamma \left(\frac{N-1}{2} \right)\sqrt{\pi}}{\Gamma \left(\frac{N}{2} \right)}\cdot \frac{\varphi_{\epsilon}(\phi)}{\sin^{N-2}(\phi)}$. Let 
\begin{equation}\label{eq: 1st counter example}
F_N=\frac{1}{2^N}\sum_{i=1}^{2^N} b_{\epsilon_N}(\xi_i),
\end{equation}
where $\xi_i$ is the elevation angle with respect to a given $i-$th pole (i.e. $v_i=\pm \sqrt{N}$) and $\epsilon_N$ is a sequence converging to zero.
\begin{theorem}\label{thm: 1st counter example}
The family of probability densities $\left\lbrace F_N \right\rbrace_{N\in\mathbb{N}}$ defined in (\ref{eq: 1st counter example}) satisfies
\begin{equation}\label{eq: entropic horror I}
\lim_{N\rightarrow\infty}\frac{H_N(F_N)}{N}=\infty
\end{equation}
for any positive sequence $\left\lbrace\epsilon_N \right\rbrace_{N\in\mathbb{N}}$ that converges to zero.
\end{theorem}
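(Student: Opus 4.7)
My plan is to exploit the fact that $F_N$ is a symmetric mixture of highly concentrated bumps with pairwise (essentially) disjoint supports. A first quick check, using (\ref{eq: integration on sphere with angle coordinates}) together with $\int_0^\pi \varphi_{\epsilon_N}(\phi)\,d\phi = 1$, confirms that each summand $b_{\epsilon_N}(\xi_i)$ is a probability density on the sphere; the normalising constant in the definition of $b_{\epsilon_N}$ is chosen exactly to cancel the $\sin^{N-2}$ Jacobian. Because $\mathrm{Supp}(\varphi)\subset(0,1/2)$, the function $b_{\epsilon_N}(\xi_i)$ vanishes outside the spherical cap of angular radius $\epsilon_N/2$ around the $i$th pole, so that for $N$ sufficiently large (in particular once $\epsilon_N/2$ falls below the minimal angular separation of the chosen poles) these caps are pairwise disjoint and on the support of $b_{\epsilon_N}(\xi_i)$ one has $F_N = \tfrac{1}{2^N}b_{\epsilon_N}(\xi_i)$.

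Granted the disjointness, the symmetry of the sum gives, after a direct computation,
\[
H_N(F_N) \;=\; \int_{\mathbb{S}^{N-1}(\sqrt{N})} b_{\epsilon_N}(\xi)\log b_{\epsilon_N}(\xi)\,d\sigma^N \;-\; N\log 2.
\]
I would then collapse the sphere integral to a one-dimensional one via (\ref{eq: integration on sphere with angle coordinates}), substitute the explicit form of $b_{\epsilon_N}$, and expand the logarithm. This produces three contributions: a $\Gamma$-function prefactor of size $O(\log N)$ (by Stirling), the one-dimensional entropy $\int_0^\pi \varphi_{\epsilon_N}\log\varphi_{\epsilon_N}\,d\phi = H(\varphi)-\log\epsilon_N$, and the crucial term $-(N-2)\int_0^\pi \varphi_{\epsilon_N}(\phi)\log\sin\phi\, d\phi$ originating from the $\sin^{-(N-2)}$ weight in $b_{\epsilon_N}$.

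The core of the argument is estimating this last term. Since $\varphi_{\epsilon_N}$ concentrates in $(0,\epsilon_N/2)$, where $\log\sin\phi = \log\phi + O(\phi^2)$, the change of variables $y = \phi/\epsilon_N$ yields
\[
\int_0^\pi \varphi_{\epsilon_N}(\phi)\log\sin\phi\, d\phi \;=\; \log\epsilon_N \;+\; \int_0^{1/2}\varphi(y)\log y\, dy \;+\; o(1),
\]
so this contribution equals $-(N-2)\log\epsilon_N + O(N)$. Gathering everything and dividing by $N$ gives $H_N(F_N)/N = -\log\epsilon_N + O(1)$, which diverges to $+\infty$ because $\epsilon_N\to 0^+$ forces $-\log\epsilon_N \to +\infty$, independently of the rate. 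The main obstacle is the disjoint-support decomposition: if the $2^N$ poles have to be taken literally then nearest neighbours can be at angular distance $\approx 2/\sqrt{N}$, which does not accommodate slowly decaying $\epsilon_N$ and would require either restricting to an almost-disjoint family of poles or supplementing the argument with a pointwise lower bound $F_N \geq \tfrac{1}{2^N}b_{\epsilon_N}(\xi_i)$ combined with a local cap analysis. A secondary, minor technical point is confirming $\int_0^{1/2}\varphi(y)\log y\,dy$ is finite, which is ensured by any mild integrability assumption on $\varphi$ near the origin.
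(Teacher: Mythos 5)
Your argument is essentially the same as the paper's: reduce $H_N(F_N)$ to the entropy of a single bump using symmetry and disjoint supports, split off $\int\varphi_{\epsilon_N}\log\varphi_{\epsilon_N}$, the $\Gamma$-prefactor, and the $\sin^{-(N-2)}$ term, and isolate the dominant $-(N-1)\log\epsilon_N$ via the scaling $\xi\mapsto\xi/\epsilon_N$. The computation is correct and the conclusion $\liminf H_N(F_N)/N\gtrsim -\log\epsilon_N\to\infty$ matches the paper.

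The obstacle you flag about disjointness is not a real gap but stems from the paper's ambiguous display: the construction uses the poles described by ``$v_i=\pm\sqrt{N}$'', i.e.\ the $2N$ points $\pm\sqrt{N}\,e_i$, whose pairwise angular separation is either $\pi/2$ or $\pi$. The caps of angular radius $\epsilon_N/2$ are therefore pairwise disjoint as soon as $\epsilon_N<\pi$, with no constraint on the decay rate, so no almost-disjoint-family trick or pointwise lower bound is needed; the scenario of $2^N$ poles at angular distance $\approx 2/\sqrt{N}$ does not arise. (The normalisation factor then differs from the paper's $-N\log 2$, but that term is subdominant to $-(N-1)\log\epsilon_N$ and does not affect the conclusion.) Your secondary point is a fair one: the paper implicitly assumes $\int_0^{1/2}\varphi(y)|\log y|\,dy<\infty$, which holds e.g.\ for bounded $\varphi$, and this mild condition should be stated.
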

\begin{proof}
Clearly $F_N$ is symmetric and due to (\ref{eq: integration on sphere with angle coordinates}) and its definition we find that $F_N$ is a probability density. Next we notice that due to symmetry and the fact that $b_{\epsilon_N}(\xi_i)$ are supported on disjoint sets we have that
\begin{equation}\label{eq: entropy of 1st counter example I}
\begin{gathered}
H_N(F_N)=\int_{\mathbb{S}^{N-1}\left( \sqrt{N}\right)}b_{\epsilon_N}(\xi_1)\log \left(\frac{\sum_{i=1}^N b_{\epsilon_N}(\xi_i)}{2^N} \right)d\sigma^N \\ = \int_{\mathbb{S}^{N-1}\left( \sqrt{N}\right)}b_{\epsilon_N}(\xi_1)\log \left(b_{\epsilon_N}(\xi_1) \right)d\sigma^N-N\log 2 \\
=\int_{0}^\pi \varphi_{\epsilon_N}(\xi)\log \left(\varphi_{\epsilon_N}(\xi) \right)d\xi + \log\left( \frac{\Gamma \left(\frac{N-1}{2} \right)\sqrt{\pi}}{\Gamma \left(\frac{N}{2} \right)} \right)\\
-(N-2)\int_{0}^\pi \varphi_{\epsilon_N}(\xi)\log \left(\sin(\xi)\right) d\xi -N\log2
\end{gathered}
\end{equation}

Using a change of variables $\xi=\frac{\xi}{\epsilon_N}$ and the fact that the support of $\varphi$ is in $\left(0,\frac{1}{2} \right)$, we find that for $N$ large enough
\begin{equation}\label{eq: dicomp entropy term I}
\begin{gathered}
\int_{0}^\pi \varphi_{\epsilon_N}(\xi)\log \left(\varphi_{\epsilon_N}(\xi) \right)d\xi
=\int_{0}^\pi \varphi(\xi)\log \left(\varphi(\xi) \right)d\xi-\log \epsilon_N,  
\end{gathered}
\end{equation}
as well as
\begin{equation}\label{eq: dicomp entropy term II}
\begin{gathered}
\int_{0}^\pi \varphi_{\epsilon_N}(\xi)\log \left(\sin(\xi)\right) d\xi
=\int_{0}^\pi \varphi(\xi)\log \left(\sin(\epsilon_N \xi)\right) d\xi \\
= \int_{0}^\pi \varphi(\xi)\log \left(\frac{\sin(\epsilon_N \xi)}{\epsilon_N \xi}\right) d\xi
+\log \epsilon_N +\int_{0}^\pi \varphi(\xi)\log \left(\xi\right) d\xi.
\end{gathered}
\end{equation}
When $N$ is large we find that $0<\frac{\sin(\epsilon_N \xi)}{\epsilon_N \xi}\leq 1$ and so (\ref{eq: dicomp entropy term II}) implies that
\begin{equation}\label{eq: dicomp entropy term III}
\begin{gathered}
\int_{0}^\pi \varphi_{\epsilon_N}(\xi)\log \left(\sin(\xi)\right) d\xi
\leq \log \epsilon_N +\int_{0}^\pi \varphi(\xi)\log \left(\xi\right) d\xi.
\end{gathered}
\end{equation}
Combining (\ref{eq: entropy of 1st counter example I}), (\ref{eq: dicomp entropy term I}), (\ref{eq: dicomp entropy term III}) and the approximation $\frac{\Gamma \left(\frac{N-1}{2} \right)\sqrt{\pi}}{\Gamma \left(\frac{N}{2}\right)}=\sqrt{\frac{2\pi}{N}}\left(1+O\left(\frac{1}{N} \right) \right)$ we find that 
\begin{equation}\label{eq: entropy of 1st counter example II}
\begin{gathered}
H_N(F_N)\geq \int_0^\pi \varphi(\xi)\log \left(\varphi(\xi)\right) d\xi+ \frac{\log\left(2\pi+O\left(\frac{1}{N} \right) \right)}{2}-\frac{\log N}{2}-N\log 2\\
 -(N-2)\int_0^\pi \varphi(\xi)\log \left(\xi\right) d\xi-(N-1)\log \epsilon_N.
\end{gathered}
\end{equation}
Thus
\begin{equation}\label{eq: 1st counter example finished}
\liminf_{N\rightarrow\infty}\frac{H_N(F_N)}{N}\geq \liminf_{N\rightarrow\infty}\left(-\log \epsilon_N\right) -\log 2 -\int_0^\pi \varphi(\xi)\log \left(\xi\right) d\xi
\end{equation}
proving the result.
\end{proof}

\section{Second Example: The Stereographic Projection.}\label{sec: stereographic}
Much like the previous section, we will once again construct a family of probability densities that satisfies $\lim_{N\rightarrow\infty}\frac{H_N(F_N)}{N}$. This time, however, we'd like to try and use $\mathbb{R}^{N-1}$ as our basis for construction and for that we will employ the stereographic projection.\\
Given a function $\zeta(x)$ on $\mathbb{R}^{N-1}$ we define its $i-$th extension to the sphere $\mathbb{S}^{N-1}(R)$ as
\begin{equation}\label{eq: ith extension to the speher}
J_{i,R}(v_1,\dots,v_N)=\frac{\left\lvert \mathbb{S}^{N-1} \right\rvert R^{2N-2}}{\left(R+v_i \right)^{N-1}}\zeta\circ S_i^{-1}(v_1,\dots,v_N),
\end{equation} 
where $S_i$ is the stereographic projection from $\mathbb{R}^{N-1}$ to $\mathbb{S}^{N-1}(R)$ with the $i-$th axis as the axis of symmetry. It is known that under $S_i$ we have
\begin{equation}\label{eq: norm of x^2}
|x|^2+R^2=\frac{2R^3}{R+v_i},
\end{equation}
and
\begin{equation}\label{eq: metric on the sphere}
ds_R=\left(\frac{2R^2}{R^2+|x|^2}\right)dx_1 \dots dx_{N-1}
\end{equation}
(see the Appendix for more information on the standard map with the $N-$th axis of symmetry). \\
We notice the following:
\begin{equation}\label{eq: connection between J and zeta integration}
\int_{\mathbb{S}^{N-1}(R)}J_{i,R}(v_1,\dots,v_N)d\sigma^N_R=\int_{\mathbb{S}^{N-1}(R)}\frac{R^{N-1}}{\left(R+v_i\right)^{N-1}}\cdot\zeta\circ S_i^{-1}(v_1,\dots,v_N)ds^N_R.
\end{equation}
Using (\ref{eq: norm of x^2}) and (\ref{eq: metric on the sphere}) we find that
\begin{equation}\label{eq: integral invariance under sphere extension}
\int_{\mathbb{S}^{N-1}(R)}J_{i,R}(v_1,\dots,v_N)d\sigma^N_R=\int_{\mathbb{R}^{N-1}}\zeta(x_1,\dots,x_{N-1})dx_1 \dots dx_{N-1}.
\end{equation}
Also, we find that
\begin{equation}\label{eq: connection between entropy of J and zeta I}
\begin{gathered}
\int_{\mathbb{S}^{N-1}(R)}J_{i,R}(v_1,\dots,v_N)\log \left(J_{i,R}(v_1,\dots,v_N) \right)d\sigma^N_R\\
=\int_{\mathbb{R}^{N-1}}\zeta(x_1,\dots,x_{N-1})\log \left(\zeta(x_1,\dots,x_{N-1}) \right)dx_1 \dots dx_{N-1}\\
+\int_{\mathbb{R}^{N-1}}\zeta(x_1,\dots,x_{N-1})\log \left(\frac{\left\lvert \mathbb{S}^{N-1} \right\rvert R^{2N-2}}{\left(R+v_i(x) \right)^{N-1}} \right)dx_1 \dots dx_{N-1},
\end{gathered}
\end{equation}
and applying (\ref{eq: norm of x^2} again shows that the last expression above equals to
\begin{equation}\label{eq: connection between entropy of J and zeta II}
\begin{gathered}
\left(\log\left(\left\lvert \mathbb{S}^{N-1} \right\rvert \right)-(N-1)\log(2R)\right)\int_{\mathbb{R}^{N-1}}\zeta(x_1,\dots,x_{N-1})dx_1 \dots dx_{N-1}\\
+(N-1)\int_{\mathbb{R}^{N-1}}\log\left(|x|^2+R^2 \right)\zeta(x_1,\dots,x_{N-1})dx_1 \dots dx_{N-1}.
\end{gathered}
\end{equation}
The approximation $\left\lvert \mathbb{S}^{N-1} \right\rvert=\left(\frac{2\pi}{e} \right)^{\frac{N}{2}}\cdot \frac{1+O\left(\frac{1}{N}\right)}{\sqrt{2\pi}N^{\frac{N-2}{2}}}$ helps us conclude that
\begin{equation}\label{eq: entropy change under sphere extension}
\begin{gathered}
\int_{\mathbb{S}^{N-1}(R)}J_{i,R}(v_1,\dots,v_N)\log \left(J_{i,R}(v_1,\dots,v_N) \right)d\sigma^N_R \\
=\int_{\mathbb{R}^{N-1}}\zeta(x_1,\dots,x_{N-1})\log \left(\zeta(x_1,\dots,x_{N-1}) \right)dx_1 \dots dx_{N-1}\\
+\left(\frac{N}{2}\cdot\log\left(\frac{2\pi}{e}\right)-\frac{N-2}{2}\cdot \log N -\frac{\log\left(2\pi\left(1+O\left(\frac{1}{N} \right) \right)\right)}{2} -(N-1)\log(2R)\right)\\ 
\cdot\int_{\mathbb{R}^{N-1}}\zeta(x_1,\dots,x_{N-1})dx_1 \dots dx_{N-1}+\\
(N-1)\int_{\mathbb{R}^{N-1}}\log\left(|x|^2+R^2 \right)\zeta(x_1,\dots,x_{N-1})dx_1 \dots dx_{N-1}.
\end{gathered}
\end{equation}
Lastly, in the case where $\zeta$ is a probability density on $\mathbb{R}^{N-1}$ (and thus $J_{i,R}$ by equation (\ref{eq: integral invariance under sphere extension})) we find that
\begin{equation}\label{eq: entropy of the extension}
\begin{gathered}
\frac{H_N(J_{i,\sqrt{N}})}{N}=\frac{\int_{\mathbb{R}^{N-1}}\zeta(x_1,\dots,x_{N-1})\log \left(\zeta(x_1,\dots,x_{N-1}) \right)dx_1 \dots dx_{N-1}}{N}\\
+\left(\frac{\log\left(\frac{2\pi}{e}\right)}{2}-\log N+\frac{3\log N}{2N} -\frac{\log\left(2\pi\left(1+O\left(\frac{1}{N} \right) \right)\right)}{2N} -\frac{(N-1)}{N}\log(2)\right)\\
\frac{(N-1)}{N}\cdot\int_{\mathbb{R}^{N-1}}\log\left(|x|^2+N \right)\zeta(x_1,\dots,x_{N-1})dx_1 \dots dx_{N-1}.
\end{gathered}
\end{equation}
The key observation here that all the integrals \emph{but the last one} are invariant under translation, and the last integration can be increased by shifting the bulk of $\zeta$ to infinity.\\
We are now ready to construct our second example: let $\zeta$ be any symmetric probability density on $\mathbb{R}$ that is supported on $[0,1]$. Define 
\begin{equation}\label{eq: definition of zeta_N}
\zeta_N(x_1,\dots,x_{N-1})=\prod_{i=1}^{N-1}\zeta(x_i-\beta_N),
\end{equation}
where $\beta_N$ will be chosen shortly, and
\begin{equation}\label{eq: stereographic example}
F_N(v_1,\dots,v_N)=\frac{\sum_{i=1}^N J_{i,N}(v_1,\dots,v_N)}{N},
\end{equation}
with $J_{i,N}$ defined by (\ref{eq: ith extension to the speher}) with $\zeta=\zeta_N$ and $R=\sqrt{N}$.
\begin{theorem}\label{thm: 2nd counter example}
The family of probability densities $\left\lbrace F_N \right\rbrace_{N\in\mathbb{N}}$ defined in (\ref{eq: stereographic example}) satisfies
\begin{equation}\label{eq: entropic horror II}
\lim_{N\rightarrow\infty}\frac{H_N(F_N)}{N}=\infty
\end{equation}
for any sequence $\left\lbrace\beta_N \right\rbrace_{N\in\mathbb{N}}$ such that $\lim_{N\rightarrow\infty}|\beta_N|=\infty$.
\end{theorem}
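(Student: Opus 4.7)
The plan is to reduce the lower bound on $H_N(F_N)$ to a lower bound on $H_N(J_{1,N})$ by a one-line convexity argument, and then to invoke the explicit formula (\ref{eq: entropy of the extension}) with $\zeta_N$ in place of $\zeta$. Since $F_N = \frac{1}{N}\sum_{i=1}^N J_{i,N}$, the bound $F_N \geq \frac{1}{N}J_{i,N}$ holds pointwise, so $\log F_N \geq \log(J_{i,N}/N)$ on the support of $J_{i,N}$, and with the convention $0\log 0 = 0$,
\begin{equation*}
\int_{\mathbb{S}^{N-1}(\sqrt{N})} J_{i,N}\log F_N\,d\sigma^N \geq \int_{\mathbb{S}^{N-1}(\sqrt{N})} J_{i,N}\log\!\left(\frac{J_{i,N}}{N}\right) d\sigma^N = H_N(J_{i,N}) - \log N,
\end{equation*}
where the equality uses $\int J_{i,N}\,d\sigma^N = 1$ from (\ref{eq: integral invariance under sphere extension}). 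Averaging over $i$ and invoking the permutation symmetry of $\zeta_N$ (which forces $H_N(J_{i,N}) = H_N(J_{1,N})$ for every $i$) gives the reduction $H_N(F_N) \geq H_N(J_{1,N}) - \log N$.

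I would then analyse the three terms on the right-hand side of (\ref{eq: entropy of the extension}) applied with $\zeta = \zeta_N$. The tensor/translation structure $\zeta_N(x) = \prod_{i=1}^{N-1}\zeta(x_i - \beta_N)$ gives $\int \zeta_N \log \zeta_N\,dx = (N-1)\int_{\mathbb{R}}\zeta\log\zeta$, so the first contribution to $H_N(J_{1,N})/N$ is bounded in $N$. The large parenthesis reduces to $-\log N + O(1)$: the dominant piece is $-\log N$, while $\frac{1}{2}\log(2\pi/e)$, $-\frac{N-1}{N}\log 2$, and the remaining corrections are $O(1)$ or $o(1)$. The decisive term is the last one: since $\zeta$ is supported in $[0,1]$, every $x_i$ in the support of $\zeta_N$ lies in $[\beta_N,\beta_N+1]$, giving $|x|^2 \geq (N-1)(|\beta_N|-1)^2$ once $|\beta_N|>1$, and hence
\begin{equation*}
\frac{N-1}{N}\int \log(|x|^2+N)\,\zeta_N\,dx \geq \frac{N-1}{N}\log\!\bigl((N-1)(|\beta_N|-1)^2 + N\bigr) = \log N + 2\log|\beta_N| + O(1).
\end{equation*}

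Assembling the three contributions, the $-\log N$ from the parenthesis cancels the $\log N$ produced by the stereographic weight $|x|^2 + N$, leaving $H_N(J_{1,N})/N \geq 2\log|\beta_N| + O(1)$, and the reduction step then yields
\begin{equation*}
\frac{H_N(F_N)}{N} \geq 2\log|\beta_N| + O(1) - \frac{\log N}{N},
\end{equation*}
which diverges as soon as $|\beta_N|\to\infty$. The one genuinely delicate point is precisely this $\log N$ cancellation: for $\beta_N\equiv 0$ the two logarithms balance exactly and $H_N(J_{1,N})/N$ stays bounded, so the divergence is driven entirely by the translation parameter $\beta_N$ — the "push the bulk to infinity" mechanism alluded to in the paragraph preceding the statement.
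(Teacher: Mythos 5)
Your argument is correct and follows essentially the same route as the paper: first reduce to a lower bound on $H_N(J_{1,N}) - \log N$ via monotonicity of the logarithm and permutation symmetry, then invoke the explicit entropy formula (\ref{eq: entropy of the extension}) with $\zeta = \zeta_N$, the tensor identity (\ref{eq: zeta_N is entropic}), and the support estimate $|x|^2 \geq (N-1)(|\beta_N|-1)^2$. The only cosmetic difference is that you average $\int J_{i,N}\log F_N$ over $i$ while the paper invokes symmetry to work directly with $i=1$; the resulting bounds and the decisive cancellation of $\log N$ against the stereographic weight are identical.
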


\begin{proof}
The first observation we make is since $\zeta_N$ is symmetric in its variables, $J_{i,N}$ is invariant under any change of variables that are not at the $i-$th position (see the Appendix for an explicit formula for $S_i$). Also, by the definition and the symmetry of $\zeta$, we have that
\[J_{i,N}\left(\dots,\underbrace{v_k}_{\text{$i-$th position}},\dots \right)=J_{k,N}\left(\dots,\underbrace{v_i}_{\text{$k-$th position}},\dots\right),\] 
and so, along with equation (\ref{eq: integral invariance under sphere extension}), we conclude that (\ref{eq: stereographic example}) is a symmetric probability density on $\mathbb{S}^{N-1}\left(\sqrt{N} \right)$.\\
The next observation we make is that
\begin{equation}\label{eq: zeta_N is entropic}
\begin{gathered}
\int_{\mathbb{R}^{N-1}}\zeta_N(x_1,\dots,x_{N-1})\log\left(\zeta_N(x_1,\dots,x_{N-1}) \right)dx_1\dots dx_{N-1}\\
=(N-1)\int_{\mathbb{R}}\zeta(x)\log\left(\zeta(x) \right)dx,
\end{gathered}
\end{equation}
and due to symmetry and monotonicity of the logarithm we have that
\begin{equation}\label{eq: evaluation of entropy}
\begin{gathered}
H_N(F_N)=\int_{\mathbb{S}^{N-1}\left(\sqrt{N} \right)} J_{1,N}(v_1,\dots,v_N)\log \left(\frac{\sum_{i=1}^N J_{i,N}(v_1,\dots,v_N)}{N} \right)d\sigma^N \\
\geq H_N(J_{1,N})-\log N.
\end{gathered}
\end{equation}
Combining (\ref{eq: entropy of the extension}), (\ref{eq: zeta_N is entropic}), (\ref{eq: evaluation of entropy}) along with the fact that if $x\in\text{supp}(\zeta_N)$ then $|x|^2\geq N\left(|\beta_N|-1 \right)^2$, we have that
\begin{equation}\label{eq: 2nd counter example finished}
\begin{gathered}
\liminf_{N\rightarrow\infty}\frac{H_N(F_N)}{N}\geq \int_{\mathbb{R}}\zeta(x)\log\left(\zeta(x) \right)dx
+\frac{\log(2\pi)-1}{2}-\log 2 \\
 +\liminf_{N\rightarrow\infty}\left(-\log N + \frac{N-1}{N}\cdot\log\left(N+N\left(|\beta_N|-1 \right)^2 \right)  \right) \\
 =\int_{\mathbb{R}}\zeta(x)\log\left(\zeta(x) \right)dx
+\frac{\log(2\pi)-1}{2}-\log 2 +\liminf_{N\rightarrow\infty}\left(\frac{N-1}{N}\cdot\log\left(1+\left(|\beta_N|-1 \right)^2 \right)  \right),
\end{gathered}
\end{equation}
proving the desired result. 
\end{proof}
The following sections will be of different flavour. We will no longer use the limiting convex combination idea but focus our attention on explicitly computable families of densities on the sphere. 
\section{Marginals of Densities on the Sphere}\label{sec: marginals on the sphere}
In this short section we will mention and prove some simple theorems about integration on the sphere, along with ways to identify marginals and chaoticity.\\
We start with an important Fubini-type formula, whose proof can be found in \cite{Einav1}:
\begin{lemma}\label{lem: fubini type formula on the sphere}
Let $F$ be a continuous function on $\mathbb{S}^{n-1}\left(r \right)$ then
\begin{equation}\label{eq: fubini on the sphere}
\begin{gathered}
\int_{\mathbb{S}^{n-1}\left( r \right)} F d\sigma^k_r=\frac{\left\lvert \mathbb{S}^{n-j-1} \right\rvert}{\left\lvert \mathbb{S}^{n-1} \right\rvert}\cdot \frac{1}{r^{n-2}}\cdot
\int_{\sum_{i=1}^j |v_i|^2 \leq r^2}\left(r^2-\sum_{i=1}^j |v_i|^2 \right)^{\frac{n-j-2}{2}}\\
\left(\int_{\mathbb{S}^{n-j-1}\left(\sqrt{r^2-\sum_{i=1}^j |v_i|^2} \right)}F d\sigma^{n-j}_{\sqrt{r^2-\sum_{i=1}^j |v_i|^2}}\right)dv_1\dots dv_j.
\end{gathered}
\end{equation} 
\end{lemma}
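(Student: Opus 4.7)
The plan is to establish the $j = 1$ case via the coarea formula and then iterate by induction on $j$ to obtain the general statement.

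For the base case $j = 1$, I would view the coordinate $v_1$ as a smooth function on $\mathbb{S}^{n-1}(r)$. Its level set $\{v_1 = t\}$ for $|t| < r$ is precisely the sphere $\mathbb{S}^{n-2}(\sqrt{r^2-t^2})$ sitting inside the affine hyperplane $\{v_1 = t\}$. At any point $v$ of this level set, the tangential gradient of $v_1$ is the projection of $e_1$ onto $T_v \mathbb{S}^{n-1}(r) = v^\perp$, and a direct computation gives that it has norm $\sqrt{r^2-v_1^2}/r$. Applying the coarea formula on $\mathbb{S}^{n-1}(r)$ in its unnormalized surface measure, then converting both sides to the normalized measures $d\sigma^n_r$ and $d\sigma^{n-1}_{\sqrt{r^2-v_1^2}}$ (using the fact that the total surface area of $\mathbb{S}^{k-1}(\rho)$ equals $\rho^{k-1}|\mathbb{S}^{k-1}|$), produces exactly the claimed identity with the weight $(r^2-v_1^2)^{(n-3)/2}$ and prefactor $|\mathbb{S}^{n-2}|/(|\mathbb{S}^{n-1}|\, r^{n-2})$.

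For the inductive step, assume the formula for $j - 1$ in every ambient dimension. I would first peel off $v_1$ using the $j = 1$ case, producing an outer integral over $v_1 \in [-r, r]$ and an inner integral over $\mathbb{S}^{n-2}(\sqrt{r^2-v_1^2})$. I would then apply the induction hypothesis to that inner integral with $n$ replaced by $n-1$, $r$ replaced by $\rho := \sqrt{r^2-v_1^2}$, and $j$ replaced by $j-1$. The factor $1/\rho^{n-3}$ arising from the inner application precisely cancels the outer weight $\rho^{n-3} = (r^2-v_1^2)^{(n-3)/2}$; the intermediate $|\mathbb{S}^{n-2}|$, which appears once in a numerator and once in a denominator, cancels; and the leftover powers of $(r^2 - \sum v_i^2)$ combine to give the advertised exponent $(n-j-2)/2$. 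Fubini then glues the outer $dv_1$ with the inner $dv_2 \cdots dv_j$ into the single integral over the ball $\{\sum_{i=1}^j |v_i|^2 \le r^2\}$.

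The only delicate point is pure bookkeeping: one must track the powers of $\rho$ and the surface-area constants $|\mathbb{S}^{k-1}|$ closely enough to see that they telescope correctly across the two applications. A cleaner one-shot alternative would be to parameterize $\mathbb{S}^{n-1}(r)$ directly by $(v_1,\dots,v_j,\omega) \in B^j(r) \times \mathbb{S}^{n-j-1}$ via $(v_{j+1},\dots,v_n) = \sqrt{r^2 - \sum_{i=1}^j v_i^2}\,\omega$, and compute the Gram determinant of this embedding. The resulting identity is the same, but the Jacobian computation is noticeably longer, so the iterated coarea argument is what I would prefer to write down.
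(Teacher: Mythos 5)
Your proposal is correct, and the bookkeeping checks out. For the record, the paper does not give a proof of this lemma at all — it only says the proof "can be found in \cite{Einav1}" — so there is no in-text argument to compare against. Your coarea-plus-induction route is sound: the tangential gradient computation is right, since at a point $v\in\mathbb{S}^{n-1}(r)$ the tangential part of $e_1$ is $e_1-\tfrac{v_1}{r^2}v$, with squared norm $1-\tfrac{v_1^2}{r^2}$, giving $|\nabla^{\mathrm{tan}}v_1|=\tfrac{\sqrt{r^2-v_1^2}}{r}$; converting between Hausdorff measure and the normalized $\sigma$-measures via $|\mathbb{S}^{k-1}(\rho)|=\rho^{k-1}|\mathbb{S}^{k-1}|$ then produces exactly the advertised weight $(r^2-v_1^2)^{(n-3)/2}$ and prefactor $|\mathbb{S}^{n-2}|/(|\mathbb{S}^{n-1}|r^{n-2})$. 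In the inductive step, after peeling off $v_1$, the inner application (with $n\mapsto n-1$, $j\mapsto j-1$, $r\mapsto\rho:=\sqrt{r^2-v_1^2}$) contributes the exponent $\tfrac{(n-1)-(j-1)-2}{2}=\tfrac{n-j-2}{2}$, a factor $\rho^{-(n-3)}$ that cancels the outer $(r^2-v_1^2)^{(n-3)/2}$, and $|\mathbb{S}^{n-2}|$ in the numerator which cancels with the denominator of the outer prefactor; Fubini then reassembles the iterated integral into the ball $\sum_{i=1}^j|v_i|^2\le r^2$, as you say. The only cosmetic remark is that the vanishing of $|\nabla^{\mathrm{tan}}v_1|$ at $v_1=\pm r$ should be noted as a measure-zero set when dividing by it, but this is harmless. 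Your alternative one-shot parameterization via $(v_1,\dots,v_j,\omega)\mapsto(v_1,\dots,v_j,\sqrt{r^2-\sum v_i^2}\,\omega)$ and a Gram determinant is indeed the other standard route and yields the same identity.
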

An immediate corollary is the following:
\begin{corollary}\label{cor: marginals}
Let $F_N$ be continuous on $\mathbb{S}^{N-1}\left(\sqrt{N} \right)$ then 
\begin{equation}\label{eq: kth marginal}
\begin{gathered}
\Pi_k\left( F_N \right)(v_1,\dots,v_k)=\frac{\left\lvert \mathbb{S}^{N-k-1} \right\rvert}{\left\lvert \mathbb{S}^{N-1} \right\rvert}\cdot \frac{\left(N-\sum_{i=1}^k |v_i|^2 \right)_{+}^{\frac{N-k-2}{2}}}{N^\frac{N-2}{2}}\\
\left(\int_{\mathbb{S}^{N-k-1}\left(\sqrt{N-\sum_{i=1}^k |v_i|^2} \right)}F_N d\sigma^{n-k}_{\sqrt{N-\sum_{i=1}^k |v_i|^2}}\right),
\end{gathered}
\end{equation}
where $f_{+}=\max(f,0)$.
\end{corollary}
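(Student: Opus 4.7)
The plan is to derive the formula directly from Lemma \ref{lem: fubini type formula on the sphere} by testing against an arbitrary bounded continuous function of the first $k$ variables, and then reading off the density.

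First I would recall the defining property of the $k$-th marginal: for every $\phi\in C_b(\mathbb{R}^k)$,
\begin{equation*}
\int_{\mathbb{R}^k}\Pi_k(F_N)(v_1,\dots,v_k)\,\phi(v_1,\dots,v_k)\,dv_1\dots dv_k \;=\; \int_{\mathbb{S}^{N-1}(\sqrt{N})}F_N\,\phi\,d\sigma^N.
\end{equation*}
Applying Lemma \ref{lem: fubini type formula on the sphere} to the continuous function $F_N\cdot\phi$ on $\mathbb{S}^{N-1}(\sqrt{N})$ with $n=N$, $r=\sqrt{N}$, and $j=k$, the right-hand side becomes
\begin{equation*}
\frac{\left\lvert\mathbb{S}^{N-k-1}\right\rvert}{\left\lvert\mathbb{S}^{N-1}\right\rvert}\cdot\frac{1}{N^{\frac{N-2}{2}}}\int_{\sum_{i=1}^k |v_i|^2\leq N}\!\!\left(N-\sum_{i=1}^k|v_i|^2\right)^{\frac{N-k-2}{2}}\!\!\left(\int_{\mathbb{S}^{N-k-1}(r_k)}F_N\,\phi\,d\sigma^{N-k}_{r_k}\right)dv_1\dots dv_k,
\end{equation*}
where I write $r_k=\sqrt{N-\sum_{i=1}^k|v_i|^2}$ for brevity.

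Next, since $\phi$ depends only on $v_1,\dots,v_k$ and these are held fixed in the inner spherical integral, $\phi$ factors out of that inner integral. I would then extend the outer integration to all of $\mathbb{R}^k$ by replacing $(N-\sum|v_i|^2)^{\frac{N-k-2}{2}}$ with $(N-\sum|v_i|^2)_+^{\frac{N-k-2}{2}}$, which legitimately absorbs the indicator of the ball $\{\sum|v_i|^2\leq N\}$ without changing the value. Comparing the resulting expression
\begin{equation*}
\int_{\mathbb{R}^k}\phi(v_1,\dots,v_k)\cdot\frac{\left\lvert\mathbb{S}^{N-k-1}\right\rvert}{\left\lvert\mathbb{S}^{N-1}\right\rvert}\cdot\frac{(N-\sum|v_i|^2)_+^{\frac{N-k-2}{2}}}{N^{\frac{N-2}{2}}}\left(\int_{\mathbb{S}^{N-k-1}(r_k)}F_N\,d\sigma^{N-k}_{r_k}\right)dv_1\dots dv_k
\end{equation*}
with the defining property of $\Pi_k(F_N)$ and using that $\phi$ is arbitrary, I would conclude by a density argument that the claimed identity holds pointwise almost everywhere, and in fact everywhere by the assumed continuity of $F_N$.

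There is no real obstacle here — the argument is essentially an unpacking of Lemma \ref{lem: fubini type formula on the sphere} combined with the definition of the marginal. The only small point worth flagging is the appearance of the positive part $(\cdot)_+$, which simply records that the inner sphere $\mathbb{S}^{N-k-1}(r_k)$ is empty (and hence $\Pi_k(F_N)$ vanishes) whenever $\sum_{i=1}^k|v_i|^2>N$; this is automatic from the support of the Fubini decomposition and requires no separate justification.
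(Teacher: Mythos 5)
Your proof is correct and is precisely the argument the paper has in mind: the paper labels this an "immediate corollary" of Lemma \ref{lem: fubini type formula on the sphere}, and your write-up simply unpacks why — test against $\phi\in C_b(\mathbb{R}^k)$, apply the Fubini formula with $n=N$, $r=\sqrt{N}$, $j=k$, pull $\phi$ out of the inner spherical integral, and read off the density. The handling of $(\cdot)_+$ as recording the support constraint $\sum_{i=1}^k|v_i|^2\leq N$ is also exactly the intended reading.
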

Next, we prove a simple technical lemma that will be very useful in determining when a family of probability densities is chaotic.
\begin{lemma}\label{lem: easy weak convergence}
Let $\left\lbrace f_n \right\rbrace_{N\in\mathbb{N}}$ be a sequence of non-negative function on $\mathbb{R}^k$ that converges pointwise to a function $f\in L^1\left(\mathbb{R}^k \right)$. If in addition, 
\[\lim_{n\rightarrow\infty}\int_{\mathbb{R}^k}f_n(x_1,\dots,x_k)dx_1\dots dx_k=\int_{\mathbb{R}^k}f(x_1,\dots,x_k)dx_1\dots dx_k,\]
then $f_n\in L^1\left(\mathbb{R}^k \right)$ from a certain $n_0\in\mathbb{N}$, and $\left\lbrace f_n \right\rbrace_{N\in\mathbb{N}}$ converges to $f$ in $L^1\left(\mathbb{R}^k \right)$. 
\end{lemma}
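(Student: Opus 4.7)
The plan is to exploit the classical Scheffé-type identity for non-negative reals: whenever $a,b \geq 0$ one has
\begin{equation*}
|a-b| = a + b - 2\min(a,b).
\end{equation*}
Since every $f_n$ and $f$ are non-negative, this applies pointwise to give
\begin{equation*}
|f_n(x) - f(x)| = f_n(x) + f(x) - 2\min(f_n(x), f(x)).
\end{equation*}
The point of this decomposition is that it isolates the $L^1$ norm of the difference in terms of integrals of the individual functions and the integral of the $\min$, with no cancellation issues that could spoil a direct pointwise argument.

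First I would dispose of the $L^1$ membership claim. Since $f \in L^1(\mathbb{R}^k)$, the limit $\int f_n \to \int f$ is a finite real number, and in particular the sequence of integrals is bounded; this gives $f_n \in L^1(\mathbb{R}^k)$ for $n$ large enough (in fact, once $\int f_n < \int f + 1$, say). Next I would integrate the pointwise identity above, which is legitimate for such $n$, to obtain
\begin{equation*}
\int_{\mathbb{R}^k} |f_n - f|\, dx = \int_{\mathbb{R}^k} f_n\, dx + \int_{\mathbb{R}^k} f\, dx - 2\int_{\mathbb{R}^k} \min(f_n, f)\, dx.
\end{equation*}

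The key step is to control the $\min$ term by dominated convergence. We have the pointwise bound $0 \leq \min(f_n(x), f(x)) \leq f(x)$, where $f \in L^1(\mathbb{R}^k)$ serves as an integrable dominant. Moreover, since $f_n(x) \to f(x)$ pointwise, continuity of the $\min$ function gives $\min(f_n(x), f(x)) \to \min(f(x), f(x)) = f(x)$ pointwise. The dominated convergence theorem then yields $\int \min(f_n, f)\, dx \to \int f\, dx$. Combined with the hypothesis $\int f_n \to \int f$, the displayed identity gives
\begin{equation*}
\int_{\mathbb{R}^k} |f_n - f|\, dx \longrightarrow \int f + \int f - 2\int f = 0,
\end{equation*}
which is exactly $L^1$ convergence.

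There is no serious obstacle here; the only subtlety is that one cannot apply dominated convergence directly to $|f_n - f|$ because there is no obvious integrable dominant (the $f_n$'s could be large but their mass offsets the loss elsewhere). The Scheffé identity circumvents this by replacing $|f_n - f|$ with a quantity that is automatically dominated by $f$, which is the whole point of the argument.
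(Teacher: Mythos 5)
Your proof is correct, but it takes a genuinely different route from the paper's. The paper defines $g_n = f_n + f$ and $g = 2f$, observes that $|f_n - f| \le g_n$, $g_n \to g$ pointwise, $\int g_n \to \int g$, and $|f_n - f| \to 0$ pointwise, and then invokes the \emph{generalized} dominated convergence theorem (where the dominating function is allowed to vary with $n$) to conclude $\int |f_n - f| \to 0$. You instead use the elementary identity $|a-b| = a + b - 2\min(a,b)$ for nonnegative $a,b$, reducing the problem to applying the \emph{ordinary} dominated convergence theorem to $\min(f_n, f)$, which is dominated by the fixed integrable function $f$. Both are standard proofs of Scheff\'e's lemma. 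The paper's version is terser once you accept the generalized DCT as a black box; your version is more self-contained, since it only uses the classical DCT, and the algebraic identity makes the absence of cancellation issues explicit. One small stylistic point: your justification that $f_n \in L^1$ eventually (``the sequence of integrals is bounded'') is slightly backwards --- the hypothesis that $\int f_n$ converges to a finite limit already presupposes that these integrals are eventually finite real numbers, so the $L^1$ membership is immediate from the hypothesis rather than a consequence of boundedness; this does not affect the validity of the argument.
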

\begin{proof}
It is easy to see that due to the conditions of the Lemma we have that $f$ is non-negative and that $f_n \in L^1\left(\mathbb{R}^k \right)$ from a certain $n_0$. Without loss of generality we can assume that $n_0=1$. Define 
\begin{equation}\label{eq: def g_n and g}
g_n=f_n+f, \quad g=2f.
\end{equation}
Clearly $g,g_n \geq 0$, $g,g_n \in L^1 \left(\mathbb{R}^k \right)$, $g_n$ converges to $g$ pointwise and 
\begin{equation}\label{eq: int of g_n converges to int of g}
\begin{gathered}
\lim_{n\rightarrow\infty}\int_{\mathbb{R}^k}g_n(x_1,\dots,x_k)dx_1\dots dx_k=\int_{\mathbb{R}^k}g(x_1,\dots,x_k)dx_1\dots dx_k.
\end{gathered}
\end{equation}
Since $\left\lvert f_n-f\right\rvert \leq g_n$ and $\left\lvert f_n-f\right\rvert$ converges pointwise to zero, we conclude the desired result from Lebesgue's generalised dominated convergence theorem. 
\end{proof}

From the above lemma we can deduce the following:
\begin{corollary}\label{cor: marginal and chaoticity}
Let $\left\lbrace F_N \right\rbrace_{N\in\mathbb{N}}$ be a sequence of probability densities on Kac's sphere. If there exists a probability density function, $f$, on $\mathbb{R}$ such that
\[\lim_{N\rightarrow\infty}\Pi_k \left(F_N \right)=f^{\otimes k}\]
pointwise for all $k\in\mathbb{N}$, then $F_N$ is $f-$chaotic.
\end{corollary}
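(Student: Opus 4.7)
The plan is to apply Lemma \ref{lem: easy weak convergence} to the sequence of marginals $\Pi_k(F_N)$ for each fixed $k$, deducing $L^1$ convergence from pointwise convergence together with matching integrals, and then to extract weak convergence against bounded continuous test functions as an immediate consequence.

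First I would verify that, for each $N$ and $k$, the marginal $\Pi_k(F_N)$ viewed as a function on $\mathbb{R}^k$ via the explicit representation in Corollary \ref{cor: marginals} is a probability density with respect to Lebesgue measure. Indeed, using the Fubini-type formula (Lemma \ref{lem: fubini type formula on the sphere}) with $j=k$, integrating $\Pi_k(F_N)$ over $\mathbb{R}^k$ reconstructs $\int_{\mathbb{S}^{N-1}(\sqrt{N})} F_N\,d\sigma^N = 1$. Since $f$ is a probability density on $\mathbb{R}$, the tensor power $f^{\otimes k}$ is a probability density on $\mathbb{R}^k$. Consequently,
\[
\lim_{N\to\infty}\int_{\mathbb{R}^k}\Pi_k(F_N)\,dv_1\cdots dv_k \;=\; 1 \;=\; \int_{\mathbb{R}^k} f^{\otimes k}\,dv_1\cdots dv_k.
\]
Combined with the hypothesized pointwise convergence $\Pi_k(F_N) \to f^{\otimes k}$, and the obvious non-negativity of the marginals, this places us exactly in the setting of Lemma \ref{lem: easy weak convergence}, which upgrades pointwise convergence to convergence in $L^1(\mathbb{R}^k)$.

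Once $L^1$ convergence is in hand, $f$-chaoticity follows in one line. For any $\phi \in C_b(\mathbb{R}^k)$,
\[
\left| \int_{\mathbb{R}^k} \Pi_k(F_N)\,\phi\, dv - \int_{\mathbb{R}^k} f^{\otimes k}\,\phi\, dv \right| \;\leq\; \|\phi\|_\infty \, \|\Pi_k(F_N) - f^{\otimes k}\|_{L^1(\mathbb{R}^k)} \underset{N\to\infty}{\longrightarrow} 0,
\]
and since this holds for every $k \in \mathbb{N}$, the sequence $\{F_N\}_{N\in\mathbb{N}}$ is $f$-chaotic in the sense of Definition \ref{def: chaoticity}.

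There is no genuine obstacle here; the only mild bookkeeping is confirming via Corollary \ref{cor: marginals} and Lemma \ref{lem: fubini type formula on the sphere} that the marginals, regarded as functions on $\mathbb{R}^k$, integrate to $1$. After that observation, the statement is a one-line application of Lemma \ref{lem: easy weak convergence} followed by the trivial $L^1$-versus-$L^\infty$ duality, so strictly speaking the corollary is a purely formal consequence of the preceding lemma.
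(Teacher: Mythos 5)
Your proof is correct and follows exactly the paper's route: verify that $\int_{\mathbb{R}^k}\Pi_k(F_N)\,dv = 1 = \int_{\mathbb{R}^k}f^{\otimes k}\,dv$, invoke Lemma \ref{lem: easy weak convergence} to upgrade pointwise convergence to $L^1$ convergence, and conclude weak convergence against $C_b(\mathbb{R}^k)$. The paper's own proof is a compressed version of precisely this argument.
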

\begin{proof}
This follows immediately from Lemma \ref{lem: easy weak convergence} and the fact that
\begin{equation}\label{eq: int convergence of marginals to f^otimes k}
\begin{gathered}
\int_{\mathbb{R}^k}\Pi_k \left(F_N \right)(v_1,\dots,v_k)dv_1 \dots dv_k=\int_{\mathbb{S}^{N-1}\left(\sqrt{N} \right)}F_N d\sigma^N\\
=1=\int_{\mathbb{R}^k}f^{\otimes k}(v_1,\dots,v_k)dv_1 \dots dv_k
\end{gathered}
\end{equation}
For all $k,N\in\mathbb{N}$.
\end{proof}
Armed with our new tools, we are now ready to give two more examples of chaotic families that are not entropically chaotic.

\section{Third Example: An Escaping Tensorisation.}\label{sec: escaping tensorisation}
The third example we'll construct has the intuitive form of a tensorised product restricted to the sphere with one major difference: The underlying one dimensional function depends on $N$ in such a way that the family will lose part of its support at infinity, ruining the entropic chaoticity. Most of the computations presented in this section are taken from the author's previous work \cite{Einav1}, but a few will be repeated for the sake of completion.\\
Our family of interest is defined by (\ref{eq: usual suspect}) where $f_N(v)=\delta_N M_{\frac{1}{2\delta_N}}(v)+(1-\delta_N) M_{\frac{1}{2(1-\delta_N)}}(v)$ with $M_a(v)=\frac{e^{-\frac{v^2}{2a}}}{\sqrt{2 \pi a}}$ and $\delta_N=\frac{1}{N^{\eta}}$, $\eta$ close to $1$. Defining the normalization function as
\begin{equation}\label{eq: normalization function def}
\mathcal{Z}_N(f_N,\sqrt{r})=\int_{\mathbb{S}^{N-1}(r)}\prod_{i=1}^N f_N(v_i)d\sigma^N_r,
\end{equation}
we see that
\begin{equation}\label{eq: 3rd counter example def}
F_N(v_1,\dots,v_N)=\frac{\prod_{i=1}^N f_N(v_i)}{\mathcal{Z}_N(f_N,\sqrt{N})}.
\end{equation}
The goal of this section is to prove Theorem \ref{thm: entropic fail on usual suspect}, showing that $\left\lbrace F_N \right\rbrace_{N\in\mathbb{N}}$ is chaotic, but not entropically chaotic. In order to do that we require a few additional computations and technical lemmas, first amongst them is an explicit asymptotic expression to the normalization function $\mathcal{Z}_N$. This part is quite lengthy and technical and is fully proved in \cite{Einav1}. As such, we will content ourselves with stating the final result:
\begin{lemma}\label{lem: approximation lemma}
Let $\mathcal{Z}_N$ defined as in (\ref{eq: normalization function def}), then
\begin{equation}\label{eq: normalization approximation}
\begin{gathered}
\mathcal{Z}_N(f_N,\sqrt{u})=\frac{2}{\sqrt{N}\Sigma_N \left\lvert \mathbb{S}^{N-1} \right\rvert|u|^{\frac{N-2}{2}}}\left(\frac{e^{-\frac{(u-N)^2}{2N\Sigma^2_N}}}{\sqrt{2\pi}}+\lambda_N(u) \right),
\end{gathered}
\end{equation}
where $\Sigma_N^2=\frac{3}{4\delta_N(1-\delta_N)}-1$ and $\lim_{N\rightarrow\infty}\left(\sup_u \left\lvert \lambda_N(u) \right\rvert\right)=0$.
\end{lemma}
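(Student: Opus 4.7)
The plan is to reduce the asymptotics of $\mathcal{Z}_N(f_N, \sqrt{u})$ to a uniform local central limit theorem (LCLT) for a triangular array of sums of i.i.d. random variables, and then to verify the hypotheses of that LCLT by exploiting the explicit Gaussian-mixture form of $f_N$.

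First, an elementary coarea identification. If $v_1, \dots, v_N$ are i.i.d.\ with common density $f_N$ and $S_N := v_1^2 + \dots + v_N^2$, then writing $\int_{\mathbb{R}^N} G(|v|^2)\prod_i f_N(v_i)\,dv$ in polar form and substituting $u = r^2$ yields the identification
\begin{equation*}
p_{S_N}(u) \;=\; \frac{|\mathbb{S}^{N-1}|\, u^{(N-2)/2}}{2}\, \mathcal{Z}_N(f_N, \sqrt{u}).
\end{equation*}
The claimed asymptotic formula for $\mathcal{Z}_N$ is therefore equivalent to the LCLT statement
\begin{equation*}
p_{S_N}(u) \;=\; \frac{1}{\sqrt{N}\,\Sigma_N}\Big(\tfrac{1}{\sqrt{2\pi}}\,e^{-(u-N)^2/(2N\Sigma_N^2)} + \lambda_N(u)\Big), \qquad \lim_{N\to\infty}\sup_u|\lambda_N(u)| = 0.
\end{equation*}

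Next, the relevant moments of $v^2$ under $f_N$ are immediate: since $M_a$ satisfies $\mathbb{E}[v^2]=a$ and $\mathbb{E}[v^4]=3a^2$, direct computation gives $\mathbb{E}[v^2]=1$ and $\mathbb{E}[v^4]=\tfrac{3}{4\delta_N(1-\delta_N)}$, hence $\mathrm{Var}(v^2) = \Sigma_N^2$. In particular $S_N$ has mean $N$ and variance $N\Sigma_N^2$, precisely matching the Gaussian appearing in the lemma.

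The LCLT itself is attacked via Fourier inversion. Using the identity $\int M_a(v)e^{i\tau v^2}\,dv = (1-2ia\tau)^{-1/2}$, one obtains
\begin{equation*}
\varphi_N(\tau) := \mathbb{E}[e^{i\tau v^2}] = \frac{\delta_N}{\sqrt{1-i\tau/\delta_N}} + \frac{1-\delta_N}{\sqrt{1-i\tau/(1-\delta_N)}},
\end{equation*}
and $p_{S_N}(u) = \tfrac{1}{2\pi}\int_{\mathbb{R}} e^{-i\tau u}\varphi_N(\tau)^N\,d\tau$. On a small (shrinking) neighborhood of the origin, Taylor expansion of $\log\varphi_N$ gives $\varphi_N(\tau)^N \approx e^{iN\tau - N\Sigma_N^2\tau^2/2}$, whose inverse Fourier transform delivers exactly the Gaussian main term. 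The main obstacle is controlling the tail $|\tau|\geq \epsilon_N$: one must verify that $|\varphi_N|$ is uniformly bounded away from $1$ there and that $\int_{|\tau|\geq \epsilon_N} |\varphi_N(\tau)|^N\,d\tau = o\big(1/(\sqrt{N}\,\Sigma_N)\big)$ uniformly in $u$. This is delicate because the $\delta_N M_{1/(2\delta_N)}$ component has variance growing like $N^\eta$, so $\varphi_N(\tau)$ decays only like $|\tau|^{-1/2}$ for large $|\tau|$ and its behavior at intermediate scales must be estimated directly from the explicit algebraic form rather than from an off-the-shelf LCLT. This uniform tail analysis is the technical core of the argument, and is precisely the lengthy computation deferred to \cite{Einav1}.
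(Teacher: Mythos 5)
Your proposal is correct in outline and matches the approach of the cited reference: in the present paper the author explicitly declines to prove this lemma, writing that the argument is "quite lengthy and technical and is fully proved in \cite{Einav1}," and the proof in \cite{Einav1} is indeed a local central limit theorem for $S_N = \sum_i v_i^2$ established via Fourier inversion, exactly as you describe. Your preliminary reductions are all accurate: the coarea/polar identity $p_{S_N}(u) = \tfrac{1}{2}\lvert\mathbb{S}^{N-1}\rvert\, u^{(N-2)/2}\,\mathcal{Z}_N(f_N,\sqrt{u})$ is correct (using that $d\sigma^N_r$ is the uniform probability measure and $|\nabla\sum v_i^2| = 2r$), the moment computation giving $\mathbb{E}[v^2]=1$ and $\mathrm{Var}(v^2)=\Sigma_N^2$ is right, and the characteristic function identity $\int M_a\,e^{i\tau v^2}\,dv = (1-2ia\tau)^{-1/2}$ is correct. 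You also correctly identify the genuine difficulty — a uniform LCLT for a triangular array whose summands have variance diverging like $N^\eta$, so the tail bound $\int_{|\tau|\geq\epsilon}|\varphi_N(\tau)|^N\,d\tau = o\bigl(1/(\sqrt{N}\Sigma_N)\bigr)$ and the control of the third moment in the Taylor expansion (which grows like $N^{2\eta}$ but is absorbed by the $N^{3/2}\Sigma_N^3 \sim N^{(3+3\eta)/2}$ normalization when $\eta<1$) must be checked directly from the explicit algebraic form of $\varphi_N$. Since the paper itself defers the full argument, your proposal supplies as much as the paper does, with the same structure.
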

Using this approximation we can now discuss the chaoticity of $F_N$.

\begin{lemma}\label{lem: tensorised F_N is chaotic}
The family of probability densities, $\left\lbrace F_N \right\rbrace_{N\in\mathbb{N}}$ is $M_{\frac{1}{2}}-$chaotic.
\end{lemma}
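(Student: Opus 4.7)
The plan is to verify the hypotheses of Corollary \ref{cor: marginal and chaoticity}, i.e.\ to show pointwise convergence of every $k$-marginal $\Pi_k(F_N)$ to $M_{1/2}^{\otimes k}$. Using Corollary \ref{cor: marginals} together with the product structure of $F_N$, and integrating out $v_{k+1},\dots,v_N$ on the constrained sphere (which, by the definition of $\mathcal{Z}_{N-k}$, produces exactly $\mathcal{Z}_{N-k}(f_N,\sqrt{N-S_k})$), one gets the exact identity
\[
\Pi_k(F_N)(v_1,\dots,v_k)=\left(\prod_{i=1}^k f_N(v_i)\right) A_N(v_1,\dots,v_k),
\]
with $S_k:=\sum_{i=1}^k v_i^2$ and
\[
A_N=\frac{|\mathbb{S}^{N-k-1}|}{|\mathbb{S}^{N-1}|}\cdot\frac{(N-S_k)_+^{(N-k-2)/2}}{N^{(N-2)/2}}\cdot\frac{\mathcal{Z}_{N-k}(f_N,\sqrt{N-S_k})}{\mathcal{Z}_N(f_N,\sqrt{N})}.
\]
The problem therefore splits into two independent tasks: (a) show $\prod_i f_N(v_i)\to M_{1/2}^{\otimes k}$ pointwise, and (b) show $A_N\to 1$ pointwise.

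Step (a) is elementary. Since $\delta_N=N^{-\eta}\to 0$, a direct computation from $M_a(v)=\frac{1}{\sqrt{2\pi a}}e^{-v^2/(2a)}$ yields $\delta_N M_{1/(2\delta_N)}(v)=\frac{\delta_N^{3/2}}{\sqrt{\pi}}e^{-\delta_N v^2}\to 0$ while $(1-\delta_N)M_{1/(2(1-\delta_N))}(v)\to \frac{1}{\sqrt{\pi}}e^{-v^2}=M_{1/2}(v)$ pointwise in $v$. Hence $f_N\to M_{1/2}$ pointwise, and the finite product converges accordingly.

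For step (b) I invoke Lemma \ref{lem: approximation lemma} at $u=N$, giving $\mathcal{Z}_N(f_N,\sqrt{N})$ with Gaussian factor $1/\sqrt{2\pi}$, and its analogue at dimension $N-k$ with the same $f_N$ (produced by the identical local-CLT argument of \cite{Einav1}, since $N-k\to\infty$ and the characteristic function of $v^2$ under $f_N$, hence $\Sigma_N$, is unchanged), giving a Gaussian factor $\frac{1}{\sqrt{2\pi}}\exp\!\bigl(-(S_k-k)^2/(2(N-k)\Sigma_N^2)\bigr)$. The prefactors $|\mathbb{S}^{N-k-1}|/|\mathbb{S}^{N-1}|$, the $\sqrt{\cdot}$ dimensional factors, and the powers $N^{(N-2)/2}$ and $(N-S_k)^{(N-k-2)/2}$ all line up exactly with those appearing in $A_N$, producing a clean cancellation that leaves
\[
A_N=\sqrt{\tfrac{N}{N-k}}\cdot\frac{\tfrac{1}{\sqrt{2\pi}}\,e^{-(S_k-k)^2/(2(N-k)\Sigma_N^2)}+\lambda_{N-k}(N-S_k)}{\tfrac{1}{\sqrt{2\pi}}+\lambda_N(N)}.
\]
Since $\Sigma_N^2=\tfrac{3}{4\delta_N(1-\delta_N)}-1\to\infty$ as $\delta_N\to 0$, the exponent tends to $0$; the uniform smallness of the $\lambda_N,\lambda_{N-k}$ given by Lemma \ref{lem: approximation lemma} then forces $A_N\to 1$ pointwise. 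Combining with (a) we obtain $\Pi_k(F_N)(v_1,\dots,v_k)\to M_{1/2}^{\otimes k}(v_1,\dots,v_k)$ for each fixed $(v_1,\dots,v_k)$, and Corollary \ref{cor: marginal and chaoticity} concludes the $M_{1/2}$-chaoticity.

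The only real obstacle is the bookkeeping required to apply Lemma \ref{lem: approximation lemma} at dimension $N-k$ while keeping the parameter index $N$ in $f_N$ fixed; once one recognises that the proof in \cite{Einav1} is a local central limit theorem whose ingredients (boundedness of moments of $v^2$ under $f_N$, non-degeneracy of $\Sigma_N^2$, and $N-k\to\infty$) still hold, the extension is mechanical and the rest of the argument is pure algebraic cancellation plus the divergence of $\Sigma_N^2$.
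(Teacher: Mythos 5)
Your proposal is correct and follows essentially the same route as the paper: express $\Pi_k(F_N)$ via Corollary \ref{cor: marginals}, plug in Lemma \ref{lem: approximation lemma} for both $\mathcal{Z}_N(f_N,\sqrt{N})$ and $\mathcal{Z}_{N-k}(f_N,\sqrt{N-S_k})$, observe the algebraic cancellation, and then use $\Sigma_N^2\to\infty$ together with the uniform vanishing of the error terms $\lambda_{N-k},\lambda_N$ and the pointwise convergence $f_N\to M_{1/2}$ to conclude via Corollary \ref{cor: marginal and chaoticity}. The one thing you flag explicitly --- that Lemma \ref{lem: approximation lemma} must be re-read with the particle number $N-k$ but the density still being $f_N$ (so $\Sigma_N$, not $\Sigma_{N-k}$, appears) --- is a point the paper uses silently by writing $\lambda_{N-k}$ and $\Sigma_N$ in its displayed marginal formula; your making it explicit is a clarification of the same argument, not a deviation from it.
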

\begin{proof}
Using Corollary \ref{cor: marginals} and the definition of the normalization function we find that
\begin{equation}\label{eq: kth marginal of the tensorisation I}
\begin{gathered}
\Pi_k\left( F_N \right)(v_1,\dots,v_k)=\frac{\left\lvert \mathbb{S}^{N-k-1} \right\rvert}{\left\lvert \mathbb{S}^{N-1} \right\rvert}\cdot \frac{\left(N-\sum_{i=1}^k |v_i|^2 \right)_{+}^{\frac{N-k-2}{2}}}{N^\frac{N-2}{2}}\\
\cdot\frac{\mathcal{Z}_{N-k}\left(f_N,\sqrt{N-\sum_{i=1}^k |v_i|^2} \right)}{\mathcal{Z}_N(f_N,\sqrt{N})}\cdot \left(\prod_{I=1}^k f_N(v_i)\right).
\end{gathered}
\end{equation}
Combining this with (\ref{eq: normalization approximation}) yields
\begin{equation}\label{eq: kth marginal of the tensorisation II}
\begin{gathered}
\Pi_k\left( F_N \right)(v_1,\dots,v_k)=\sqrt{\frac{N}{N-k}}\cdot \frac{e^{-\frac{\left(k-\sum_{i=1}^k |v_i|^2 \right)^2}{2(N-k)\Sigma_N^2}}+\lambda_{N-k}\left(N-\sum_{i=1}^k |v_i|^2 \right)}{1+\lambda_N(N)}\\
\left(\prod_{I=1}^k f_N(v_i)\right)\chi_{\sum_{i=1}^k |v_i|^2 \leq N}(v_1,\dots,v_k).
\end{gathered}
\end{equation}
From Lemma \ref{lem: approximation lemma} we see that $\lim_{N\rightarrow\infty}\left(\sup |\lambda_{N-j}| \right)=0$ for any fixed $j$, and by its definition and our choice of $\delta_N$ we have that $\lim_{N\rightarrow\infty}\Sigma_N^2=\infty$. We conclude that
\begin{equation}\label{eq: pointwise limit of kth marginal}
\lim_{N\rightarrow\infty}\Pi_k \left(F_N \right)(v_1,\dots,v_k)=M_{\frac{1}{2}}^{\otimes k}(v_1,\dots,v_k)
\end{equation}
pointwise, as $f_N$ clearly converges to $M_{\frac{1}{2}}$ pointwise. This is enough to prove the desired result due to Corollary \ref{cor: marginal and chaoticity}.
\end{proof}
Next, we compute the rescaled $N-$particle entropy of $F_N$.
\begin{lemma}\label{lem: computation of the entropy}
\begin{equation}
\lim_{N\rightarrow\infty}\frac{H_N(F_N)}{N}=\frac{\log 2}{2}.
\end{equation}
\end{lemma}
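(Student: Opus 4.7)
The plan is to exploit the tensorised structure of $F_N$. Writing $\log F_N = \sum_{i=1}^N \log f_N(v_i) - \log \mathcal{Z}_N(f_N,\sqrt N)$ and invoking the symmetry of $F_N$ gives
\begin{equation*}
H_N(F_N) = N \int_\mathbb{R} \Pi_1(F_N)(v_1) \log f_N(v_1)\, dv_1 \; - \; \log \mathcal{Z}_N(f_N, \sqrt N),
\end{equation*}
so the task splits into two scalar asymptotics.

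For the normalisation, I will evaluate Lemma \ref{lem: approximation lemma} at $u=N$, take logarithms, and apply Stirling's formula to $|\mathbb{S}^{N-1}|$. Since $\Sigma_N^2 \sim \tfrac{3}{4\delta_N} = \tfrac34 N^{\eta}$, the term $\log\Sigma_N$ is of order $\log N$ and so $\log\Sigma_N/N \to 0$; likewise $\log\bigl(\tfrac{1}{\sqrt{2\pi}}+\lambda_N(N)\bigr) \to -\tfrac12\log(2\pi)$, and all the $\log N$ contributions cancel to leading order. The outcome should be
\begin{equation*}
-\frac{\log \mathcal{Z}_N(f_N,\sqrt N)}{N} \; \longrightarrow \; \tfrac12\log(2\pi) + \tfrac12.
\end{equation*}

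For the first term, I will substitute the explicit expression for $\Pi_1(F_N)$ from (\ref{eq: kth marginal of the tensorisation II}) and show that the integral tends to $\int_\mathbb{R} M_{1/2} \log M_{1/2}\, dv = -\tfrac12\log\pi - \tfrac12$. The Gaussian factor $e^{-(1-v_1^2)^2/(2(N-1)\Sigma_N^2)}$ is bounded by $1$ and converges pointwise to $1$ (since $\Sigma_N \to \infty$); $\sup_u|\lambda_{N-1}(u)| \to 0$; and $f_N \to M_{1/2}$ pointwise. The core task is therefore to show that $\int f_N \log f_N\,dv \to \int M_{1/2}\log M_{1/2}\,dv$. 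Writing $f_N = (1-\delta_N)M_{1/(2(1-\delta_N))} + \delta_N M_{1/(2\delta_N)}$, the two Gaussians have variances $\tfrac12$ and $\tfrac{N^\eta}{2}$ and become asymptotically well separated in a natural ``component label'' sense, so that the differential entropy satisfies $H(f_N) \to H(M_{1/2}) = \tfrac12\log(\pi e)$; the wide component contributes only $O(\delta_N|\log \delta_N|) = o(1)$, as one checks using $\int v_1^2 M_{1/(2\delta_N)}\,dv_1 = \tfrac{1}{2\delta_N}$ and the binary-entropy bound $H_b(\delta_N) = O(\delta_N|\log\delta_N|)$. A dominated-convergence (or Vitali) argument then promotes this pointwise statement to the full integral against $\Pi_1(F_N)$, the extra factor $\sqrt{N/(N-1)}/(1+\lambda_N(N))$ tending to $1$.

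Combining the two pieces yields
\begin{equation*}
\frac{H_N(F_N)}{N} \; \longrightarrow \; \Bigl(-\tfrac12\log\pi - \tfrac12\Bigr) + \Bigl(\tfrac12\log(2\pi) + \tfrac12\Bigr) = \frac{\log 2}{2},
\end{equation*}
as claimed. The main obstacle is the integral limit in the second step: although pointwise convergence of the integrand is straightforward, securing a fixed integrable majorant for $f_N|\log f_N|$ uniform in $N$ is delicate because of the spreading Gaussian $M_{1/(2\delta_N)}$ inside $f_N$. I would address this by splitting the integration domain into a bulk region $|v_1| \lesssim \sqrt{\log N}$, where the narrow component dominates and standard Gaussian estimates apply, and a tail region, where the wide component dominates but carries only total mass $\delta_N$; combining this splitting with the a priori identity $\int v_1^2 f_N\,dv_1 = 1$ delivers the uniform integrability needed to pass to the limit.
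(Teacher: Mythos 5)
Your proposal is essentially the paper's own argument: both exploit the tensorised form to split $H_N(F_N)/N$ into $\int\Pi_1(F_N)\log f_N$ plus $-\log\mathcal{Z}_N(f_N,\sqrt N)/N$, evaluate the normalisation term via Lemma \ref{lem: approximation lemma} and Stirling, and pass to the limit in the integral by pointwise convergence of the integrand together with a generalised dominated convergence argument (the paper outsources the details of the latter to \cite{Einav1}, whereas you sketch a bulk/tail decomposition anchored in the second-moment constraint $\int v_1^2 f_N\,dv_1=1$, which is a perfectly adequate route). Your numerics are the correct ones: note that the approximation quoted in the paper as $\left\lvert \mathbb{S}^{N-1}\right\rvert=(2\pi/e)^{N/2}\cdot\frac{1+O(1/N)}{\sqrt{2\pi}N^{(N-2)/2}}$ has a sign slip in the exponent of $e$ (Stirling gives $(2\pi e)^{N/2}$), and it is with the corrected constant that one recovers $-\log\mathcal{Z}_N/N\to\frac12\log(2\pi)+\frac12$, which then combines with $\int M_{1/2}\log M_{1/2}=-\frac12\log\pi-\frac12$ to give $\frac{\log 2}{2}$ exactly as you found.
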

We will give a quick sketch of the proof, and direct the reader to \cite{Einav1} for full details.
\begin{proof}
Due to symmetry, Lemma \ref{lem: fubini type formula on the sphere} and Lemma \ref{lem: approximation lemma} we have that
\begin{equation}\label{eq: entropic estimation of tensorisation I}
\begin{gathered}
\frac{H_N(F_N)}{N}=\frac{\left\lvert \mathbb{S}^{N-2} \right\rvert}{\left\lvert \mathbb{S}^{N-1} \right\rvert}
\cdot \int_{-\sqrt{N}}^{\sqrt{N}} \frac{\left(N-v_1^2 \right)^{\frac{N-3}{2}}}{N^{\frac{N-2}{2}}}
\cdot \frac{\mathcal{Z}_{N-1}\left(f_N,\sqrt{N-v_1^2}\right)}{\mathcal{Z}_N(f_N,\sqrt{N}}\\
\cdot f_N(v_1)\log\left(f_N(v_1) \right)dv_1 - \frac{\log\left(\mathcal{Z}_N(f_N,\sqrt{N})\right)}{N}\\
=\sqrt{\frac{N}{N-1}}\int_{-\sqrt{N}}^{\sqrt{N}} \frac{e^{-\frac{\left(1-v_1^2\right)^2}{2(N-1)\Sigma_N^2}}+\lambda_{N-1}\left(N-|v_1|^2 \right)}{1+\lambda_N(N)}\cdot f_N(v_1)\log\left(f_N(v_1) \right)dv_1\\
+\frac{\log\left(\sqrt{2\pi}\Sigma_N \left\lvert \mathbb{S}^{N-1} \right\rvert N^{\frac{N}{2}} \right)-\log \left(2(1+\lambda_N(N)) \right)}{N}.
\end{gathered}
\end{equation}
Using the Generalised Dominated Convergence theorem one can show that 
\begin{equation}\label{eq: entropic estimation of tensorisation II}
\begin{gathered}
\int_{-\sqrt{N}}^{\sqrt{N}} \frac{e^{-\frac{\left(1-v_1^2\right)^2}{2(N-1)\Sigma_N^2}}+\lambda_{N-1}\left(N-|v_1|^2 \right)}{1+\lambda_N(N)}\cdot f_N(v_1)\log\left(f_N(v_1) \right)dv_1\\
\underset{N\rightarrow\infty}{\longrightarrow}\int_{\mathbb{R}}M_{\frac{1}{2}}(v_1)\log \left( M_{\frac{1}{2}}(v_1) \right)dv_1.
\end{gathered}
\end{equation}
That, along with approximation for $\left\lvert \mathbb{S}^{N-1} \right\rvert$, gives the desired result.
\end{proof}
\begin{proof}[Proof of Theorem \ref{thm: entropic fail on usual suspect}]
From Lemma \ref{lem: tensorised F_N is chaotic}  We know that $F_N$ is $M_{\frac{1}{2}}-$chaotic and from Lemma \ref{lem: computation of the entropy} we know that $\lim_{N\rightarrow\infty}\frac{H_N(F_N)}{N}=\frac{\log 2}{2}$. However,
\begin{equation}\label{eq: M_a log M_a}
\int_{\mathbb{R}}M_a(v)\log\left(M_a(v) \right)dv=-\frac{\log(2\pi a)}{2}-\frac{1}{2},
\end{equation}
and
\begin{equation}\label{eq: M_a log gamma}
\int_{\mathbb{R}}M_a(v)\log\left(\gamma(v) \right)dv=-\frac{\log(2\pi)}{2}-\frac{a}{2}.
\end{equation}
Thus,
\begin{equation}\label{eq: actual entropy of limit}
H \left( M_\frac{1}{2} | \gamma \right)=\frac{\log 2}{2}-\frac{1}{4}<\frac{\log 2}{2},
\end{equation}
concluding the proof.
\end{proof}

\section{Fourth Example: Varying Polynomials.}\label{sec: polynomials}
The last example we will provide in this paper is a family of probability densities on the sphere that is made of symmetric polynomial with varying degrees, constrained to the sphere. Surprisingly enough, we can compute the normalization function very easily in this case and we will see that the reason for this example's failure to be entropically chaotic is its 'large' entropic tails.\\
In order to emphasize the effect of varying powers in our subsequent paragraphs we will define two families of probability densities, both of similar 'flavour' but very different properties (one was mentioned in Theorem \ref{thm: entropci fail on polynomial}). \\
Let
\begin{equation}\label{eq: def of f_N,m}
f_{N,m}(v_1,\dots,v_N)= \sum_{i=1}^N |v_i|^m,
\end{equation} 
where $m>0$. Denote by $f_N=f_{N,N}$ and let $\mathfrak{Z}_{N,m},\mathfrak{Z}_N$ be the appropriate normalization functions on Kac's sphere.\\
Our main two families of interest are:
\begin{equation}\label{eq: def of 4th counter example}
\begin{gathered}
F_{N,m}(v_1,\dots,v_N)=\frac{f_{N,m}(v_1,\dots,v_N)}{\mathfrak{Z}_{N,m}}, \\
F_{N}(v_1,\dots,v_N)=\frac{f_{N}(v_1,\dots,v_N)}{\mathfrak{Z}_{N}},
\end{gathered}
\end{equation}
where $m$ is fixed in the first family. The main result of this section is the following:
\begin{theorem}\label{thm: 4th counter example}
The family of probability densities $\left\lbrace F_{N,m} \right\rbrace_{N\in\mathbb{N}}$, defined in (\ref{eq: def of 4th counter example}), is $\gamma-$entropically chaotic while the family $\left\lbrace F_{N} \right\rbrace_{N\in\mathbb{N}}$ is $M_{\frac{1}{2}}-$chaotic, but not entropically chaotic.
\end{theorem}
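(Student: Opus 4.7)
The plan is to treat both $F_{N,m}$ and $F_N$ in parallel by exploiting the mixture decomposition
$$F_{N,m}=\frac{1}{N}\sum_{i=1}^N G_{N,m,i},\qquad G_{N,m,i}(v):=\frac{N|v_i|^m}{\mathfrak{Z}_{N,m}},$$
so each family is an equal mixture of $N$ mutually permuted probability densities depending on a single coordinate. Applying Lemma \ref{lem: fubini type formula on the sphere} together with the Beta integral gives the exact formula
$$\mathfrak{Z}_{N,m}=\frac{N\,\Gamma\!\left(\frac{m+1}{2}\right)}{\sqrt{\pi}}\cdot N^{m/2}\cdot\frac{\Gamma(N/2)}{\Gamma((N+m)/2)},$$
which collapses, via Legendre's duplication formula, to the remarkably clean $\mathfrak{Z}_N=2^{1-N}N^{N/2+1}$ when $m=N$. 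The same identity on the smaller sphere $\mathbb{S}^{N-k-1}\bigl(\sqrt{N-\sum|v_i|^2}\bigr)$ will govern the partial integrals appearing in the marginals.

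For chaoticity, Corollary \ref{cor: marginals} expresses $\Pi_k(F_{N,m})$ as the $k$-marginal $\rho_N^{(k)}$ of $d\sigma^N$ (which converges pointwise to $\gamma^{\otimes k}$) multiplied by the weight ratio
$$R_{N,m,k}(v_1,\dots,v_k)=\frac{\sum_{i=1}^k|v_i|^m+(N-k)\,\overline{I}_{N-k,m}(r_k)}{\mathfrak{Z}_{N,m}},\quad r_k^2:=N-\sum_{i=1}^k|v_i|^2,$$
where $\overline{I}_{N-k,m}(r_k):=\int|v_{k+1}|^m\,d\sigma^{N-k}_{r_k}$. For fixed $m$, Stirling on the Gamma quotient yields $\overline{I}_{N-k,m}(r_k)\to\mathbb{E}_\gamma[|v|^m]$ and $\mathfrak{Z}_{N,m}\sim N\,\mathbb{E}_\gamma[|v|^m]$, so $R_{N,m,k}\to 1$ and $\Pi_k(F_{N,m})\to\gamma^{\otimes k}$. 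For $m=N$ a sharper Stirling computation (again via duplication) gives $\overline{I}_{N-k,N}(r_k)\sim r_k^N\cdot 2^{k/2+1-N}$, and the ratio becomes
$$R_{N,N,k}(v_1,\dots,v_k)\longrightarrow 2^{k/2}\prod_{i=1}^k e^{-v_i^2/2},$$
which upon multiplication by $\gamma^{\otimes k}$ is exactly $M_{1/2}^{\otimes k}$. Applying Corollary \ref{cor: marginal and chaoticity} then delivers $\gamma$- and $M_{1/2}$-chaoticity respectively.

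For the entropy I would sandwich the mixture by
$$H_N(G_{N,m,1})-\log N\;\leq\;H_N(F_{N,m})\;\leq\;H_N(G_{N,m,1}),$$
the upper bound from convexity of $x\log x$ and the lower one from the classical mixture inequality $H\bigl(\sum\pi_iG_i\bigm|d\sigma^N\bigr)\geq\sum\pi_iH(G_i|d\sigma^N)-H_{\mathrm{Sh}}(\pi)$ with uniform weights. Since $G_{N,m,1}$ depends only on $v_1$, its entropy reduces to the one-dimensional identity
$$H_N(G_{N,m,1})=\log\!\frac{N}{\mathfrak{Z}_{N,m}}+\frac{mN}{\mathfrak{Z}_{N,m}}\,I'(m),\qquad I(m):=\int|v_1|^m\,d\sigma^N,$$
with $I'(m)/I(m)=\tfrac12\psi(\tfrac{m+1}{2})+\tfrac12\log N-\tfrac12\psi(\tfrac{N+m}{2})$. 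For fixed $m$ the two $\tfrac12\log N$ contributions cancel, so $H_N(G_{N,m,1})$ stays bounded as $N\to\infty$, and the sandwich forces $H_N(F_{N,m})/N\to 0=H(\gamma|\gamma)$, giving $\gamma$-entropic chaoticity. For $m=N$ the digamma asymptotics $\psi((N{+}1)/2)=\log(N/2)+O(N^{-1})$ and $\psi(N)=\log N+O(N^{-1})$ yield $I'(N)/I(N)=\tfrac12\log(N/2)+O(N^{-1})$; inserting $\mathfrak{Z}_N=2^{1-N}N^{N/2+1}$ gives $H_N(G_{N,1})=\tfrac{N}{2}\log 2+O(\log N)$, hence $H_N(F_N)/N\to(\log 2)/2$. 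A direct computation with Gaussian integrals, exactly as in the proof of Theorem \ref{thm: entropic fail on usual suspect}, gives $H(M_{1/2}|\gamma)=(\log 2)/2-\tfrac14$, and the strict inequality $(\log 2)/2>(\log 2)/2-\tfrac14$ establishes the failure of entropic chaos for $\{F_N\}$. The main delicacy will be keeping the Stirling/digamma expansion sharp enough that the leading constant $(\log 2)/2$ is unambiguous, since the $\tfrac14$ gap is precisely the distance between $\lim H_N(F_N)/N$ and $H(M_{1/2}|\gamma)$ and any coarser asymptotics would wash out the counterexample.
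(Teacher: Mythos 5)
Your argument is correct and establishes slightly more than the paper does; the main difference lies in how the entropy is handled. The preliminary computations ($\mathfrak{Z}_{N,m}$, $\mathfrak{Z}_N$ via the Beta integral and Legendre duplication, the $k$-th marginal of $F_N$ via Corollary \ref{cor: marginals} and Stirling) are essentially identical to the paper's. But where you deviate is in the entropy estimate. The paper's proof of $\gamma$-entropic chaoticity for $F_{N,m}$ is much cruder: it simply bounds $f_{N,m}\leq N^{m/2}$ on the sphere to get $H_N(F_{N,m})\leq\frac{m\log N}{2}-\log\mathfrak{Z}_{N,m}$ and then invokes the result from \cite{CCRLV} that $H_N(F_N)/N\to 0$ already implies $\gamma$-entropic chaoticity, avoiding the need to verify $\gamma$-chaoticity of $F_{N,m}$ separately. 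Your digamma computation of $H_N(G_{N,m,1})$ is more work but gives the same conclusion and, paired with your explicit marginal check for $F_{N,m}$, avoids the external citation. For $F_N$ the paper does not compute the limit of $H_N(F_N)/N$: it derives only a lower bound, using the pointwise inequality $t^\alpha\log t\geq -\frac{1}{\alpha e}$ with the optimising choice $\alpha=\frac{2}{\log 2}$ to bound $\int|v_1|^N\log|v_1|^N\,d\sigma^N$ from below, together with the crude mixture lower bound $H_N(F_N)\geq\frac{1}{\mathfrak{Z}_N}\int|v_1|^N\log|v_1|^N\,d\sigma^N-\log\mathfrak{Z}_N$ that corresponds to only half of your sandwich. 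This yields $\liminf_{N\to\infty}\frac{H_N(F_N)}{N}\geq\frac{\log 2}{2}$, which already exceeds $H(M_{1/2}|\gamma)=\frac{\log 2}{2}-\frac14$ and so suffices. Your two-sided sandwich with $I'(m)/I(m)$ expressed through digamma functions pins down the actual limit $\lim_{N\to\infty}\frac{H_N(F_N)}{N}=\frac{\log 2}{2}$, which is a cleaner and slightly stronger statement; the price is a bit more care with the Stirling and digamma expansions, though as you correctly note the $\tfrac14$ gap is generous enough that the leading-order asymptotics are robust.
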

which will also prove Theorem \ref{thm: entropci fail on polynomial}.The proof of this theorem will involve a few steps. We start with a few computations. 
\begin{lemma}\label{lem: integration of |v|^m}
Let $m>-1$. Then 
\begin{equation}\label{eq: integration of |v|^m}
\int_{\mathbb{S}^{N-1}(r)}|v_1|^m d\sigma^N_r = \frac{r^m \cdot\Gamma\left(\frac{N}{2} \right)\cdot\Gamma\left(\frac{m+1}{2} \right)}{\sqrt{\pi}\cdot\Gamma\left(\frac{N+m}{2} \right)}.
\end{equation}
\end{lemma}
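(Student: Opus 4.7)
The plan is to reduce the surface integral to a one-variable integral via the Fubini-type formula of Lemma \ref{lem: fubini type formula on the sphere}, then recognize the remaining one-dimensional integral as a Beta function. First I would apply that lemma with $j=1$. Since the integrand $|v_1|^m$ depends only on $v_1$ and since $d\sigma$ denotes the normalized probability measure on each sphere (so that the inner integral $\int_{\mathbb{S}^{N-2}\left(\sqrt{r^2 - v_1^2}\right)} d\sigma$ evaluates simply to $1$), the identity reduces to
\begin{equation*}
\int_{\mathbb{S}^{N-1}(r)}|v_1|^m d\sigma^N_r = \frac{\left\lvert \mathbb{S}^{N-2} \right\rvert}{\left\lvert \mathbb{S}^{N-1} \right\rvert}\cdot \frac{1}{r^{N-2}}\int_{-r}^r |v_1|^m \left(r^2 - v_1^2\right)^{\frac{N-3}{2}} dv_1.
\end{equation*}

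Next I would perform the change of variables $v_1 = r\sin\theta$, $\theta\in\left(-\frac{\pi}{2},\frac{\pi}{2}\right)$, which yields $dv_1 = r\cos\theta\, d\theta$ and $(r^2-v_1^2)^{(N-3)/2} = r^{N-3}\cos^{N-3}\theta$. After exploiting the symmetry in $\theta$ the integral becomes $2r^{N-2+m}\int_0^{\pi/2}\sin^m\theta\, \cos^{N-2}\theta\, d\theta$, which by formula (\ref{eq: Beta formula}) equals $r^{N-2+m} B\left(\frac{m+1}{2},\frac{N-1}{2}\right)$. The hypothesis $m>-1$ is exactly what guarantees that this Beta integral converges at the origin.

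Finally I would collect the pieces using $B(\xi,\zeta) = \frac{\Gamma(\xi)\Gamma(\zeta)}{\Gamma(\xi+\zeta)}$ together with the standard ratio
\begin{equation*}
\frac{\left\lvert \mathbb{S}^{N-2} \right\rvert}{\left\lvert \mathbb{S}^{N-1} \right\rvert} = \frac{\Gamma\!\left(\frac{N}{2}\right)}{\sqrt{\pi}\,\Gamma\!\left(\frac{N-1}{2}\right)},
\end{equation*}
coming from $\left\lvert \mathbb{S}^{n-1} \right\rvert = 2\pi^{n/2}/\Gamma(n/2)$. The factors of $\Gamma\!\left(\frac{N-1}{2}\right)$ then cancel and the powers of $r$ collapse to $r^m$, producing exactly (\ref{eq: integration of |v|^m}). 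There is no genuine obstacle here: the computation is entirely routine, and the only point to watch is the bookkeeping between the normalization of $d\sigma$, the ratio of sphere surface areas, and the Gamma-function arguments.
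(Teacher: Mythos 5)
Your proof is correct and follows essentially the same route as the paper: apply the Fubini-type formula with $j=1$ to reduce to a one-dimensional integral, then substitute to recognize it as the trigonometric Beta integral \eqref{eq: Beta formula}, and finish with $B(\xi,\zeta)=\Gamma(\xi)\Gamma(\zeta)/\Gamma(\xi+\zeta)$ together with the surface-area ratio $\lvert\mathbb{S}^{N-2}\rvert/\lvert\mathbb{S}^{N-1}\rvert=\Gamma(N/2)/\bigl(\sqrt{\pi}\,\Gamma((N-1)/2)\bigr)$. The paper first substitutes $v_1=rx$ and then implicitly uses $x=\sin\theta$ via \eqref{eq: Beta formula}, while you substitute $v_1=r\sin\theta$ in one step; this is a cosmetic difference only.
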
 
\begin{proof}
Using Lemma \ref{lem: fubini type formula on the sphere} we find that
\begin{equation}\label{eq: computation of |v|^m I}
\begin{gathered}
\int_{\mathbb{S}^{N-1}(r)}|v_1|^m d\sigma^N_r=\frac{\left\lvert \mathbb{S}^{N-2} \right\rvert}{\left\lvert \mathbb{S}^{N-1} \right\rvert}\cdot \frac{1}{r^{N-2}}\int_{-r}^{r}|v_1|^m\left(r^2-v_1^2 \right)^{\frac{N-3}{2}}dv_1 \\
=\frac{2 r^m \Gamma\left(\frac{N}{2} \right)}{\sqrt{\pi}\Gamma\left(\frac{N-1}{2} \right)}
\int_{0}^{1}x^m\left(1-x^2 \right)^{\frac{N-3}{2}}dx,
\end{gathered}
\end{equation}
where we used the substitution $v_1=rx$ and the formula 
\begin{equation}\label{eq: connection between S^N-1 and Gamma}
\left\lvert \mathbb{S}^{N-1} \right\rvert=\frac{2\pi^{\frac{N}{2}}}{\Gamma\left(\frac{N}{2} \right)}.
\end{equation} 
Equation (\ref{eq: Beta formula}) as well the identity
\begin{equation}\label{eq: connection between beta and gamma}
B(x,y)=\frac{\Gamma(x)\Gamma(y)}{\Gamma(x+y)},
\end{equation}
simplify (\ref{eq: computation of |v|^m I}) to the desired result.
\end{proof}
\begin{corollary}\label{cor: expression for Z_N,m and Z_N}
\begin{equation}\label{eq: expression for Z_N,m}
\begin{gathered}
\mathfrak{Z}_{N,m}=\frac{N\cdot 2^{\frac{m}{2}}\cdot \Gamma \left(\frac{m+1}{2} \right)}{\sqrt{\pi}}\cdot(1+\epsilon_N),
\end{gathered}
\end{equation}
\begin{equation}\label{eq: expression for Z_N}
\begin{gathered}
\mathfrak{Z}_{N}=\frac{N^{\frac{N+2}{2}}}{2^{N-1}},
\end{gathered}
\end{equation}
where $\epsilon_N$ goes to zero as $N$ goes to infinity.
\end{corollary}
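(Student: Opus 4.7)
The plan is to write both normalization constants explicitly via Lemma \ref{lem: integration of |v|^m} and then, in the case of $\mathfrak{Z}_{N,m}$, extract the leading asymptotic from a ratio of Gamma functions. The symmetry of $f_{N,m}$ in the variables $v_1,\dots,v_N$ immediately gives
\[
\mathfrak{Z}_{N,m}=\int_{\mathbb{S}^{N-1}(\sqrt{N})}\sum_{i=1}^N|v_i|^m\,d\sigma^N=N\int_{\mathbb{S}^{N-1}(\sqrt{N})}|v_1|^m\,d\sigma^N,
\]
so applying Lemma \ref{lem: integration of |v|^m} with $r=\sqrt{N}$ yields the closed form
\[
\mathfrak{Z}_{N,m}=\frac{N^{\frac{m}{2}+1}\,\Gamma\!\left(\frac{N}{2}\right)\Gamma\!\left(\frac{m+1}{2}\right)}{\sqrt{\pi}\,\Gamma\!\left(\frac{N+m}{2}\right)}.
\]

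For the first formula, I would compare this with the target $\frac{N\cdot 2^{m/2}\Gamma((m+1)/2)}{\sqrt{\pi}}$ and note that everything reduces to showing
\[
\frac{N^{m/2}\,\Gamma\!\left(\frac{N}{2}\right)}{\Gamma\!\left(\frac{N+m}{2}\right)}\xrightarrow[N\to\infty]{}2^{m/2}.
\]
This is a standard consequence of the asymptotic $\Gamma(z+a)/\Gamma(z)=z^{a}(1+O(1/z))$ applied with $z=N/2$ and $a=m/2$: one obtains $\Gamma(N/2)/\Gamma((N+m)/2)=(N/2)^{-m/2}(1+O(1/N))$, and multiplying by $N^{m/2}$ gives $2^{m/2}(1+O(1/N))$. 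Setting $1+\epsilon_N$ to be this $(1+O(1/N))$ factor proves the first identity.

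For the second formula, I would specialize $m=N$ in the closed form above and apply the Legendre duplication formula
\[
\Gamma\!\left(\frac{N}{2}\right)\Gamma\!\left(\frac{N+1}{2}\right)=\frac{\sqrt{\pi}}{2^{N-1}}\,\Gamma(N)
\]
to simplify the ratio of Gamma functions exactly:
\[
\mathfrak{Z}_{N}=\frac{N^{\frac{N}{2}+1}\,\Gamma\!\left(\frac{N}{2}\right)\Gamma\!\left(\frac{N+1}{2}\right)}{\sqrt{\pi}\,\Gamma(N)}=\frac{N^{\frac{N+2}{2}}}{2^{N-1}}.
\]
Here, pleasantly, there is no asymptotic to control; the identity is exact because the duplication formula cancels the $\Gamma(N)$ in the denominator.

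The only place where any analysis is needed is the control of $\epsilon_N$, and that control is routine once one quotes the Gamma-ratio asymptotic; there is no genuine obstacle. If one preferred a self-contained derivation, the identity $\Gamma(z+a)/\Gamma(z)\sim z^a$ can be obtained directly from Stirling's formula.
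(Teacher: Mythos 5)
Your proposal is correct and follows essentially the same route as the paper: both apply Lemma \ref{lem: integration of |v|^m} together with symmetry to obtain the exact Gamma-function formula, then use the Gamma-ratio asymptotic $\Gamma(N/2)/\Gamma((N+m)/2)\sim (N/2)^{-m/2}$ (which the paper derives from Stirling) for the first identity, and the Legendre duplication formula for the exact second identity.
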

\begin{proof}
We start by noticing that due to symmetry and Lemma \ref{lem: integration of |v|^m} we have that
\begin{equation}\label{eq: Z_N,m computation I}
\begin{gathered}
\mathfrak{Z}_{N,m}=N\cdot \int_{\mathbb{S}^{N-1}\left(\sqrt{N}\right)}|v_1|^m d\sigma^N 
=\frac{N^{\frac{m+2}{2}} \cdot\Gamma\left(\frac{N}{2} \right)\cdot\Gamma\left(\frac{m+1}{2} \right)}{\sqrt{\pi}\cdot\Gamma\left(\frac{N+m}{2} \right)}.
\end{gathered}
\end{equation}
Next, we see that the approximation
\begin{equation}\label{eq: Gamma approximation}
\Gamma(z)=z^{z-\frac{1}{2}}\cdot e^{-z}\cdot \sqrt{2\pi}\left(1+\frac{1}{12z}+\dots \right),
\end{equation}
for large $z$, leads to
\begin{equation}\label{eq: Gamma / Gamma approximation}
\begin{gathered}
\frac{\Gamma \left(\frac{N}{2} \right)}{\Gamma \left(\frac{N+m}{2} \right)}=\frac{1+\epsilon_N}{\left( \frac{N}{2} \right)^{\frac{m}{2}}},
\end{gathered}
\end{equation}
where $\epsilon_N$ goes to zero as $N$ goes to infinity.
Combining (\ref{eq: Z_N,m computation I}) and (\ref{eq: Gamma / Gamma approximation}) yields (\ref{eq: expression for Z_N,m}).\\
Similarly, by plugging $m=N$ in (\ref{eq: integration of |v|^m}) we find that
\begin{equation}\label{eq: Z_N computation I}
\begin{gathered}
\mathfrak{Z}_{N}=N\cdot \int_{\mathbb{S}^{N-1}\left(\sqrt{N}\right)}|v_1|^N d\sigma^N 
=\frac{N^{\frac{N+2}{2}} \cdot\Gamma\left(\frac{N}{2} \right)\cdot\Gamma\left(\frac{N+1}{2} \right)}{\sqrt{\pi}\cdot\Gamma\left(N \right)}.
\end{gathered}
\end{equation}
The known formula
\begin{equation}\label{eq: Gamma times Gamma}
\Gamma \left(z \right)\cdot \Gamma \left(z+\frac{1}{2} \right)=2^{1-2z}\cdot \sqrt{\pi}\cdot \Gamma \left(2z \right),
\end{equation}
together with (\ref{eq: Z_N computation I}) yields (\ref{eq: expression for Z_N}). 
\end{proof}

We are now ready to start proving Theorem \ref{thm: 4th counter example}.
\begin{lemma}\label{lem: F_N,m is entropically chaotic}
The family of probability densities $\left\lbrace F_{N,m} \right\rbrace_{N\in\mathbb{N}}$ is  $\gamma-$entropically chaotic.
\end{lemma}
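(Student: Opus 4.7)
The proof splits into establishing $\gamma$-chaoticity of $\{F_{N,m}\}$ and then showing $\lim_{N\to\infty} H_N(F_{N,m})/N = H(\gamma\,|\,\gamma) = 0$.

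For chaoticity, I would apply Corollary \ref{cor: marginals} to obtain an explicit formula for $\Pi_k(F_{N,m})$, splitting $\sum_{i=1}^N |v_i|^m$ into the contribution from the first $k$ variables (which are constants from the point of view of the inner integral) and the contribution from the remaining $N-k$. By symmetry on the inner sphere, the latter reduces via Lemma \ref{lem: integration of |v|^m} applied on the sphere of radius $\rho_k = \sqrt{N - \sum_{i=1}^k |v_i|^2}$ to $(N-k)\,\rho_k^m\,\Gamma(\tfrac{N-k}{2})\Gamma(\tfrac{m+1}{2})/[\sqrt{\pi}\,\Gamma(\tfrac{N-k+m}{2})]$. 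Combining this with Corollary \ref{cor: expression for Z_N,m and Z_N}, the Stirling-type estimate $\Gamma(\tfrac{N}{2})/\Gamma(\tfrac{N+m}{2}) \sim (N/2)^{-m/2}$ (already used in the proof of that corollary), and the classical pointwise convergence
\begin{equation*}
\frac{|\mathbb{S}^{N-k-1}|}{|\mathbb{S}^{N-1}|}\cdot\frac{\bigl(N-\sum_{i=1}^k|v_i|^2\bigr)^{(N-k-2)/2}}{N^{(N-2)/2}} \underset{N\to\infty}{\longrightarrow} \gamma^{\otimes k}(v_1,\dots,v_k),
\end{equation*}
one checks that the first-$k$ contribution is $O(1/N)$ while the environmental contribution tends to $\gamma^{\otimes k}$. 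Corollary \ref{cor: marginal and chaoticity} then upgrades the pointwise marginal convergence to $\gamma$-chaoticity.

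For the entropy the plan is to prove the strictly stronger statement $H_N(F_{N,m}) \to 0$. The elementary inequality $\log y \le y-1$ for $y>0$, multiplied by $F_{N,m}\ge 0$, yields $F_{N,m}\log F_{N,m} \le F_{N,m}^2 - F_{N,m}$, so integrating against $d\sigma^N$ and using $\int F_{N,m} d\sigma^N = 1$,
\begin{equation*}
0 \le H_N(F_{N,m}) \le \int_{\mathbb{S}^{N-1}(\sqrt{N})} F_{N,m}^2 \, d\sigma^N - 1.
\end{equation*}
Expanding $F_{N,m}^2 = \mathfrak{Z}_{N,m}^{-2}(\sum_i|v_i|^m)^2$ and using exchangeability reduces the task to computing $\int|v_1|^{2m}\,d\sigma^N$, which Lemma \ref{lem: integration of |v|^m} delivers in closed form and which is $O(1)$, and the two-body moment $\int|v_1|^m|v_2|^m\,d\sigma^N$. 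For this second moment I would apply Lemma \ref{lem: fubini type formula on the sphere} with $j=2$, rescale $v_i = \sqrt{N}\,x_i$, pass to polar coordinates $(x_1,x_2)=(r\cos\theta, r\sin\theta)$, and evaluate the angular and radial integrals via the Beta identity (\ref{eq: Beta formula})--(\ref{eq: connection between beta and gamma}). The outcome is the closed form $\frac{\Gamma((m+1)/2)^2}{\pi}\,N^m\,\Gamma(\tfrac{N}{2})/\Gamma(\tfrac{N+2m}{2})$, whose Stirling asymptotic is exactly $a_\infty^2 + o(1)$, where $a_\infty = 2^{m/2}\Gamma(\tfrac{m+1}{2})/\sqrt{\pi}$ is the limit of $\mathfrak{Z}_{N,m}/N$ from Corollary \ref{cor: expression for Z_N,m and Z_N}. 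Dividing by $\mathfrak{Z}_{N,m}^2 \sim N^2 a_\infty^2$, the first-moment term contributes $O(1/N)$ and the two-body term tends to $1$; hence $\int F_{N,m}^2 \, d\sigma^N \to 1$, $H_N(F_{N,m}) \to 0$, and a fortiori $H_N(F_{N,m})/N \to 0$.

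The main obstacle I anticipate is the $\Gamma$-function bookkeeping for the two-body moment: one must verify that the leading-order constant there agrees exactly with $a_\infty^2$, because any discrepancy would leave $\int F_{N,m}^2\,d\sigma^N$ with a positive limit and break the variance estimate. Once this precise matching is confirmed, the argument is essentially a concentration-of-measure statement on Kac's sphere, strong enough to force the entropy itself, and not merely $H_N/N$, to vanish in the limit.
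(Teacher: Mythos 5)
Your proof is correct, but it takes a genuinely different route from the paper's, and is in fact considerably more work than necessary. The paper invokes a theorem of Carlen--Carvalho--Le Roux--Loss--Villani (\cite{CCRLV}) stating that for any symmetric family of probability densities on Kac's sphere, $\lim_{N\to\infty} H_N(F_N)/N = 0$ already implies $\gamma$-entropic chaoticity (including the chaoticity itself); this makes your entire first paragraph establishing $\gamma$-chaoticity via the marginal formula unnecessary. For the entropy bound, the paper also avoids the second-moment computation: it uses the pointwise estimate $f_{N,m}\le N^{m/2}$ on the sphere (valid for $m\ge 2$; for $0<m<2$ the correct bound is $f_{N,m}\le N$, which works just as well) to get
\begin{equation*}
0\le H_N(F_{N,m}) = \frac{1}{\mathfrak{Z}_{N,m}}\int f_{N,m}\log f_{N,m}\,d\sigma^N - \log\mathfrak{Z}_{N,m} \le \frac{m\log N}{2}-\log\mathfrak{Z}_{N,m},
\end{equation*}
which is $O(\log N)$ by Corollary \ref{cor: expression for Z_N,m and Z_N}, hence $o(N)$. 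Your variance argument $0\le H_N(F_{N,m})\le \int F_{N,m}^2\,d\sigma^N - 1$, with the expansion by exchangeability into $N\int|v_1|^{2m}$ and $N(N-1)\int|v_1|^m|v_2|^m$ and the polar-coordinate evaluation of the two-body moment, does check out: the leading constants do match $a_\infty^2$, so $\int F_{N,m}^2\to 1$. What your route buys is the strictly stronger statement $H_N(F_{N,m})\to 0$ (not merely $H_N/N\to 0$) and independence from the CCRLV criterion; what it costs is a delicate $\Gamma$-function bookkeeping (which you correctly flag) and the extra labor of the separate chaoticity verification.
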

\begin{proof}
In \cite{CCRLV} the authors showed that if $\left\lbrace F_N \right\rbrace_{N\in\mathbb{N}}$ is a symmetric family of probability densities such that $\lim_{N\rightarrow\infty}\frac{H_N(F_N)}{N}=0$ then the family is $\gamma-$entropically chaotic (they have actually proved something stronger than that). Thus, we only need to show that
\begin{equation}\label{eq: entropci chaoticity of F_N,m}
\lim_{N\rightarrow\infty}\frac{H_N(F_{N,m})}{N}=0.
\end{equation}
Indeed, from (\ref{eq: expression for Z_N,m}) we see that $\lim_{N\rightarrow\infty}\frac{\log\left(\mathfrak{Z}_{N,m} \right)}{N}=0$ and since on Kac's sphere $f_{N,m}(v_1,\dots,v_k) \leq N^{\frac{m}{2}}$ we find that
\begin{equation}\label{eq: entropy estimation of F_N,m}
\begin{gathered}
0\leq H_N(F_{N,m})\\
=\frac{1}{\mathfrak{Z}_{N,m}}\int_{\mathbb{S}^{N-1}\left(\sqrt{N} \right)}f_{N,m}(v_1,\dots,v_k)\log\left(f_{N,m}(v_1,\dots,v_k) \right)d\sigma^N  - \log \mathfrak{Z}_{N,m} \\
\leq \frac{m\log N}{2}-\log \mathfrak{Z}_{N,m},
\end{gathered}
\end{equation}
which shows (\ref{eq: entropci chaoticity of F_N,m}).
\end{proof}

We now turn our attention to the family $\left\lbrace F_N \right\rbrace_{N\in\mathbb{N}}$.
\begin{lemma}\label{lem: F_N is chaotic}
The family of probability densities $\left\lbrace F_{N} \right\rbrace_{N\in\mathbb{N}}$ is $M_{\frac{1}{2}}-$chaotic.
\end{lemma}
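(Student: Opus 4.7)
By Corollary \ref{cor: marginal and chaoticity}, it suffices to show that for every fixed $k \in \mathbb{N}$ the marginal $\Pi_k(F_N)$ converges pointwise to $M_{1/2}^{\otimes k}$. Using Corollary \ref{cor: marginals} together with the fact that $f_N(v) = \sum_{i=1}^N |v_i|^N$ splits under the smaller-sphere integration, I would write
\begin{equation*}
\int_{\mathbb{S}^{N-k-1}\left(\sqrt{s}\right)} f_N \, d\sigma^{N-k}_{\sqrt{s}} = \sum_{i=1}^k |v_i|^N + (N-k)\int_{\mathbb{S}^{N-k-1}\left(\sqrt{s}\right)} |v_{k+1}|^N \, d\sigma^{N-k}_{\sqrt{s}},
\end{equation*}
where $s = N - \sum_{i=1}^k |v_i|^2$. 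Here I exploit symmetry (for the second sum the integrand depends only on one variable of the remaining $N-k$) and the fact that the first sum is constant on the smaller sphere (where $d\sigma$ is normalized to a probability measure).

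The contribution of the ``frozen'' piece $\sum_{i=1}^k |v_i|^N$ should be negligible: using $\mathfrak{Z}_N = N^{(N+2)/2}/2^{N-1}$ from Corollary \ref{cor: expression for Z_N,m and Z_N}, a direct computation shows that $|v_i|^N/\mathfrak{Z}_N$ is of order $N^{-1}(2|v_i|^2/N)^{N/2}$, which decays super-exponentially for any fixed $v_i$, and this decay dominates the polynomial prefactors $|\mathbb{S}^{N-k-1}|/|\mathbb{S}^{N-1}|$ and $s^{(N-k-2)/2}/N^{(N-2)/2}$. So only the integrated piece matters in the limit.

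For the integrated piece I would apply Lemma \ref{lem: integration of |v|^m} with $r = \sqrt{s}$ and $m = N$, yielding
\begin{equation*}
(N-k)\int_{\mathbb{S}^{N-k-1}\left(\sqrt{s}\right)} |v_{k+1}|^N\,d\sigma^{N-k}_{\sqrt{s}} = (N-k)\,\frac{s^{N/2}\,\Gamma\!\left(\tfrac{N-k}{2}\right)\Gamma\!\left(\tfrac{N+1}{2}\right)}{\sqrt{\pi}\,\Gamma\!\left(N-\tfrac{k}{2}\right)}.
\end{equation*}
Plugging this back into the marginal formula from Corollary \ref{cor: marginals} produces, after substituting $\mathfrak{Z}_N = N^{(N+2)/2}/2^{N-1}$ and $|\mathbb{S}^{N-k-1}|/|\mathbb{S}^{N-1}| = \pi^{-k/2}\Gamma(N/2)/\Gamma((N-k)/2)$, an expression in which the $\Gamma\!\left(\tfrac{N-k}{2}\right)$ factor cancels, leaving $\Gamma(N/2)\Gamma((N+1)/2)/\Gamma(N-k/2)$. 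I would then use the duplication identity (\ref{eq: Gamma times Gamma}) to rewrite $\Gamma(N/2)\Gamma((N+1)/2) = 2^{1-N}\sqrt{\pi}\,\Gamma(N)$, and Stirling to obtain $\Gamma(N)/\Gamma(N-k/2) \sim N^{k/2}$.

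Assembling the powers of $2$, $N$, and $\pi$, all explicit prefactors collapse to $\pi^{-k/2}$, and the remaining $N$-dependence comes solely from $s^{N-k/2-1}$. Writing $s^{N-k/2-1} = N^{N-k/2-1}\left(1 - \tfrac{\sum v_i^2}{N}\right)^{N-k/2-1}$ and using $(1-t/N)^N \to e^{-t}$, the marginal converges pointwise to
\begin{equation*}
\pi^{-k/2}\exp\!\left(-\sum_{i=1}^k v_i^2\right) = \prod_{i=1}^k \frac{e^{-v_i^2}}{\sqrt{\pi}} = M_{1/2}^{\otimes k}(v_1,\dots,v_k).
\end{equation*}
The conceptual steps are straightforward; the main obstacle is the combinatorial bookkeeping of powers of $N$, $2$, and $\pi$ along with the Gamma ratios, and the proof essentially reduces to checking that everything cancels correctly. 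Once this is done, Corollary \ref{cor: marginal and chaoticity} immediately yields $M_{1/2}$-chaoticity.
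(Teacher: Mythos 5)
Your proposal is correct and follows the same route as the paper: compute the $k$-th marginal via Corollary \ref{cor: marginals}, split the smaller-sphere integral into the ``frozen'' $\sum_{i\le k}|v_i|^N$ piece and the integrated tail, apply Lemma \ref{lem: integration of |v|^m}, and simplify the Gamma ratios with the duplication formula and Stirling to obtain pointwise convergence to $M_{1/2}^{\otimes k}$, concluding with Corollary \ref{cor: marginal and chaoticity}. The only cosmetic difference is that you discard the super-exponentially small frozen term early, whereas the paper carries both terms through to the final expression before taking the pointwise limit; the bookkeeping and cancellations you describe all check out.
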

\begin{proof}
We start with Corollary \ref{cor: marginals} and the $k-$th marginal:
\begin{equation}\label{eq: kth marginal of F_N I}
\begin{gathered}
\Pi_k \left(F_{N} \right)(v_1,\dots,v_k)=\frac{\left\lvert \mathbb{S}^{N-k-1} \right\rvert}{\left\lvert \mathbb{S}^{N-1} \right\rvert} \cdot \frac{\left(N-\sum_{i=1}^k |v_i|^2 \right)_{+}^{\frac{N-k-2}{2}}}{N^{\frac{N-2}{2}}\cdot \mathfrak{Z}_{N}}\\
\cdot \left(\sum_{i=1}^k |v_i|^N +(N-k)\int_{\mathbb{S}^{N-k-1}\left(\sqrt{N-\sum_{i=1}^k |v_i|^2} \right)}|v_{k+1}|^N d\sigma^{N-k}_{\sqrt{N-\sum_{i=1}^N |v_i|^2}}\right).
\end{gathered}
\end{equation}
Next, we use Lemma \ref{lem: integration of |v|^m} to find that
\begin{equation}\label{eq: F_N is chaotic computation I}
\begin{gathered}
\int_{\mathbb{S}^{N-k-1}\left(\sqrt{N-\sum_{i=1}^k |v_i|^2} \right)}|v_{k+1}|^N d\sigma^{N-k}_{\sqrt{N-\sum_{i=1}^k |v_i|^2}}\\
=\frac{N^{\frac{N}{2}} \cdot\Gamma\left(\frac{N-k}{2} \right)\cdot\Gamma\left(\frac{N+1}{2} \right)}{\sqrt{\pi}\cdot\Gamma\left(N-\frac{k}{2} \right)}\left(1-\frac{\sum_{i=1}^k |v_i|^2}{N} \right)^{\frac{N}{2}}.
\end{gathered}
\end{equation}
From expression (\ref{eq: Gamma approximation}) we see that
\begin{equation}\label{eq: lots of Gammas}
\begin{gathered}
\Gamma \left(\frac{N-k}{2} \right)=\left(\frac{N}{2} \right)^{\frac{N-k-1}{2}}\left(1-\frac{k}{N} \right)^{\frac{N-k-1}{2}}\cdot e^{\frac{-N+k}{2}}\cdot \sqrt{2\pi}(1+\epsilon_N),\\
\Gamma \left(\frac{N+1}{2} \right)=\left(\frac{N}{2} \right)^{\frac{N}{2}}\left(1+\frac{1}{N} \right)^{\frac{N}{2}}\cdot e^{\frac{-N-1}{2}}\cdot \sqrt{2\pi}(1+\epsilon_N),\\
\Gamma \left(N-\frac{k}{2} \right)=N^{N-\frac{k+1}{2}}\cdot\left(1-\frac{k}{2N} \right)^{N-\frac{k+1}{2}}\cdot e^{-N+\frac{k}{2}}\cdot \sqrt{2\pi}(1+\epsilon_N),
\end{gathered}
\end{equation}
leading to
\begin{equation}\label{eq: F_N is chaotic computation II}
\begin{gathered}
\int_{\mathbb{S}^{N-k-1}\left(\sqrt{N-\sum_{i=1}^k |v_i|^2} \right)}|v_{k+1}|^N d\sigma^{N-k}_{\sqrt{N-\sum_{i=1}^k |v_i|^2}}\\
=\frac{N^{\frac{N}{2}} \cdot 2^{\frac{k}{2}}}{2^{N-1}}\left(1-\frac{\sum_{i=1}^k |v_i|^2}{N} \right)^{\frac{N}{2}}(1+\epsilon_N).
\end{gathered}
\end{equation}
Combining (\ref{eq: F_N is chaotic computation I}), , (\ref{eq: connection between S^N-1 and Gamma}), (\ref{eq: Gamma / Gamma approximation}), (\ref{eq: expression for Z_N}) and (\ref{eq: F_N is chaotic computation II}) we find that
\begin{equation}\label{eq: kth marginal of F_N II}
\begin{gathered}
\Pi_k \left(F_N,m \right)(v_1,\dots,v_k)=\frac{N^{\frac{k}{2}}}{\pi^{\frac{k}{2}}\cdot 2^{\frac{k}{2}}} \cdot \frac{2^{N-1}\left(N-\sum_{i=1}^k |v_i|^2 \right)_{+}^{\frac{N-k-2}{2}}}{N^{\frac{N-2}{2}}\cdot N^{\frac{N+2}{2}} }\\
\cdot \left(\sum_{i=1}^k |v_i|^N +\frac{\left(1-\frac{k}{N} \right)\cdot N^{\frac{N+2}{2}}\cdot 2^{\frac{k}{2}}}{2^{N-1}}\cdot\left(1-\frac{\sum_{i=1}^k |v_i|^2}{N} \right)^{\frac{N}{2}}\cdot(1+\epsilon_N)\right)\\
=\left( 1-\frac{\sum_{i=1}^k |v_i|^2}{N} \right)_{+}^{\frac{N-k-2}{2}}\left( \frac{2^{N-1}\cdot\sum_{i=1}^k |v_i|^N}{\left(2\pi\right)^{\frac{k}{2}}\cdot N^{\frac{N+2}{2}}}+\left(1-\frac{k}{N}\right)\cdot \frac{\left(1-\frac{\sum_{i=1}^k |v_i|^2}{N} \right)^{\frac{N}{2}}}{\pi ^{\frac{k}{2}}} \right)\\
\cdot (1+\epsilon_N).
\end{gathered}
\end{equation}
Clearly, we have that
\begin{equation}\label{eq: kth marginal of F_N III}
\begin{gathered}
\Pi_k \left(F_N \right)(v_1,\dots,v_k)\underset{N\rightarrow\infty}{\longrightarrow}M_{\frac{1}{2}}^{\otimes k}(v_1,\dots,v_k)
\end{gathered}
\end{equation}
pointwise, which finishes the proof due to Corollary \ref{cor: marginal and chaoticity}.
\end{proof}
Before we show the final stage in the proof of Theorem \ref{thm: 4th counter example} we require the following technical lemma:
\begin{lemma}\label{lem: high entropic tails}
\begin{equation}\label{eq: high entropic tails}
\int_{\mathbb{S}^{N-1}\left( \sqrt{N}\right)}|v_1|^N \log \left( |v_1|^N \right)d\sigma^N
\geq \frac{\mathfrak{Z}_N\cdot \log N}{2}-\frac{\mathfrak{Z}_N \cdot \log 2}{2}\cdot(1+\epsilon_N),
\end{equation}
where $\epsilon_N$ goes to zero as $N$ goes to infinity.
\end{lemma}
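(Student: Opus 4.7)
My plan is to compute $\int |v_1|^N \log(|v_1|^N)\, d\sigma^N$ almost exactly by differentiating a parameter in the formula of Lemma \ref{lem: integration of |v|^m}, and then extract the claimed bound from a digamma asymptotic. First I would introduce
\[g(m) := \int_{\mathbb{S}^{N-1}(\sqrt{N})} |v_1|^m\, d\sigma^N = \frac{N^{m/2}\,\Gamma(N/2)\,\Gamma\!\left(\tfrac{m+1}{2}\right)}{\sqrt{\pi}\,\Gamma\!\left(\tfrac{N+m}{2}\right)},\]
whose closed form is Lemma \ref{lem: integration of |v|^m}. Since $|v_1|\leq\sqrt{N}$ on Kac's sphere, $g$ is smooth in $m>0$ and differentiation under the integral yields $g'(m) = \int |v_1|^m \log|v_1|\, d\sigma^N$. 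By symmetry (as in (\ref{eq: Z_N computation I})), $\mathfrak{Z}_N = N\,g(N)$, so the target integral equals $N g'(N)$.

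Second, I would differentiate $\log g$ using the digamma function $\psi = \Gamma'/\Gamma$, which yields
\[\frac{g'(m)}{g(m)} = \frac{\log N}{2} + \frac{1}{2}\,\psi\!\left(\tfrac{m+1}{2}\right) - \frac{1}{2}\,\psi\!\left(\tfrac{N+m}{2}\right).\]
Evaluating at $m=N$ gives the exact identity
\[\int_{\mathbb{S}^{N-1}(\sqrt{N})} |v_1|^N \log|v_1|^N\, d\sigma^N = \mathfrak{Z}_N\!\left[\frac{\log N}{2} + \frac{1}{2}\,\psi\!\left(\tfrac{N+1}{2}\right) - \frac{1}{2}\,\psi(N)\right],\]
so the inequality in the lemma reduces to showing $\psi((N+1)/2) - \psi(N) \geq -\log 2\,(1+\epsilon_N)$ for some $\epsilon_N\to 0$.

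Third, the classical digamma asymptotic $\psi(z) = \log z - \tfrac{1}{2z} + O(1/z^2)$ gives directly $\psi((N+1)/2) - \psi(N) = -\log 2 + O(1/N)$; alternatively, the Gauss duplication identity $\psi(N) = \tfrac12\psi(N/2) + \tfrac12\psi((N+1)/2) + \log 2$ combined with $\psi(N) - \psi(N/2) \to \log 2$ produces the same expansion. The only genuinely delicate point is tracking the sign of this remainder so that it is absorbable into the prescribed $\epsilon_N$; since the exact formula differs from the claimed bound only by a term of order $\mathfrak{Z}_N/N$, one may simply take $\epsilon_N = O(1/N)$, and the stated inequality in fact becomes an asymptotic equality. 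Differentiation under the integral is routine, as the integrand is uniformly bounded on $\mathbb{S}^{N-1}(\sqrt{N})$ for $m$ in any compact subset of $(0,\infty)$.
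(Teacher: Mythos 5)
Your proof is correct and takes a genuinely different route from the paper's. You differentiate the closed-form expression in Lemma \ref{lem: integration of |v|^m} with respect to the exponent $m$, obtaining the \emph{exact} identity
\[
\int_{\mathbb{S}^{N-1}(\sqrt{N})} |v_1|^N \log\bigl(|v_1|^N\bigr)\,d\sigma^N
= \mathfrak{Z}_N\left[\frac{\log N}{2} + \frac{1}{2}\psi\!\left(\frac{N+1}{2}\right) - \frac{1}{2}\psi(N)\right],
\]
and then conclude via the digamma asymptotic (or the duplication formula). The paper instead decomposes the integral, isolates the $\log|x|$ part, applies the pointwise inequality $t^{\alpha}\log t \geq -1/(\alpha e)$ for fixed $\alpha>0$, reduces to a Beta-function integral, and optimizes the choice $\alpha = 2/\log 2$. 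Your approach buys an exact formula rather than a one-sided estimate; in fact, since $\psi$ is strictly increasing, the duplication formula shows $\tfrac12\psi((N+1)/2) - \tfrac12\psi(N) = -\tfrac{\log 2}{2} + \tfrac14\bigl(\psi((N+1)/2) - \psi(N/2)\bigr) > -\tfrac{\log 2}{2}$, so the lemma actually holds with $\epsilon_N = 0$, which is stronger than claimed. The paper's approach is more elementary (no special-function asymptotics beyond Stirling) but gives only the one-sided bound, which is all that is needed. Both are valid; yours is arguably cleaner and generalizes more readily to other powers of $|v_1|$, while the paper's has the virtue of staying within the toolkit already set up (Fubini, Beta/Gamma identities, Stirling).

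One small point worth making explicit: you should state that $\psi$ is increasing (equivalently, that $\Gamma$ is log-convex) to pin down the sign of the remainder, rather than relying only on the $O(1/N)$ estimate, since the lemma is a lower bound and the sign of the correction determines whether $\epsilon_N$ can be taken nonnegative. As you noted, the sign works out favorably, so this is a refinement, not a gap.
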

\begin{proof}
Using equation (\ref{eq: fubini on the sphere}) we see that
\begin{equation}\label{eq: entropic tails computation I}
\begin{gathered}
\int_{\mathbb{S}^{N-1}\left( \sqrt{N}\right)}|v_1|^N \log \left( |v_1|^N \right)d\sigma^N
=N\cdot \frac{\left\lvert \mathbb{S}^{N-2} \right\rvert}{\left\lvert \mathbb{S}^{N-1} \right\rvert}\\
\cdot \frac{1}{N^{\frac{N-2}{2}}}\int_{-\sqrt{N}}^{\sqrt{N}}|v_1|^N \left(N-v_1^2 \right)^{\frac{N-3}{2}}\log|v_1| dv_1= N\cdot \frac{\left\lvert \mathbb{S}^{N-2} \right\rvert}{\left\lvert \mathbb{S}^{N-1} \right\rvert}\\
\cdot N^{\frac{N}{2}}\int_{-1}^{1}|x|^N \left(1-x^2 \right)^{\frac{N-3}{2}}\log\left(\sqrt{N}|x| \right)dx,
\end{gathered}
\end{equation}
where we used the change of variables $v_1=\sqrt{N}x$. Similarly one can show that
\begin{equation}\label{eq: entropic tails computation II}
\begin{gathered}
\frac{\mathfrak{Z}_N}{N}=\int_{\mathbb{S}^{N-1}\left( \sqrt{N}\right)}|v_1|^Nd\sigma^N
=\frac{\left\lvert \mathbb{S}^{N-2} \right\rvert}{\left\lvert \mathbb{S}^{N-1} \right\rvert}\cdot N^{\frac{N}{2}}
\int_{-1}^{1}|x|^N \left(1-x^2 \right)^{\frac{N-3}{2}}dx,
\end{gathered}
\end{equation}
and thus
\begin{equation}\label{eq: entropic tails computation III}
\begin{gathered}
\int_{\mathbb{S}^{N-1}\left( \sqrt{N}\right)}|v_1|^N \log \left( |v_1|^N \right)d\sigma^N
=\frac{\mathfrak{Z}_N\cdot \log N}{2}\\
+N\cdot \frac{\left\lvert \mathbb{S}^{N-2} \right\rvert}{\left\lvert \mathbb{S}^{N-1} \right\rvert}\cdot N^{\frac{N}{2}}\int_{-1}^{1}|x|^N \left(1-x^2 \right)^{\frac{N-3}{2}}\log|x|dx.
\end{gathered}
\end{equation}
Using the simple inequality 
\begin{equation}\label{eq: logarithm ineq}
t^{\alpha}\log t \geq -\frac{1}{\alpha\cdot e },
\end{equation}
for $t>0$ and fixed $\alpha>0$, we find that
\begin{equation}\label{eq: entropic tails computation IV}
\begin{gathered}
\frac{\left\lvert \mathbb{S}^{N-2} \right\rvert}{\left\lvert \mathbb{S}^{N-1} \right\rvert}\int_{-1}^{1}|x|^N \left(1-x^2 \right)^{\frac{N-3}{2}}\log|x|dx\\
\geq -\frac{\Gamma\left(\frac{N}{2} \right)}{\sqrt{\pi}\Gamma\left(\frac{N-1}{2} \right)}\cdot\frac{1}{\alpha \cdot e}\int_{-1}^{1}|x|^{N-\alpha} \left(1-x^2 \right)^{\frac{N-3}{2}}dx\\
=-\frac{\Gamma\left(\frac{N}{2} \right)}{\sqrt{\pi}\Gamma\left(\frac{N-1}{2} \right)}\cdot\frac{B\left(\frac{N-\alpha+1}{2}, \frac{N-1}{2} \right)}{\alpha \cdot e}
=-\frac{\Gamma\left(\frac{N-\alpha+1}{2} \right)\Gamma\left(\frac{N}{2} \right)}{\sqrt{\pi}\cdot\alpha \cdot e\cdot \Gamma\left(N-\frac{\alpha}{2} \right)}
\end{gathered}
\end{equation}
Similar to equations (\ref{eq: lots of Gammas}) we can easily show that
\begin{equation}\label{eq: entropic tails computation V}
\begin{gathered}
\frac{\left\lvert \mathbb{S}^{N-2} \right\rvert}{\left\lvert \mathbb{S}^{N-1} \right\rvert}\int_{-1}^{1}|x|^N \left(1-x^2 \right)^{\frac{N-3}{2}}\log|x|dx \geq -\frac{2^{\frac{\alpha}{2}}(1+\epsilon_N)}{2^{N-1}\cdot\alpha \cdot e }.
\end{gathered}
\end{equation}
Chosen to optimize (\ref{eq: entropic tails computation V}) we pick $\alpha=\frac{2}{\log 2}$ and conclude that
\begin{equation}\label{eq: entropic tails computation VI}
\begin{gathered}
\frac{\left\lvert \mathbb{S}^{N-2} \right\rvert}{\left\lvert \mathbb{S}^{N-1} \right\rvert}\cdot N^{\frac{N+2}{2}}\int_{-1}^{1}|x|^N \left(1-x^2 \right)^{\frac{N-3}{2}}\log|x|dx
\geq -\frac{\mathfrak{Z}_N \cdot \log 2}{2}\cdot (1+\epsilon_N).
\end{gathered}
\end{equation}
The desired result follows from (\ref{eq: entropic tails computation III}) and (\ref{eq: entropic tails computation VI}).
\end{proof}
Finally, we have the following:
\begin{lemma}\label{lem: F_N is not entropically chaotic}
The family of probability densities $\left\lbrace F_{N} \right\rbrace_{N\in\mathbb{N}}$ is not entropically chaotic.
\end{lemma}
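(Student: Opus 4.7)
The plan is to show that $\liminf_{N\to\infty} H_N(F_N)/N$ strictly exceeds $H(M_{1/2}\,|\,\gamma) = \frac{\log 2}{2} - \frac{1}{4}$, since by Lemma \ref{lem: F_N is chaotic} the family is already $M_{1/2}$-chaotic. Start from
\[
H_N(F_N) \;=\; \frac{1}{\mathfrak{Z}_N}\int_{\mathbb{S}^{N-1}(\sqrt{N})} f_N(v)\log f_N(v)\, d\sigma^N \;-\; \log \mathfrak{Z}_N,
\]
and bound the integral from below. The elementary inequality $f_N(v) = \sum_{i=1}^N |v_i|^N \ge |v_j|^N$ gives $\log f_N(v) \ge N\log|v_j|$ for each $j$ (with the convention $0\cdot(-\infty) = 0$ on the measure-zero set where some $v_j$ vanishes, which is harmless since $|v_j|^N \log f_N$ is integrable).

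Multiplying by $|v_j|^N \ge 0$, summing over $j$, and using the symmetry of $d\sigma^N$, I get
\[
\int f_N \log f_N \, d\sigma^N \;=\; \sum_{j=1}^N \int |v_j|^N \log f_N \, d\sigma^N \;\ge\; N \int |v_1|^N \log\bigl(|v_1|^N\bigr)\, d\sigma^N.
\]
Now Lemma \ref{lem: high entropic tails} provides the crucial lower bound
\[
\int |v_1|^N \log\bigl(|v_1|^N\bigr)\, d\sigma^N \;\ge\; \frac{\mathfrak{Z}_N \log N}{2} - \frac{\mathfrak{Z}_N \log 2}{2}(1+\epsilon_N).
\]
Substituting and dividing by $\mathfrak{Z}_N$ (which is explicit from Corollary \ref{cor: expression for Z_N,m and Z_N}, giving $\log \mathfrak{Z}_N = \frac{N+2}{2}\log N - (N-1)\log 2$) gives
\[
H_N(F_N) \;\ge\; \frac{N \log N}{2} - \frac{N\log 2}{2}(1+\epsilon_N) - \frac{N+2}{2}\log N + (N-1)\log 2,
\]
so after cancellation
\[
\frac{H_N(F_N)}{N} \;\ge\; \frac{\log 2}{2}(1-\epsilon_N) - \frac{\log N}{N} - \frac{\log 2}{N}.
\]
Taking $\liminf$ yields $\liminf_{N\to\infty} H_N(F_N)/N \ge \frac{\log 2}{2}$. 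Since $H(M_{1/2}\,|\,\gamma) = \frac{\log 2}{2} - \frac{1}{4}$ by the computation in (\ref{eq: actual entropy of limit}), the entropic chaoticity identity $\lim H_N(F_N)/N = H(M_{1/2}\,|\,\gamma)$ fails, establishing the lemma.

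The main obstacle is really just the lower bound on $\int |v_1|^N \log(|v_1|^N) d\sigma^N$, which has already been handled in Lemma \ref{lem: high entropic tails}; that lemma captures the fact that the mass of $|v_1|^N d\sigma^N$ concentrates near $|v_1|\approx \sqrt{N}$, producing a logarithmic contribution of order $N \log N$ which, after subtracting $\log \mathfrak{Z}_N$, leaves an excess of $\frac{\log 2}{2}$ per particle rather than the tensorised value $\frac{\log 2}{2}-\frac{1}{4}$. The extra $\frac{1}{4}$ is the entropic mass hiding in the tail $|v_1|^N$, which is precisely what the name \emph{high entropic tails} refers to.
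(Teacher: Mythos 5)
Your proof is correct and follows essentially the same route as the paper's: both use symmetry together with the monotonicity of the logarithm (your $\log f_N\ge N\log|v_j|$ step is the same bound, applied before rather than after symmetrising) to reduce to $\int |v_1|^N\log(|v_1|^N)\,d\sigma^N$, then invoke Lemma \ref{lem: high entropic tails} and the explicit $\log\mathfrak{Z}_N$ from Corollary \ref{cor: expression for Z_N,m and Z_N} to obtain $\liminf H_N(F_N)/N\ge\frac{\log 2}{2}>H(M_{1/2}\,|\,\gamma)$. The arithmetic checks out.
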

\begin{proof}
We saw that $\left\lbrace F_{N} \right\rbrace_{N\in\mathbb{N}}$ is $M_{\frac{1}{2}}-$chaotic so we only need to show that
\begin{equation}\label{eq: F_N is not entropic}
\lim_{N\rightarrow\infty}\frac{H_N(F_N)}{N}\not= H\left(M_{\frac{1}{2}} | \gamma \right).
\end{equation}
Indeed, using symmetry, the monotonicity of the logarithm, equations (\ref{eq: expression for Z_N}) and (\ref{eq: high entropic tails}) we find that
\begin{equation}\label{eq: F_N entropy estimation I}
\begin{gathered}
\frac{H_N(F_N)}{N}=\frac{1}{N\cdot \mathfrak{Z}_N}\int_{\mathbb{S}^{N-1}\left( \sqrt{N} \right)}
\left(\sum_{i=1}^N |v_i|^N \right)\log\left(\sum_{i=1}^N |v_i|^N \right)d\sigma^N \\
-\frac{\log \mathfrak{Z}_N}{N}\geq\frac{1}{\mathfrak{Z}_N}\int_{\mathbb{S}^{N-1}\left( \sqrt{N} \right)}
|v_1|^N\log\left(|v_1|^N \right)d\sigma^N \\
-\frac{(N+2)\log N}{2N}+\frac{(N-1)\log 2}{N} \\
\geq \frac{\log N}{2}-\frac{\log 2}{2}\cdot(1+\epsilon_N)-\frac{(N+2)\log N}{2N}+\frac{(N-1)\log 2}{N}.
\end{gathered}
\end{equation}
Thus
\begin{equation}\label{eq: F_N entropy estimation II}
\liminf_{N\rightarrow\infty}\frac{H_N(F_N)}{N}\geq \frac{\log 2}{2},
\end{equation}
and since $H\left(M_{\frac{1}{2}} | \gamma \right)=\frac{\log 2}{2}-\frac{1}{4}$ our proof is complete.
\end{proof}
\begin{remark}\label{rem: high entropic tails}
Equation (\ref{eq: F_N entropy estimation I}) is exactly why we say that the above example has 'high entropic tails'. The estimation provided in it shows that the rescaled $N-$particle entropy is too high, due to varying power of the polynomial.
\end{remark}
\begin{proof}[Proof of Theorem \ref{thm: 4th counter example}]
This follows immediately from Lemma \ref{lem: F_N,m is entropically chaotic}, \ref{lem: F_N is chaotic} and \ref{lem: F_N is not entropically chaotic}.
\end{proof}
\section{Final Remarks.}\label{sec: final remarks}
While we hope this paper provided a bit of insight into the sensitive nature of entropic chaoticity, there are still many interesting questions on the subject. We present here a few remarks and questions that arose while working on this paper.\\
\begin{itemize}
\item In the examples given in Sections \ref{sec: escaping tensorisation} and \ref{sec: polynomials} we found that both families of probability densities were $M_{\frac{1}{2}}-$chaotic. Since on Kac's sphere we have that
\begin{equation}\label{eq: second moment of the marginal}
1=\frac{1}{N}\int_{\mathbb{S}^{N-1}\left(\sqrt{N} \right)}\left(\sum_{i=1}^N |v_i|^2 \right)F_N(v_1,\dots,v_N)d\sigma^N=\int_{\mathbb{R}}|v_1|^2 \Pi_1\left(F_N\right)(v_1)dv_1,
\end{equation}
and $\int_{\mathbb{R}} |v|^2 M_{\frac{1}{2}}(v)dv=\frac{1}{2}$ something was lost in the limit. This brings the following questions to mind:\\
\textbf{Question:} If a family of probability densities on the sphere, $\left\lbrace F_N \right\rbrace_{N\in\mathbb{N}}$, is $f-$chaotic with $\int_{\mathbb{R}}|v|^2 f(v)dv<1$, can it be entropically chaotic?\\
We believe the answer is negative.\\
\item In light of the above question, one might try and change the dependence in $N$ of the polynomial power in Section \ref{sec: polynomials} to one that will allow convergence without loss of energy. An attempt to pick a power $\alpha_N$ such that $\lim_{N\rightarrow\infty}\frac{\alpha_N}{N}=0$, will not be helpful as it will lead to entropic chaoticity with $\gamma$ as a marginal limit. It seems that $N$ is exactly the power where things break abruptly.\\
\item One can try and replace the definition of entropic chapticity in the case where the limit measure $\mu$ has probability density $f$ with something that might seem more natural. In that case, we define $F_N$ as in (\ref{eq: usual suspect}) (when it makes sense) and say that $\mu_N$ is entropically chaotic if 
\begin{equation}\label{eq: alternative def}
\lim_{N\rightarrow\infty}\frac{H\left(\mu_N | F_N \right)}{N}=0,
\end{equation}
i.e. the rescaled 'distance' between the measure and the intuitive restricted tensorisation of the limit function goes to zero. When $f$ is nice enough (satisfying the conditions of Theorem \ref{thm: usual suspects are good} and a bit more), one can show that the new definition is equivalent to the one we presented here (see \cite{CCRLV,Carr}), however the new definition might be able to deal with infinities more easily and might be less delicate to changes.\\
\textbf{Question:} Are the definitions always equivalent? If not, when and how do they differ?\\
We'd like to point out that in our computable examples the limit function was nice enough to warrant the equivalence of the definitions.
\end{itemize}
The idea of varying functions in accordance to $N$ is the key idea behind many of our constructions and we believe that it is the main way to destroy 'good' properties, or to get horrible decay rates. We believe that such phenomena will not happen if the core function will remain fixed, something that has more of a physical intuition to it, and we're looking forward to follow any advances made on the matter.

\appendix
\section{The Stereographic Projection.}\label{app: stereographic}
The stereographic projection is a way to map $\mathbb{R}^n\cup \{\infty \}$ conformally on $\mathbb{S}^{n}(R)$. The idea is simple: given a point $(x_1,\dots,x_n)\in\mathbb{R}^n$ we can consider it to be a point in $\mathbb{R}^{n+1}$, lying on the hyperplane $x_{n+1}=0$. Connecting it via a straight line to the south pole of $\mathbb{S}^{n+1}(R)$ and intersecting that line with the sphere is the desired map $S(x_1,\dots,x_n)$.\\
In what follows we will find a formula for the stereographic map as well as express the surface element of $\mathbb{S}^{n+1}(R)$ with respect to it. \\
The line connecting the point $(x_1,\dots,x_n,0)$ to the south pole $(0,\dots,0,-R)$ is given by
\begin{equation}\label{eq: line para}
\begin{gathered}
y_i(t)=x_i t \quad i=1,\dots,n.\\
y_{n+1}=-R+Rt. 
\end{gathered}
\end{equation}
Plugging it into the equation of the sphere yields
\begin{equation}\label{eq: intersection I}
\left(\sum_{i=1}^n x_i^2\right)t^2+R^2(1-t)^2=R^2,
\end{equation}
or
\begin{equation}\label{eq: intersection II}
\left(|x|^2+R^2 \right) t^2 -2R^2t=0,
\end{equation}
leading to 
\begin{equation}\label{eq: intersection III}
t=\frac{2R^2}{R^2+|x|^2}.
\end{equation}
Thus
\begin{equation}\label{app: stereographic}
S(x_1,\dots,x_n)=\left(\frac{2R^2 x_1}{R^2+|x|^2},\dots,\frac{2R^2 x_n}{R^2+|x|^2},R\cdot \frac{R^2-|x|^2}{R^2+|x|^2} \right).
\end{equation}
Equation (\ref{app: stereographic}) allows us to find $S^{-1}$ as well. Denoting the variables on $\mathbb{S}^{n+1}(R)$ by $(v_1,\dots,v_{n+1})$ we find that 
\begin{equation}\label{eq: finding inverse I}
v_{n+1}=R\cdot \frac{R^2-|x|^2}{R^2+|x|^2},
\end{equation}
and as such
\begin{equation}\label{app: norm of x^2}
|x|^2=R^2\cdot \frac{R-v_{n+1}}{R+v_{n+1}}.
\end{equation}
Plugging it back into (\ref{app: stereographic}) we find that 
\begin{equation}\label{eq: finding inverse II}
x_i=\frac{\left(R^2+|x|^2\right)v_i}{2R^2}=\frac{Rv_i}{R+v_{n+1}},
\end{equation}
and thus
\begin{equation}\label{app:stereographic inverse}
S^{-1}\left(v_1,\dots,v_{n+1} \right)=\left(\frac{Rv_1}{R+v_{n+1}},\dots,\frac{Rv_n}{R+v_{n+1}} \right).
\end{equation}
In order to express the surface element of the sphere with the $x_i$ coordinates we notice that if $s=S(x)$ and $t=S(y)$ then
\begin{equation}\label{eq: expressing the surface element I}
\begin{gathered}
\sum_{i=1}^{n+1}(s_i-t_i)^2=4R^4\sum_{i=1}^n \left(\frac{x_i}{\left(R^2+|x|^2 \right)}-\frac{y_i}{\left(R^2+|y|^2 \right)}\right)^2
\\+R^2\left(\frac{R^2-|x|^2}{\left(R^2+|x|^2 \right)}-\frac{R^2-|y|^2}{\left(R^2+|y|^2 \right)}\right)^2\\
= \frac{4R^4}{\left(R^2+|x|^2 \right)^2\left(R^2+|y|^2 \right)^2}\Bigg(\left(R^2+|y|^2 \right)^2|x|^2-2\left(R^2+|y|^2 \right)\left(R^2+|x|^2 \right)x\circ y\\
 +\left(R^2+|x|^2 \right)^2|y|^2\Bigg) 
+\frac{R^2\left(2R^2(|y|^2-|x|^2) \right)^2}{\left(R^2+|x|^2 \right)^2\left(R^2+|y|^2 \right)^2}\\
=\frac{4R^4}{\left(R^2+|x|^2 \right)^2\left(R^2+|y|^2 \right)^2}
\Bigg( R^4|x|^2+2R^2|x|^2|y|^2+|y|^4|x|^2\\
 -2\left(R^2+|y|^2 \right)\left(R^2+|x|^2 \right)x\circ y\\
+ R^4|y|^2+2R^2|y|^2|x|^2+|x|^4|y|^2+R^2|y|^4-2R^2|x|^2|y|^2+R^2|x|^4 \Bigg).
\end{gathered}
\end{equation}
Since
\begin{equation}\label{eq: expressing the surface element II}
\left(R^2+|x|^2 \right)\left(R^2+|y|^2 \right)=R^4+R^2|x|^2+R^2|y|^2+|x|^2|y|^2
\end{equation}
we have that 
\begin{equation}\label{eq: computing the differential on the sphere}
\begin{gathered}
\sum_{i=1}^{n+1}(s_i-t_i)^2=\frac{4R^4}{\left(R^2+|x|^2 \right)^2\left(R^2+|y|^2 \right)^2}
\Bigg(\left(R^2+|x|^2 \right)\left(R^2+|y|^2 \right)|x|^2 \\
+\left(R^2+|x|^2 \right)\left(R^2+|y|^2 \right)|y|^2-2\left(R^2+|y|^2 \right)\left(R^2+|x|^2 \right)x\circ y \Bigg)\\
=\frac{4R^4|x-y|^2}{\left(R^2+|x|^2 \right)\left(R^2+|y|^2 \right)},
\end{gathered}
\end{equation}
from which we conclude that the metric on the sphere is given by
\begin{equation}\label{app: metric on the sphere}
ds_R=\left(\frac{2R^2}{R^2+|x|^2} \right)^n dx_1 \dots dx_n.
\end{equation}

\end{document}